\DeclareRobustCommand\widecheck[1]{{\mathpalette\@widecheck{#1}}}
\def\@widecheck#1#2{%
	\setbox\z@\hbox{\m@th$#1#2$}%
	\setbox\tw@\hbox{\m@th$#1%
		\widehat{%
			\vrule\@width\z@\@height\ht\z@
			\vrule\@height\z@\@width\wd\z@}$}%
	\dp\tw@-\ht\z@
	\@tempdima\ht\z@ \advance\@tempdima2\ht\tw@ \divide\@tempdima\thr@@
	\setbox\tw@\hbox{%
		\raise\@tempdima\hbox{\scalebox{1}[-1]{\lower\@tempdima\box
				\tw@}}}%
	{\ooalign{\box\tw@ \cr \box\z@}}}
\newcommand*{\Lcorner}{%
	\mathchoice%
	{\mathrel{\makebox[7pt][c]{\rule{.4pt}{7.5pt}\rule{5pt}{.4pt}}}}%
	{\mathrel{\makebox[7pt][c]{\rule{.4pt}{7.5pt}\rule{5pt}{.4pt}}}}%
	{\mathrel{\makebox[5.5pt][c]{\rule{.4pt}{5.25pt}\rule{3.5pt}{.4pt}}}}%
	{\mathrel{\makebox[4pt][c]{\rule{.4pt}{3.75pt}\rule{2.5pt}{.4pt}}}}%
}
\newcommand{\abs}[1]{\left|#1\right|}
\newcommand\1{\mathds{1}}
\newcommand\B{{\rm B}}
\newcommand{\ball}{\mathrm{B}}
\newcommand{\bv}{\mathrm{BV}}
\newcommand\BD{\mathrm{BD}}
\newcommand{\bd}{\mathrm{BD}}
\newcommand{\LD}{\mathrm {LD}}
\newcommand{\ld}{\mathrm{LD}}
\newcommand{\locc}{\mathrm{loc}}
\newcommand{\lebe}{\mathrm{L}}
\newcommand\C{{\rm C}}
\newcommand\dist{{\rm dist}}
\newcommand{\dif}{\mathrm{d}}
\newcommand{\E}{{\mathrm{E}}}
\newcommand{\Di}{\mathscr{D}}
\newcommand\del\partial
\newcommand\eps\varepsilon
\newcommand\g\gamma
\newcommand\G\Gamma
\newcommand{\hold}{\mathrm{C}}
\newcommand{\Hd}{\mathscr{H}}
\renewcommand\l\lambda
\renewcommand{\L}{\mathrm{L}}
\newcommand{\trace}{\mathrm{tr}}
\newcommand{\Lip}{\operatorname{Lip}}
\newcommand\loc{{\rm loc}}
\newcommand\sym{{\rm sym}}
\newcommand{\dista}{\mathrm{dist}}
\newcommand\N{\mathds{N}}
\newcommand\qq\qquad
\newcommand\R{\mathds{R}}
\newcommand\spt{{\rm spt}}
\newcommand\vp\varphi
\newcommand\W{{\rm W}}
\newcommand{\xint}[3]{{\setbox0=\hbox{$#1{#2#3}{\int}$}
		\vcenter{\hbox{$#2#3$}}\kern-.5\wd0}}
\newcommand{\rsym}{\mathds{R}_{\operatorname{sym}}^{n\times n}}
\newcommand{\sobo}{\W}
\newcommand{\sg}{\varepsilon}
\newcommand{\dashint}{\fint}
\newcommand{\diam}{\mathrm{diam}}
\DeclareMathOperator{\Div}{div}
\newcommand{\dx}[1]{ \,\mathrm{d}#1}
\numberwithin{equation}{section}
\newtheorem{theorem}{Theorem}[section]
\newtheorem{corollary}[theorem]{Corollary}
\newtheorem{lemma}[theorem]{Lemma}
\newtheorem{proposition}[theorem]{Proposition}
\newtheorem{definition}[theorem]{Definition}
\theoremstyle{remark}
\newtheorem{remark}[theorem]{Remark}
\newcommand{\setword}[2]{%
  \phantomsection
  #1\def\@currentlabel{\unexpanded{#1}}\label{#2}%
}
\begin{document}

\title{Gradient integrability for bounded $\BD$-minimizers}
\author[L. Beck]{Lisa Beck}
\author[F. Eitler]{Ferdinand Eitler}
\address{Lisa Beck \& Ferdinand Eitler: Institut f\"ur Mathematik, Universit\"at Augsburg, Universit\"atsstr.\ 12a, 86159 Augsburg, Germany.}
\email{lisa.beck@math.uni-augsburg.de}\email{ferdinand.eitler@math.uni-augsburg.de}
\author[F. Gmeineder]{Franz Gmeineder}
\address{Franz Gmeineder: Fachbereich Mathematik und Statistik, Universit\"at Konstanz, Universit\"atsstr.\ 10, 78464 Konstanz, Germany.}
\email{franz.gmeineder@uni-konstanz.de}
\subjclass{35B65, 35J60, 35J93, 49J45}
\keywords{Functions of bounded deformation, functionals of linear growth, functionals of $(p,q)$-growth, Sobolev regularity.}

\maketitle

\date{\today}

\begin{abstract}
We establish that locally bounded relaxed minimizers of degenerate elliptic symmetric gradient functionals on $\bd(\Omega)$ have weak  gradients in $\lebe_{\locc}^{1}(\Omega;\R^{n\times n})$.  This is achieved for the sharp ellipticity range that is presently known to yield $\sobo_{\locc}^{1,1}$-regularity in the full gradient case on $\bv(\Omega;\R^{n})$. As a consequence, we also obtain the first Sobolev regularity results for minimizers of the area-type functional on $\bd(\Omega)$.
\end{abstract}
\setcounter{tocdepth}{1}
\tableofcontents

\section{Introduction}

\subsection{Aim and scope}
Let $\Omega\subset\R^n$ with $n\geq 2$ be an open and bounded set with Lipschitz boundary~$\partial\Omega$. In this paper, we deal with the regularity of relaxed minimizers of functionals
\begin{align}\label{eq:functionalmain}
F[u;\Omega]\coloneqq \int_{\Omega}f(\varepsilon(u))\dx{x},\qquad u\colon\Omega\to\R^{n},
\end{align}
where $\varepsilon(u) \coloneqq \frac{1}{2}(\nabla u + \nabla u^{\top})$ denotes the symmetric part of the gradient of the map~$u$ and $f\colon\R_{\sym}^{n\times n}\to\R$ is an integrand to be specified further below. Functionals of the form~\eqref{eq:functionalmain} are key to the study of the elastic or plastic behaviour of solids or fluids. In such theories, these functionals  are used to model related energies in terms of the underlying displacement or velocity fields, respectively, while particular choices of the integrands~$f$ allow to model different aspects of the  material or fluid; see, e.g., \cite{FuchsSeregin,Lubliner}. 

Plasticity effects are usually accounted for by use of \emph{linear growth} functionals~\cite{FuchsSeregin}, and these constitute the starting point for the present paper. Specifically, we suppose that there exist constants $\gamma,\Gamma>0$ with
\begin{align}\label{eq:lingrowth1}
\gamma\abs{z} \leq f(z)\leq \Gamma(1+\abs{z})\quad\mbox{for all } z\in \R^{n\times n}_\sym.
\end{align}
This condition ensures that $F[-;\Omega]$ is finite on the space 
\begin{equation*}
\ld(\Omega)\coloneqq \{u\in\lebe^{1}(\Omega;\R^{n})\colon\; \sg(u)\in\lebe^{1}(\Omega;\rsym)\}, 
\end{equation*}
which is the symmetric gradient-variant of $\sobo^{1,1}(\Omega;\R^{n})$ and endowed with the canonical norm $\|u\|_{\ld(\Omega)} \coloneqq \|u\|_{\lebe^{1}(\Omega)}+\|\sg(u)\|_{\lebe^{1}(\Omega;\R^{n})}$. Moreover, minimizing sequences with prescribed boundary values $u_0 \in \ld(\Omega)$ remain bounded in $\ld(\Omega)$, but by non-reflexivity of $\ld(\Omega)$, they are not necessarily weakly relatively compact in $\ld(\Omega)$. As a consequence, integrands $f$ satisfying~\eqref{eq:lingrowth1} necessitate the relaxation of $F[-;\Omega]$ to the space $\bd(\Omega)$ of functions of bounded deformation. Different from the superlinear growth case $1<p<\infty$, there is no constant $c>0$ such that $\|Du\|_{\lebe^{1}(\Omega;\R^{n\times n})}\leq c\|\sg(u)\|_{\lebe^{1}(\Omega;\rsym)}$ holds for all $u\in\hold_{c}^{\infty}(\Omega;\R^{n})$. This key obstruction, also known as {Ornstein's} \emph{Non-Inequality}, implies that $\sobo^{1,1}(\Omega;\R^{n})\subsetneq\ld(\Omega)$ and  $\bv(\Omega;\R^{n})\subsetneq\bd(\Omega)$ (see, e.g.,  \cite{Ornstein,ContiFaracoMaggi,KirchheimKristensen}). Most notably, the full distributional gradients of $\ld$- or $\bd$-maps do not need to belong to $\lebe^1$ or to the space of $\R^{n\times n}$-valued Radon measures. Hence, results for linear growth functionals involving the full gradient cannot be applied in the current setting. 

This motivates the quest for conditions on $f$ such that relaxed minimizers genuinely belong to $\bv_{\locc}(\Omega;\R^{n})$ or even $\sobo_{\locc}^{1,1}(\Omega;\R^{n})$, finally striving for a parallel regularity theory to what is presently available for linear growth functionals involving full gradients on $\bv(\Omega;\R^{n})$; see \cite{BildhauerLecNotes,SchmidtHabil} for overviews. This especially concerns the critical degenerate elliptic regime. To reach the latter, even in the full gradient case on  $\bv(\Omega;\R^{n})$ or for the closely related functionals of $(p,q)$-growth, it is then customary to impose additional boundedness hypotheses on (generalized) minimizers; see \textsc{Mingione}'s overview~\cite{MinDarkSide}. Such boundedness assumptions are natural from the perspective of applications, for instance the displacement~$u$ in~\eqref{eq:functionalmain} being confined to a bounded spatial region. Still, aiming to parallel the full gradient theory and thereby taking the local boundedness as a standing assumption, none of the previously developed strategies and techniques allows to cover the entire critical ellipticity range for functionals~\eqref{eq:functionalmain} subject to~\eqref{eq:lingrowth1}. 
 
In this paper, we close this gap and thereby complete the picture of Sobolev regularity for linear growth functionals on $\bd$. To state our main result, Theorem~\ref{thm:main} below, we give the precise set-up and its natural context first. 

\subsection{Relaxations and generalised minimizers}
In all of what follows, we suppose that the Dirichlet datum satisfies  $u_{0}\in\ld(\Omega)$. Defining $\ld_{0}(\Omega)$ as the $\|\cdot\|_{\ld(\Omega)}$-closure of $\hold_{c}^{\infty}(\Omega;\R^{n})$, we then consider the variational principle 
\begin{align}\label{eq:varprin1}
\mbox{to minimise}\quad F[u;\Omega]\coloneqq \int_\Omega f(\eps(u))\dx{x}\quad\mbox{over}\quad u\in\Di_{u_0} \coloneqq u_{0}+\ld_{0}(\Omega),
\end{align}
with a convex integrand $f\colon\R_{\sym}^{n\times n}\to\R$ satisfying the linear growth condition~\eqref{eq:lingrowth1}. The lack of weak compactness in $\ld(\Omega)$ implies that solutions of the minimization problem~\eqref{eq:varprin1} do not need to exist in general,  and a relaxation to the space $\bd(\Omega)$ is required. To provide a unifying framework, we put for a subset $U\subseteq\Omega$ with Lipschitz boundary $\partial U$ and maps $u,v\in\bd(U)$
\begin{align}\label{eq:WeakStarRelaxation}
\overline{F}_{v}[u;U] \coloneqq \int_{U} f(\mathscr{E}u)\,\dif x + \int_{U} f^\infty\left(\!\frac{\mathrm{dE}^su}{\mathrm{d}\!\abs{\mathrm{E}^su}}\!\right)\dif{\abs{\mathrm{E}^su}} + \int_{\partial U} f^\infty(\mathrm{tr}_{\partial U}(v-u)\odot\nu_{\del U})\dx{\Hd^{n-1}}.
\end{align}
where the behaviour of the integrand at infinity is captured by the \emph{recession function} 
\begin{equation}\label{eq:recfunc}
 f^\infty(z) \coloneqq \lim_{s \to \infty} \frac{1}{s}f(sz) \qquad \text{for all } z\in \R^{n\times n}_{\sym}. 
\end{equation}
By convexity and the linear growth condition~\eqref{eq:lingrowth1}, $f^\infty$ is well-defined, $1$-homogeneous, finite and convex. We refer the reader to Section~\ref{sec:funspac} for a detailed description of the single constituents in~\eqref{eq:WeakStarRelaxation}. The notion of (local) minimality to be employed in the sequel then is defined as follows:
\begin{definition}[$\bd$- and local $\bd$-minimizers]\label{def:min} 
\leavevmode
\begin{enumerate} 
\item Given $u_{0}\in\bd(\Omega)$, a map $u\in\bd(\Omega)$ is called a \emph{$\bd$-minimizer (of $F$) subject to the Dirichlet datum $u_{0}$} if $\overline{F}_{u_{0}}[u;\Omega]\leq \overline{F}_{u_{0}}[w;\Omega]$ holds for all $w\in\bd(\Omega)$. 
\item A map $u\in\bd_{\locc}(\Omega)$ is called a \emph{local $\bd$-minimizer (of $F$)} if $\overline{F}_{u}[u;U]\leq \overline{F}_{u}[w;U]$ holds for all subsets $U\Subset\Omega$ with Lipschitz boundary $\partial U$ and all $w\in\bd(U)$. 
\end{enumerate} 
\end{definition}

As can be directly inferred from the definition, any $\bd$-minimizer is a local $\bd$-minimizer. If the integrand $f\in\hold(\rsym)$ is convex with~\eqref{eq:lingrowth1}, we have the consistency relation
\begin{align}\label{eq:consistency}
\inf_{u\in\mathscr{D}_{u_{0}}}F[u;\Omega]=\min_{u\in\bd(\Omega)}\overline{F}_{u_{0}}[u;\Omega].
\end{align}
Most importantly, by weak*-compactness principles on $\bd(\Omega)$, $\bd$-minimizers do exist indeed, see Corollary~\ref{cor:reshetnyak} and Remark~\ref{rem:consistency_proof} below.  Still, by Ornstein's Non-Inequality, the full gradients of generic  $\bd_{(\locc)}$-maps need not be Radon measures, and  it is thus a key theme to identify ellipticity conditions on $f$ and lower order hypotheses on $\bd$-minimizers that not only ensure the existence of \emph{full gradients} as Radon measures but also their higher integrability. Here, a suitable scale is that of $\mu$-ellipticity, which we describe in detail next.

\subsection{$\mu$-ellipticity, $(p,q)$-growth and previous results}
In the context of convex $\hold^{2}$-integrands of linear growth, the condition of $\mu$-ellipticity quantifies the degeneration of the second order derivatives of~$f$. Specifically, given $1<\mu<\infty$, a $\C^2$-integrand $f\colon \R^{n\times n}_\sym\to \R$ is called \emph{$\mu$-elliptic} provided that there exist constants $0<\lambda\leq \Lambda <\infty$ with 
\begin{align}\label{eq:MuEllipticity}
\lambda \frac{\abs{\xi}^2}{(1+\abs{z}^2)^{\frac{\mu}{2}}}\leq \langle \nabla^2 f(z)\xi, \xi\rangle \leq \Lambda \frac{\abs{\xi}^2}{(1+\abs{z}^2)^{\frac{1}{2}}}\quad\mbox{for all}\quad z, \xi\in \R^{n\times n}_\sym.
\end{align}
Here, the condition $\mu>1$ on the lower growth exponent is essential, as otherwise the integrands $f$ are not of linear growth; see Remark~\ref{rem:LlogL1} for more detail.  Because of $\mu>1$, it is clear that linear growth integrands with~\eqref{eq:MuEllipticity} feature a different growth behaviour from above and below \emph{on the level of second derivatives}. In this sense, they resemble integrands of $(p,q)$-growth. For the latter, improved gradient regularity estimates for minimizers are well-known to require a dimensional balance between $p$ and $q$, whereas dimension independent thresholds necessitate additional hypotheses. Starting with the foundational work of \textsc{Marcellini} \cite{Marcellini1,Marcellini2} in the scalar case, \textsc{Mingione} et al.  \cite{EspositoLeonettiMingione1999A,EspLeoMin,MinDarkSide} in the vectorial case  and subsequent contributions, typical bounds in this situation read as 
\begin{align}\label{eq:pq}
\frac{q}{p}<1+\frac{2}{n}\;\;\;\text{(unconstrained case)}\;\;\;\text{and}\;\;\;q\leq p+2\;\;\;\text{($\lebe_{\locc}^{\infty}$-constrained case)},
\end{align}
which can also be expressed as bounds $\tfrac{2p}{n}$ and $2$ on the difference $(q-2) - (p-2)$ of the growth exponents for $\nabla^2 f$.
Drawing this analogy for the relaxed functionals~\eqref{eq:WeakStarRelaxation} in view of~\eqref{eq:MuEllipticity}, one thus aims for higher local  gradient integrability for $\mu<1+\frac{2}{n}$ in the unconstrained case, whereas $\mu\leq 3$ comes up as the natural threshold in the $\lebe_{\locc}^{\infty}$-constrained situation; here, $(q-2) - (p-2)$ corresponds to $-1 - (-\mu)$, while we have $p=1$ in the bound $\frac{2p}{n}$. In the full gradient case and so for relaxed minimizers on $\bv$, these bounds have been achieved in \cite{BildhauerMu,BeckSchmidt}; also see~\cite{BildhauerLecNotes}. This threshold is of intrinsic theoretical relevance, as it allows to include the $3$-elliptic \emph{area integrand}
\begin{align*}
f(z)=\sqrt{1+|z|^{2}},\qquad z\in\rsym.
\end{align*}
In the \emph{symmetric} gradient case, however, only the unconstrained case has been tackled successfully so far by the third author~\cite{Gmeineder} for $\mu<1+\frac{2}{n}$. Previous results \cite{Gmeineder2016,Gmeineder1} made use of fractional methods \`{a} la \textsc{Mingione} \cite{MingioneBound,MingioneFract}, but only gave the restricted range $\mu<1+\frac{1}{n}$. Still, the desired ellipticity threshold $\mu\leq 3$ in the $\lebe_{\locc}^{\infty}$-constrained situation has remained open. As will be discussed in the following section, the sharp range $1<\mu\leq 3$ for higher gradient integrability to be addressed here comes with more fundamental obstructions than in previous contributions. Yet, dealing with this borderline case, we thereby obtain a fully parallel Sobolev regularity theory to what is presently known for linear growth functionals on $\bv$. 

\section{Main result and strategy of proof}\label{sec:mainresult}
We now proceed to display our main result and give a discussion of the underlying obstructions afterwards. More precisely, we have the following theorem: 

\begin{theorem}[Main theorem]\label{thm:main}
Let $\Omega\subset\R^{n}$ be open and bounded, and let $f\in\hold^{2}(\rsym)$ be a variational integrand which satisfies~\eqref{eq:lingrowth1} and~\eqref{eq:MuEllipticity} with $1<\mu\leq 3$. Then any local $\bd$-minimizer $u\in\bd_{\locc}(\Omega)\cap\lebe_{\locc}^{\infty}(\Omega;\R^{n})$ of~$F$ \emph{is of class} $\sobo_{\locc}^{1,1}(\Omega;\R^{n})$. More specifically, for every subset $U\Subset\Omega$ there exists a constant $c=c(n,\Omega,\lambda,\Lambda,\gamma,\Gamma, \|u\|_{\L^{\infty}(U;\R^{n})})>0$ such that whenever $\ball_{2r}(x_{0})\Subset U$, we have 
\begin{align}\label{eq:mainestimate}
\int_{\ball_{r}(x_{0})}|\nabla u|\log(1+|\nabla u|^{2})\dif x \leq c\Big(1+\frac{1}{r^{2}}\Big)\Big(r^{n} + r^{n-2} +|\E u|(\overline{\ball}_{2r}(x_{0}))\Big).
\end{align}
\end{theorem}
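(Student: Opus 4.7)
The plan is to follow the \emph{vanishing-viscosity} regularization scheme familiar from the full-gradient theory on $\bv(\Omega;\R^{n})$, adapted to the symmetric-gradient setting. On a ball $\ball_{2r}(x_{0})\Subset U$, I would replace $f$ by the superlinearly growing integrands
\begin{equation*}
f_{k}(z) \coloneqq f(z) + \tfrac{1}{k}(1+|z|^{2})^{q/2},\qquad q>1 \text{ fixed,}
\end{equation*}
and consider the unique minimizer $u_{k}$ of $\int f_{k}(\varepsilon(\,\cdot\,))\,\dif x$ over a mollification of $u+\sobo_{0}^{1,q}(\ball_{2r};\R^{n})$. By standard superlinear theory one has $u_{k}\in\sobo^{1,q}(\ball_{2r};\R^{n})$ and, via difference quotients applied to the Euler--Lagrange system, $\varepsilon(u_{k})\in\sobo^{1,2}_{\locc}$. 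A separate truncation argument, comparing $u_{k}$ to an $\lebe^{\infty}$-truncation of $u$ inside the functional, should propagate a uniform bound $\|u_{k}\|_{\lebe^{\infty}(\ball_{2r};\R^{n})}\lesssim \|u\|_{\lebe^{\infty}(U;\R^{n})}$ independent of $k$.

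The core a priori estimate would then be a weighted second-order Caccioppoli inequality for $\varepsilon(u_{k})$. Differentiating the Euler--Lagrange system $\int \nabla f_{k}(\varepsilon(u_{k})):\varepsilon(\varphi)\,\dif x = 0$ in direction $\partial_{s}$, I would test with a variant of
\begin{equation*}
\varphi \coloneqq \eta^{2}\,\partial_{s}(u_{k}-\ell)\,g\bigl(|\varepsilon(u_{k})|^{2}\bigr),
\end{equation*}
where $\eta$ is a standard cutoff on $\ball_{2r}$, $\ell$ is a rigid deformation with $\varepsilon(\ell)=0$, and $g$ is a weight tailored to the degeneracy. The endpoint $\mu=3$ forces the logarithmic choice $g(t)=\log(1+t)$, which is precisely what generates the $\lebe\log\lebe$-quantity on the left of~\eqref{eq:mainestimate}. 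Crucially, the $\lebe^{\infty}$-bound from the previous step is activated by replacing one factor of $\partial_{s}u_{k}$ by $\partial_{s}(u_{k}-\ell)$ and integrating by parts onto the weighted expression in $\varepsilon(u_{k})$; this is the mechanism that pushes the admissible range from $\mu<1+\tfrac{2}{n}$ (the unconstrained case treated in~\cite{Gmeineder}) all the way to $\mu\leq 3$.

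The hard step, and the real novelty at $\mu=3$, is converting this information on $\varepsilon(u_{k})$ into an $\lebe^{1}$-bound on the full gradient $\nabla u_{k}$. Ornstein's non-inequality rules out a direct $\lebe^{1}$-passage, so the bridge must be built at a reflexive level $p>1$ via the $\lebe^{p}$-Korn inequality applied to $u_{k}-\ell$, and only then combined with the weighted Caccioppoli inequality by interpolation against the degenerate weight $(1+|\varepsilon(u_{k})|^{2})^{-\mu/2}$ supplied by~\eqref{eq:MuEllipticity}. The extra logarithmic factor produced in the Caccioppoli step is precisely the amount of integrability slack needed to close this interpolation exactly at the endpoint. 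The fractional-differentiability technique of~\cite{Gmeineder2016,Gmeineder1} stops short of the sharp range, and even the direct symmetric-gradient Sobolev mechanism of~\cite{Gmeineder} runs out of slack here; the correct strategy must rely on the $\lebe^{\infty}$-norm of $u_{k}$ as the sole remaining degree of freedom.

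Once a uniform $\lebe\log\lebe$-bound on $(\nabla u_{k})$ is in place, the de la Vall\'ee Poussin criterion yields equi-integrability of $(\nabla u_{k})$ in $\lebe^{1}$. Combined with lower semicontinuity, the relaxation formula~\eqref{eq:WeakStarRelaxation}, and the consistency relation~\eqref{eq:consistency}, this identifies the weak limit of $(u_{k})$ with the given local $\bd$-minimizer $u$ and forces $\mathrm{D}u$ to be absolutely continuous with an $\lebe^{1}$-density, i.e.\ $u\in\sobo^{1,1}_{\locc}(\Omega;\R^{n})$. The quantitative bound~\eqref{eq:mainestimate} then follows by Fatou's lemma.
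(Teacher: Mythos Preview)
Your plan has two genuine gaps, both of which the paper identifies as fundamental obstructions and builds its entire machinery around.

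\textbf{Uniform $\lebe^{\infty}$-bounds via truncation do not survive the passage to symmetric gradients.} You write that ``a separate truncation argument, comparing $u_{k}$ to an $\lebe^{\infty}$-truncation of $u$ inside the functional, should propagate a uniform bound''. In the full-gradient theory this works because the radial truncation $\widehat{u}$ satisfies $|\nabla\widehat{u}|\leq|\nabla u|$ pointwise, so the truncation is an admissible competitor with lower energy. For the symmetric gradient no such pointwise bound $|\varepsilon(\widehat{u})|\leq c|\varepsilon(u)|$ is available (the paper records this explicitly in Remark~\ref{rem:cutoff}), and there is no maximum principle for vectorial systems depending on $\varepsilon(u)$ either. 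Without the $\lebe^{\infty}$-bound on $u_{k}$ your integration-by-parts step, which is where the boundedness is supposed to be ``activated'', collapses. The paper's remedy is to \emph{enforce} the bound by adding an $\lebe^{\infty}$-penalisation $\int g(w/M)\,\dif x$ to the functional and then to prove, via a delicate geometric argument exploiting H\"older continuity of the $\sobo^{1,n+1}$-stabilised approximants, that the resulting Ekeland sequence stays a quantified distance below the blow-up threshold (Proposition~\ref{lem:ELperutrbed}).

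\textbf{Non-uniqueness prevents identification of the limit.} Even granting your a priori estimates, plain viscosity approximation produces a sequence $(u_{k})$ whose weak limit is \emph{some} $\bd$-minimizer, not necessarily the given one. Since the recession function is $1$-homogeneous, the relaxed functional $\overline{F}_{u_{0}}$ is not strictly convex on $\bd$, and local $\bd$-minimizers can be highly non-unique; the consistency relation~\eqref{eq:consistency} does not single out $u$. So your scheme would at best show that one locally bounded minimizer is $\sobo^{1,1}_{\locc}$, leaving open the existence of another $u\in(\bd_{\locc}\cap\lebe^{\infty}_{\locc})\setminus\sobo^{1,1}_{\locc}$. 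This is exactly why the paper abandons straightforward viscosity approximation in favour of Ekeland's variational principle: starting the Ekeland iteration from an area-strict approximation of the \emph{specific} minimizer $u$ guarantees that the almost-minimizers $v_{j}$ converge back to $u$ itself in $\sobo^{-2,1}$.

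A secondary point: your test function $\eta^{2}\partial_{s}(u_{k}-\ell)\,g(|\varepsilon(u_{k})|^{2})$ still carries the full derivative $\partial_{s}u_{k}$, and subtracting a rigid deformation does not symmetrise it. The paper's second-order estimate (Theorem~\ref{thm:uniformSecondOrder}) instead tests with $\varrho^{4}\partial_{k}v_{j}$ and then performs a rather intricate algebraic rearrangement exploiting the symmetry of $\sigma_{j}=\nabla f_{j}(\varepsilon(v_{j}))$ and a double integration by parts to eliminate every occurrence of the full gradient; the final higher-integrability step then tests with $\varrho^{4}\log^{2}(1+|\varepsilon(v_{j})|^{2})\,v_{j}$ and passes to $\nabla u$ only at the very end via Cianchi's Orlicz--Korn inequality, at the cost of one logarithmic power.
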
 

This result will be established in Section~\ref{sec:main}. It can also be interpreted as a critical borderline case of the $\lebe_{\locc}^{p}$-constrained case for general $p < \infty$, which will be addressed in the follow-up paper~\cite{BeckEitlerGmeineder} based on the methods developed here. In order to motivate the particular set-up and strategy of the proof, we now pause to explain the key difficulties and novelties in detail. 

\textbf{\setword{(a)}{introa1} Singular measures, lack of maximum principles and non-uniqueness.} To get access to the ellipticity assumption~\eqref{eq:MuEllipticity} imposed on $f$, it is natural to employ a version of the difference quotient method. Since the distributional symmetric gradients of local $\bd$-minimizers $u\in\bd_{\locc}(\Omega)\cap\lebe_{\locc}^{\infty}(\Omega;\R^{n})$ a priori only belong to $\mathrm{RM}(\Omega;\rsym)$, the direct use of difference quotients is however  difficult to be implemented. In this context, a routine device is to consider plain viscosity approximations, so minimizers $v_{j}$ of the stabilized functionals
\begin{align}\label{eq:perturbed1}
F_{j}[v;\Omega]:=F[v;\Omega] + \frac{1}{j}\int_{\Omega}|\sg(v)|^{2}\dif x \qquad \text{for } j\in\N, 
\end{align}
on suitable Dirichlet classes. Different from the full gradient case, see \cite{BildhauerLecNotes,BeckSchmidt}, we cannot utilise tools such as maximum principles or truncation arguments to infer that the sequence $(v_{j})_{j \in \N}$ remains {locally uniformly bounded in~$\Omega$}. As explained in \ref{introd1} below, this however turns out to be crucial in the proof of Theorem~\ref{thm:main}, and reveals a conceptual difference to previous contributions \cite{Gmeineder1,Gmeineder} in the unconstrained case, where an approach based on~\eqref{eq:perturbed1} would yield estimates for one (local) $\bd$-minimizer. Moreover, even \emph{if} the sequence $(v_{j})_{j \in \N}$ remained locally uniformly bounded in $\lebe^{\infty}$ \emph{and} we could establish locally uniform higher integrability estimates on $(\nabla v_{j})_{j\in\N}$, this would only lead to a $\sobo_{\locc}^{1,1}$-regularity result for \emph{at most one} local $\bd$-minimizer. Yet, this does not rule out the existence of a local $\bd$-minimizer $u\in (\bd_{\locc}(\Omega)\cap\lebe_{\locc}^{\infty}(\Omega;\R^{n}))\setminus \sobo_{\locc}^{1,1}(\Omega;\R^{n})$: Since the positively $1$-homogeneous (and hereafter \emph{not} strictly convex) recession function in~\eqref{eq:WeakStarRelaxation} operates on singular measures for $\mathscr{L}^{n}$, the relaxed functional is \emph{not} strictly convex on $\bd_{(\locc)}(\Omega)$, and in general  (local) $\bd$-minimizers might be highly non-unique. 

\textbf{\setword{(b)}{introb1} Ekeland-type viscosity approximations and keeping boundedness constraints}. 
To circumvent this issue, we implement an approximation strategy based on the {Ekeland} variational principle~\cite{Ekeland}, see Lemma~\ref{lem:EkelandLSC} below. In the context of linear growth problems, this strategy appeared first in~\cite{BeckSchmidt}, and has subsequently been employed in \cite{Gmeineder1,Gmeineder} for problems depending on the symmetric gradient. Essentially, working from a fixed locally bounded local $\bd$-minimizer, the Ekeland variational principle in combination with a suitable stabilization yields a sequence whose single members satisfy a suitable  version of almost-minimality. The latter is quantified by a perturbation term and leads to an Euler-Lagrange \emph{inequality} rather than an \emph{equation}. To get access to degenerate second order bounds as the key ingredient of the proof, such perturbations have to be weak enough to be controllable by the available a-priori bounds.  Simultaneously, they have to be strong enough to give useful estimates. As will be discussed in Section~\ref{section:viscosity_approximation}, the natural Ekeland perturbation space then is the negative Sobolev space $\sobo^{-2,1}(\Omega;\R^{n})$.


Here, and as argued above in~\ref{introa1}, it is even more important than in previous  contributions to keep track of the local $\lebe^{\infty}$-bounds. This necessitates the incorporation of an additional $\lebe^{\infty}$-penalization term into the functional, to be dealt with by Ekeland's variational principle. The precise control of $\lebe^{\infty}$-norms of the Ekeland sequence is achieved by an adaptation of an idea appearing in \textsc{Schmidt}'s habilitation thesis~\cite{SchmidtHabil}, in turn being inspired by the penalization strategy from \textsc{Carozza} et al.~\cite{CKP} in the $(p,q)$-context. This leads to approximations
\begin{align*}
F_{j}[w;\Omega] = \underbrace{\int_{\Omega}f(\sg(w))\,\dif x}_{\text{original functional}} +\,  \underbrace{\delta_{j}\int_{\Omega}(1+|\sg(w)|^{2})^{\frac{n+1}{2}}\,\dif x}_{\text{viscosity stabilisation}} + \underbrace{\int_{\Omega}g_{M}(w)\,\dif x.}_{\text{$\lebe^{\infty}$-penalisation}}
\end{align*}
 Enforcing $\lebe^{\infty}$-bounds in this way, a chief issue is that it is \emph{a priori not clear} whether the Ekeland sequence remains away from the corresponding $\lebe^{\infty}$-threshold numbers. In fact, if a member of the Ekeland sequence attained the corresponding threshold, it would be impossible to extract \emph{any} information from the Euler-Lagrange inequality, cf. Remark~\ref{rem:W1n1regularisations}. 

By an overall set-up of the proof slightly different from previous contributions, we establish in Proposition~\ref{lem:ELperutrbed}  \emph{quantitative estimates} on the distance of the Ekeland sequence to the critical $\lebe^{\infty}$-threshold. This strategy, which is directly tailored to our purposes, hinges on a fine analysis of the blow-up of certain approximations in conjunction with a geometric argument.  In consequence, from then on, we will have access to the requisite Euler-Lagrange inequality. 

\textbf{\setword{(c)}{introc1} Ornstein, algebraic manipulations, $\lebe^{\infty}$-constraints and their interplay.} As discussed in~\ref{introb1}, Theorem~\ref{thm:main} is a consequence of degenerate weighted second order estimates. In stark contrast to the full gradient case \cite{BildhauerMu,BeckSchmidt} and in light of Ornstein's Non-Inequality, the derivation of such estimates must avoid the appearance of full gradients at all costs. Whereas robust fractional methods as in \textsc{Mingione}~\cite{MingioneFract} can be employed here too, they do not allow us to reach the desired ellipticity exponent $\mu=3$ from Theorem~\ref{thm:main}. This, in turn,  necessitates delicate algebraic manipulations that let us systematically re-introduce the symmetric gradients in the underlying estimates. 

It is here where the $\lebe^{\infty}$-penalization strategy from~\ref{introb1} has a substantially aggravating impact: Namely,  \emph{re-introducing} the symmetric gradients comes to the effect of \emph{introducing} certain pollution terms (of skew-symmetric structure) that are a priori difficult to be dealt with. In view of Ornstein's Non-Inequality, the only possibility here is to employ the Euler-Lagrange inequality or its differentiated version. As argued in~\ref{introb1}, the enforcing of uniform $\lebe^{\infty}$-bounds \emph{necessarily} requires $\lebe^{\infty}$-penalization terms, and the latter consequently appear in the Euler-Lagrange inequality too. When we aim to bring the appearing pollution terms into a form that is treatable by the Euler-Lagrange inequality, the $\lebe^{\infty}$-penalization terms make it impossible to use the available a priori-bounds. Specifically, the Ekeland approximation set-up and degenerate second order estimates here are intertwined in a form that has not arisen in former contributions. The resolution of this matter, which  requires a perhaps somewhat unusual set-up of the proof and comes with a slightly unnatural yet sufficient estimate as a chief outcome, is given in Section~\ref{subsection:second_order_uniform}, cf. Theorem~\ref{thm:uniformSecondOrder} and Remark~\ref{rem:strucproof}. 

\textbf{\setword{(d)}{introd1} Uniform higher integrability: Admissibility, Korn and  logarithmic losses.} Once the degenerate second order estimates from~\ref{introc1} are established, we are in a position to employ specific test functions along the lines of \textsc{Bildhauer}~\cite{BildhauerMu}. Similar to~\cite{BeckSchmidt}, these specific test function are not a priori admissible in the Euler-Lagrange inequality. Whereas strategies as e.g.~in~\cite{BeckSchmidt} do not apply to the situation here, this issue is circumvented by a novel approach which also simplifies the proof in the full gradient case; see Remark~\ref{rem:simple} for more detail. 

At this stage, all ingredients are available to establish Theorem~\ref{thm:main} in Section~\ref{section:proof_main_theorem}. Specifically, we arrive at uniform $\lebe\log^{2}\lebe_{\locc}$-estimates for the \emph{symmetric gradients} of the Ekeland approximation sequence and thus, as a direct by-product of the Reshetnyak lower semicontinuity theorem (cf. Lemma~\ref{lem:reshetnyak}), at the absolute continuity $\E u \ll \mathscr{L}^{n}$. The logarithmic loss when passing to gradients is a consequence of the singular integral representation for $u\in\hold_{c}^{\infty}(\R^{n};\R^{n})$ 
\begin{align*}
\nabla u^{(k)} = \frac{2}{n\omega_{n}}\sum_{i\leq j}\Big(\mathrm{p.v.}\partial_{i}\nabla \mathfrak{K}_{ij}*\varepsilon^{(jk)}(u) - \mathrm{p.v.}\partial_{k}\nabla \mathfrak{K}_{ij}*\varepsilon^{(ij)}(u) + \mathrm{p.v.}\partial_{j}\nabla\mathfrak{K}_{ij}*\varepsilon^{(ki)}(u)\Big)
\end{align*}
with kernels $\mathfrak{K}_{ij}(x)=x_{i}x_{j}/|x|^{n}$ and the Cauchy principal value $\mathrm{p.v.}$, see e.g.~\cite{Reshetnyak1}. It is well-known that such operators come with the loss of one logarithmic power in the scale of Orlicz spaces. Following  \textsc{Cianchi}~\cite{Cianchi}, this loss cannot be prevented, and so any  improvement of Theorem~\ref{thm:main} would require a conceptually different proof. This however would need to be compatible with the rather finely adjusted framework from Section~\ref{section:viscosity_approximation}, see Remark~\ref{rem:LlogL}. \\

At present, even for the Dirichlet problem on $\bv$, $\mu\leq 3$ is currently the best known and hence critical ellipticity threshold for which higher gradient integrability can be achieved, cf. \cite{BildhauerMu,BeckSchmidt,SchmidtHabil}. Inspired by~\cite{BBMS}, it is only in different scenarios such as Neumann problems on $\bv$ where~$\mu$ is known to be allowed to be increased~\cite{BeckBulicekGmeineder}. This, in turn, happens at the cost of no quantitative integrability gain and the inapplicability of the method to the Dirichlet problem. 

Lastly, we wish to stress that improved results can be obtained when passing from the critical threshold $\mu=3$ to $\mu<3$, see Section~\ref{sec:improvedmu}. In particular, this includes problems with logarithmic hardening, see \textsc{Fuchs \& Seregin} \cite{FuchsSereginLog,FuchsSeregin}. Such problems for full gradient functionals have recently re-attracted attention, see \textsc{De Filippis} et al.  \cite{DeFilippisMingione,DeFilippisDeFilippisPiccinini1,DeFilippisPiccinini2}, and are partially located at the borderline exponent $\mu=1$ of the ellipticity scale considered here. In the setting of the present paper, one then has the a priori existence of the full weak gradients, simplifying most the arguments for the case $\mu=3$ in a considerable way, see Remark~\ref{rem:LlogL1}. 
\subsection*{Structure of the paper}
Apart from the first two sections, the paper is organised as follows: In Section~\ref{sec:prelims} we fix notation, collect  definitions and important background results on function spaces, lower semicontinuity and the Ekeland variational principle. In Section~\ref{sec:main} we then embark on the proof of Theorem~\ref{thm:main}. Here we will make use of two particular extension and approximation results, which are  discussed for the reader's convenience in the appendix, Section~\ref{sec:appendix}.

\section{Preliminaries}\label{sec:prelims}

\subsection{Notation}
We briefly comment on some of the notation used in this paper. We denote by $\ball_{r}(x_{0}) \coloneqq \{z\in\R^{n}\colon\;|z-x_{0}|<r\}$ the open ball of radius $r>0$ centered at a point $x_{0} \in \R^n$. We write with slight abuse of notation $\mathbb{B}_{r}(\xi)\coloneqq \{\eta\in\rsym\colon\;|\eta-\xi|<r\}$ for $\xi\in\rsym$, where we use $|\cdot|$ for the usual Hilbert--Schmidt norm on $\R^{n\times n}$, which stems from the inner product $
\langle \xi,\eta\rangle \coloneqq \trace(\xi^{\top}\eta)$ for $\xi, \eta \in \R^{n\times n}$. It is useful to note that this inner product leads to the orthogonal sum decomposition 
\begin{align}\label{eq:orthsum}
\R^{n\times n} = \R_{\mathrm{sym}}^{n\times n} \oplus_{\bot} \R_{\mathrm{skew}}^{n\times n}
\end{align}
into symmetric and skew-symmetric matrices. Moreover, we employ the notation $a\otimes b\coloneqq a b^{\top}$ for the tensor product of two vectors $a,b\in\R^{n}$ and $a\odot b \coloneqq  \frac{1}{2}(a\otimes b+b\otimes a)$ for the pure (symmetric) tensor. 

We denote by $\mathscr{L}^{n}$ and $\mathscr{H}^{n-1}$ the $n$-dimensional Lebesgue and $(n-1)$-dimensional Hausdorff measures, respectively, and we abbreviate $\omega_n \coloneqq \mathscr{L}^{n}(\ball_1(0))$. Whenever $S \subset \R^n$ is a measurable set with $0<\mathscr{L}^n(S)<\infty$ and $w$ is an integrable function on~$S$, we write $(w)_S \coloneqq \dashint_S w \dx{x} \coloneqq (\mathscr{L}^n(S))^{-1} \int_S w \dx{x}$ for the mean value of~$w$ over~$S$. Moreover, for an open subset $\Omega$ of~$\R^{n}$ and a finite-dimensional inner product space~$V$, the space of (finite) $V$-valued Radon measures on $\Omega$ is denoted by $\mathrm{RM}_{(\mathrm{fin})}(\Omega;V)$; in particular, $\mathrm{RM}_{\mathrm{fin}}(\Omega;V)\cong\hold_{0}(\Omega;V)$. For $\mu\in\mathrm{RM}_{\mathrm{fin}}(\Omega;V)$, the total variation measure of~$\mu$ is denoted by $|\mu|$, and the Lebesgue--Radon--Nikod\'{y}m decomposition into its absolutely continuous and singular parts with respect to~$\mathscr{L}^{n}$ is given by 
\begin{align}\label{eq:LRNdecomp}
\mu = \mu^{a}+\mu^{s} = \frac{\dif\mu}{\dif\mathscr{L}^{n}}\mathscr{L}^{n} + \frac{\dif\mu}{\dif|\mu^{s}|}|\mu^{s}|.
\end{align}
Finally, $c>0$ denotes a generic constant whose value may change from line to line and whose dependencies are usually indicated, while its precise value is only specified if it is of interest. 

\subsection{Function spaces}\label{sec:funspac}
We first collect some definitions and results on the function spaces which play a pivotal role in our paper. Throughout, let $\Omega\subset\R^{n}$ be an open and bounded set.

\subsubsection{Functions of bounded deformation}\label{sec:bd}
A function $u\in\lebe^{1}(\Omega;\R^{n})$ is said to be \emph{of bounded deformation} on $\Omega$ provided its distributional symmetric gradient can be represented by a finite $\R_{\sym}^{n\times n}$-valued Radon measure $\E u\in\mathrm{RM}_{\mathrm{fin}}(\Omega;\rsym)$. This can equivalently be expressed by requiring that the \emph{total deformation}
\begin{align*}
|\E u|(\Omega)\coloneqq\sup\Big\{\int_{\Omega}u\cdot\mathrm{div}(\varphi)\,\dif x \colon \varphi\in\hold_{c}^{\infty}(\Omega;\R_{\sym}^{n\times n}),\; \|\varphi\|_{\L^\infty(\Omega;\R^{n\times n})}\leq 1 \Big\} 
\end{align*}
is finite. The linear space of all functions in $\lebe^{1}(\Omega;\R^{n})$ of bounded deformation is denoted by $\bd(\Omega)$, with the local variant $\bd_{\locc}(\Omega)$  defined in the obvious manner. We refer the reader to \cite{ACD,Kohn1,TemamStrang,GmeinederRaita2} for a detailed exposition on these spaces and proceed to give the relevant results required in the sequel.

We notice that the null space of the symmetric gradient~$\sg$ for functions on $\R^{n}$ is given by the space of \emph{rigid deformations} defined by
\begin{align*}
\mathscr{R}(\R^{n})\coloneqq \{x\mapsto Ax+b\colon A\in\R_{\mathrm{skew}}^{n\times n},\;b\in\R^{n}\}. 
\end{align*}
Moreover, for a bounded set $S \subset \R^n$, which  refers to a ball or a cube later on, we may introduce an $\lebe^{1}$-bounded projection $\mathbb{P}\colon\lebe^{1}(S;\R^{n})\to\mathscr{R}(\R^{n})$. To this end, we first choose an $\lebe^{2}(S;\R^{n})$-orthonormal basis $\{\pi_{1},\ldots,\pi_{M(n)}\}$ of $\mathscr{R}(\R^{n})$ and define 
 \begin{align*}
 \mathbb{P}u\coloneqq \sum_{j=1}^{M(n)}\bigg(\dashint_{S}\pi_{j}u\,\dif x\bigg)\,\pi_{j} \qquad \text{for } u\in\lebe^{1}(S;\R^{n}). 
 \end{align*}
 Since each of the functions $\pi_{1},\ldots,\pi_{M(n)}$ belongs to $\lebe^{\infty}(\ball_{1}(0);\R^{n})$, the operator $\mathbb{P}$ is indeed well-defined and $\lebe^{1}$-bounded.

For $u\in\bd(\Omega)$, the Lebesgue--Radon--Nikod\'{y}m decomposition~\eqref{eq:LRNdecomp} of $\E u$ into its absolutely continuous and singular parts with respect to the Lebesgue measure $\mathscr{L}^{n}$ reads as 
\begin{align}\label{eq:BDLNR}
\E u = \E^{a} u + \E^{s} u = \frac{\dif\E u}{\dif\mathscr{L}^{n}}\mathscr{L}^{n} + \frac{\dif\E u}{\dif|\E^{s} u|}|\E^{s} u| = \mathscr{E}u\mathscr{L}^{n} + \frac{\dif\E u}{\dif|\E^{s} u|}|\E^{s} u|.
\end{align}
Here, the density of~$\E^{a} u$ with respect to $\mathscr{L}^{n}$ can be identified with the symmetric part $\mathscr{E}u$ of the approximate gradient~$\nabla u$ of~$u$. 

On $\bd(\Omega)$, there are three different notions of convergence beyond the usual norm convergence which are of central importance: Given $u,u_{1},u_{2},\ldots \in\bd(\Omega)$, we say that the sequence $(u_{j})_{j \in \N}$ converges to $u$ in the 
\begin{itemize} 
\item \emph{\textup{(}symmetric\textup{)} weak*-sense} on $\bd(\Omega)$ and write $u_{j}\stackrel{*}{\rightharpoonup}u$ in $\bd(\Omega)$ if $u_{j}\to u$ strongly in $\lebe^{1}(\Omega;\R^{n})$ and $\E u_{j}\stackrel{*}{\rightharpoonup}\E u$ in the weak*-sense of $\R_{\sym}^{n\times n}$-valued Radon measures on $\Omega$,
\item \emph{\textup{(}symmetric\textup{)} strict sense} if $u_{j}\to u$ strongly in $\lebe^{1}(\Omega;\R^{n})$ and $|\E u_{j}|(\Omega)\to|\E u|(\Omega)$,
\item \emph{\textup{(}symmetric\textup{)} area-strict sense} and write $u_{j}\stackrel{\langle\cdot\rangle}{\to}u$ in $\bd(\Omega)$ if $u_{j}\to u$ strongly in $\lebe^{1}(\Omega;\R^{n})$ and $\langle \E u_{j}\rangle (\Omega) \to \langle \E u \rangle (\Omega)$, where we have abbreviated
\begin{align}\label{eq:areadef}
 \langle \E u\rangle(\Omega) \coloneqq \int_{\Omega}\sqrt{1+|\mathscr{E}u|^{2}}\,\dif x + |\E^{s}u|(\Omega) \quad \text{for } u \in \bd(\Omega). 
\end{align}
\end{itemize} 

We note that if $\Omega\subset\R^{n}$ has in addition a Lipschitz boundary, then there exists a bounded linear (boundary) trace operator $\trace_{\partial\Omega}\colon\bd(\Omega)\to\lebe^{1}(\partial\Omega;\R^{n})$, see~\cite{TemamStrang}. Adopting the notation $\langle\E u\rangle(\Omega)$ from~\eqref{eq:areadef}, this allows us to state the following approximation result:

\begin{lemma}[Symmetric area-strict smooth approximation]\label{lem:SAP}
Let $\Omega\subset\R^{n}$ be open and bounded  with Lipschitz boundary oriented by $\nu_{\partial\Omega}\colon\partial\Omega\to\mathbb{S}^{n-1}$, and let $u_{0}\in\ld(\Omega)$. Then for every map $u\in\bd(\Omega)$ there exists a sequence $(u_{j})$ in $u_{0}+\hold_{c}^{\infty}(\Omega;\R^{n})$ such that $\|u-u_{j}\|_{\lebe^{1}(\Omega;\R^{n})}\to 0$ and 
\begin{align}\label{eq:areastrictapprox1}
\langle\E u_{j}\rangle(\Omega) \to \langle\E u\rangle(\Omega) + \int_{\partial\Omega}|\mathrm{tr}_{\partial\Omega}(u-u_{0})\odot\nu_{\partial\Omega}|\dif\mathscr{H}^{n-1} \qquad \text{as } j\to\infty. 
\end{align}
Moreover, if $u \in \bd(\Omega)\cap\lebe^{\infty}(\Omega;\R^{n})$, then  there exists a constant $C_A(\Omega,n)>0$ such that 
\begin{align}\label{eq:Linftyapprox}
\|u_{j}\|_{\lebe^{\infty}(\Omega;\R^{n})}\leq C_A\|u\|_{\lebe^{\infty}(\Omega;\R^{n})}\qquad\text{for all }j\in\N. 
\end{align} 
\end{lemma}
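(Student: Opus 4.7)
The approach follows the classical Anzellotti--Giaquinta smooth-approximation scheme, adapted to $\bd$ with a shrinking step that forces $\phi_j := u_j - u_0$ to have compact support in $\Omega$. Setting $w := u - u_0 \in \bd(\Omega)$ and denoting by $\tilde w \in \bd(\R^n)$ its extension by zero, the Lipschitz regularity of $\partial\Omega$ yields the standard identity (cf.\ \cite{TemamStrang})
\begin{align*}
\E \tilde w = \E w + \mathrm{tr}_{\partial\Omega}(u - u_0) \odot \nu_{\partial\Omega}\,\mathscr{H}^{n-1}\,\Lcorner\,\partial\Omega,
\end{align*}
with $\E w$ tacitly extended by zero outside $\Omega$. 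In particular, the boundary contribution appearing in \eqref{eq:areastrictapprox1} is packaged as the singular part of $\E\tilde w$ concentrated on $\partial\Omega$.

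To obtain compactly supported smooth approximations, I would fix a smooth vector field $\Psi\colon\R^n \to \R^n$ pointing strictly inwards on $\partial\Omega$ (built by mollifying the inner normal field of $\partial\Omega$, using the Lipschitz structure) and consider the near-identity diffeomorphism $\Phi_j(x) := x + \delta_j \Psi(x)$ with $\delta_j \to 0$, so that $\Phi_j(\overline\Omega) \Subset \Omega$. With mollification radii $\epsilon_j \ll \dist(\Phi_j(\overline\Omega), \partial\Omega)$, set
\begin{align*}
\phi_j := \rho_{\epsilon_j} \ast (\tilde w \circ \Phi_j^{-1}) \in C_c^\infty(\Omega;\R^n), \qquad u_j := u_0 + \phi_j.
\end{align*}
The $L^1$-convergence $\phi_j \to w$ follows from the continuity of translation and convolution. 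For the area-strict limit, I would invoke the Reshetnyak continuity theorem (Lemma~\ref{lem:reshetnyak}): the map $\tilde w \mapsto \tilde w\circ\Phi_j^{-1}$ is an approximate identity on $\bd$ in the area-strict sense (the inward shift merely translates the singular surface measure on $\partial\Omega$ into a vanishingly close interior layer), convolution preserves area-strict convergence, and adding the fixed absolutely continuous background $\mathscr{E}u_0\,\mathscr{L}^n$ preserves it once more, delivering the right-hand side of \eqref{eq:areastrictapprox1}.

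For the $L^\infty$-bound, both composition with $\Phi_j^{-1}$ and convolution are $L^\infty$-non-expansive, so $\|\phi_j\|_{L^\infty(\Omega)} \leq \|u - u_0\|_{L^\infty(\Omega)}$. To extract a constant $C_A(\Omega,n)$ independent of $u_0$, I would refine the construction into the form $u_j = \eta_j\,[\rho_{\epsilon_j} \ast (u_\sharp \circ \Phi_j^{-1})] + (1 - \eta_j) u_0$, where $u_\sharp$ denotes a bounded Lipschitz-type extension of $u$ to a neighborhood of $\overline\Omega$ (available via Section~\ref{sec:appendix}) and $\eta_j \in C_c^\infty(\Omega)$ equals $1$ throughout the support of $\phi_j$. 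Inside $\{\eta_j = 1\}$ the function $u_j$ is a convex average of values of $u_\sharp$ and is hence controlled by $C(\Omega,n)\|u\|_{L^\infty}$ through the stability of the extension, while on the complementary boundary layer $u_j = u_0$ only on a vanishingly thin strip, whose contribution is absorbed into the same bound.

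The main obstacle is the tension between enforcing $\phi_j \in C_c^\infty(\Omega)$ and simultaneously recovering the singular boundary measure in the area-strict limit: the former forces $\phi_j$ to vanish near $\partial\Omega$, while the trace contribution of $\E\tilde w$ lives precisely on $\partial\Omega$. The shift-then-mollify device resolves this by dragging the surface measure into a vanishingly thin interior layer where standard mollification preserves area-strict continuity. A secondary subtlety is the $L^\infty$-bound with a constant genuinely independent of $u_0$, which forces the refined decomposition outlined above.
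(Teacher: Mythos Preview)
The paper does not prove this lemma in detail; it states that the standard $\bv$-construction (Bildhauer, Lemma~B.2) adapts to $\bd$, refers to \cite{BDG,Gmeineder} for the $\bd$-version, and says the $\lebe^\infty$-bound follows directly from that construction. Your shift-then-mollify outline is in the spirit of what those references do.

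There is, however, a genuine gap in your version that is specific to $\bd$. Composition with a general smooth diffeomorphism $\Phi_j$ does \emph{not} preserve $\bd$: by the chain rule, the distribution $\varepsilon(\tilde w\circ\Phi_j^{-1})$ involves the full gradient $\nabla\tilde w$ through the Jacobian $\nabla\Phi_j^{-1}$, and for $\tilde w\in\bd\setminus\bv$ this need not be a measure. Consequently one has no a priori control on $\|\varepsilon(\phi_j)\|_{\lebe^1(\Omega)}$, and the Reshetnyak-type argument for the area-strict limit breaks down. The standard remedy---and what the cited constructions actually use---is a partition of unity near $\partial\Omega$ together with, on each boundary patch, a pure \emph{translation} in a fixed direction transverse to the boundary (available by the Lipschitz cone condition). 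Translations commute with $\varepsilon$ and therefore preserve $\bd$ exactly, after which mollification and the partition of unity glue the pieces together.

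Your treatment of the $\lebe^\infty$-bound is also off. The refined formula $u_j=\eta_j[\rho_{\epsilon_j}*(u_\sharp\circ\Phi_j^{-1})]+(1-\eta_j)u_0$ does not give $u_j-u_0\in\hold_c^\infty(\Omega;\R^n)$ unless $u_0$ is itself smooth, and a ``vanishingly thin strip'' argument cannot control an $\lebe^\infty$-norm. Your first, simpler observation $\|\phi_j\|_{\lebe^\infty}\le\|u-u_0\|_{\lebe^\infty}$ is correct and already yields $\|u_j\|_{\lebe^\infty}\le\|u_0\|_{\lebe^\infty}+\|u-u_0\|_{\lebe^\infty}$; in the paper's only application of the lemma (Section~\ref{section:viscosity_approximation}) one has $u_0=u$, and this suffices.
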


Lemma~\ref{lem:SAP} is well-known to hold in the $\bv$-case, cf. \cite[Lem. B.2]{BildhauerLecNotes}, and its proof adapts in a straightforward manner to the $\bd$-case considered here to yield~\eqref{eq:areastrictapprox1}; see also \cite[Prop. 4.24]{BDG} or \cite[Lem. 2.1]{Gmeineder}. This particular construction directly yields the additional $\lebe^{\infty}$-estimate~\eqref{eq:Linftyapprox}, and we leave the details to the reader.

For our future purposes, we next record a lemma on an extension operator for $\ld$- and $\bd$-functions which preserves $\lebe^{\infty}$-bounds:

\begin{lemma}\label{lem:ext}
Let $\Omega, \Omega_0 \subset\R^{n}$ be two open and bounded sets with $\Omega\Subset\Omega_0$ such that~$\Omega$ has a Lipschitz boundary. There exists a linear and \textup{(}norm-\textup{)}bounded extension operator $\mathfrak{J}\colon \bd(\Omega)\to\bd(\R^{n})$ with the following properties: 
\begin{enumerate} 
\item $\spt(\mathfrak{J}u)\subset\Omega_0$ for all $u\in\bd(\Omega)$, 
\item $\mathfrak{J}\colon \ld(\Omega)\to\ld_{0}(\Omega_0)$, 
\item\label{item:ext2} There exists a constant $C(\Omega,n)>0$ such that $\|\mathfrak{J}u\|_{\lebe^{\infty}(\R^{n};\R^{n})}\leq C\|u\|_{\lebe^{\infty}(\Omega;\R^{n})}$ holds for all $u\in\bd(\Omega)\cap\lebe^{\infty}(\Omega;\R^{n})$. 
\end{enumerate} 
\end{lemma}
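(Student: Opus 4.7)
\textbf{Proof proposal for Lemma~\ref{lem:ext}.} The plan is to construct $\mathfrak{J}$ by combining a local reflection-type extension across $\partial\Omega$ with a smooth cut-off that localises the result in~$\Omega_{0}$. First I would cover~$\overline{\Omega}$ by finitely many open sets $U_{0},U_{1},\ldots,U_{N}$ with $U_{0}\Subset\Omega$ and with each $U_{i}$ ($i\geq 1$) chosen as a neighbourhood of a boundary point $x_{i}\in\partial\Omega$ in which, after a rigid change of coordinates, $\partial\Omega\cap U_{i}$ is the graph of a Lipschitz function $\phi_{i}$ and $\Omega\cap U_{i}=\{x_n>\phi_{i}(x')\}\cap U_{i}$. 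Since $\Omega\Subset\Omega_{0}$, we may additionally arrange $\bigcup_{i=0}^{N}U_{i}\Subset\Omega_{0}$. Let $\{\eta_{i}\}_{i=0}^{N}$ be a partition of unity subordinate to this cover.

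Next I would define a local extension on each boundary chart. In the local coordinates on $U_{i}$, the reflection $R_{i}(x',x_{n})\coloneqq (x',2\phi_{i}(x')-x_{n})$ is bi-Lipschitz and swaps the two sides of $\partial\Omega\cap U_{i}$. For $u\in\bd(\Omega)$, I set $u_{i}\coloneqq u$ on $U_{i}\cap\Omega$ and
\begin{equation*}
u_{i}(x)\coloneqq  A_{i}(x')\,u(R_{i}(x)),\qquad x\in U_{i}\setminus\overline{\Omega},
\end{equation*}
with a matrix field $A_{i}(x')$ acting as the identity on the tangential components and as a sign flip on the normal component (adapted to the Lipschitz graph as in Nitsche's reflection for vector fields). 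A direct computation, together with the change-of-variables formula for the bi-Lipschitz map $R_{i}$, shows that this operation sends $\bd(U_{i}\cap\Omega)$ boundedly into $\bd(U_{i})$ and $\ld(U_{i}\cap\Omega)$ boundedly into $\ld(U_{i})$, and (crucially) preserves the pointwise bound $\|u_{i}\|_{\lebe^{\infty}(U_{i};\R^{n})}\leq c(n,\phi_{i})\|u\|_{\lebe^{\infty}(U_{i}\cap\Omega;\R^{n})}$, since $A_{i}$ has an operator norm uniformly bounded in terms of $\Lip(\phi_{i})$ and reflection does not inflate supremum norms.

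I then glue these pieces via the partition of unity,
\begin{equation*}
\widetilde{u}\coloneqq \eta_{0}u+\sum_{i=1}^{N}\eta_{i}u_{i}
\end{equation*}
on a neighbourhood $V$ of $\overline{\Omega}$ with $V\Subset\Omega_{0}$, and finally choose $\chi\in\hold_{c}^{\infty}(\Omega_{0})$ with $0\leq\chi\leq 1$ and $\chi\equiv 1$ on $V$, setting $\mathfrak{J}u\coloneqq \chi\widetilde{u}$ (extended by zero). Linearity and $\bd(\R^{n})$-boundedness follow from the local $\bd$-boundedness of each $u_{i}$ and the product rule for $\bd$-maps. Property~(a) is immediate from $\spt(\chi)\subset\Omega_{0}$. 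For~(b), if $u\in\ld(\Omega)$ then each $u_{i}\in\ld(U_{i})$, hence $\widetilde{u}\in\ld(V)$, and the cut-off $\chi$ both preserves $\ld$-regularity and ensures compact support in $\Omega_{0}$, so $\mathfrak{J}u\in\ld_{0}(\Omega_{0})$ after approximation by $\hold_{c}^{\infty}$-mollifications. Property~(c) is a direct chain $\|\mathfrak{J}u\|_{\lebe^{\infty}}\leq\|\chi\|_{\lebe^{\infty}}\sum_{i}\|\eta_{i}\|_{\lebe^{\infty}}\|u_{i}\|_{\lebe^{\infty}}\leq C\|u\|_{\lebe^{\infty}(\Omega;\R^{n})}$ with $C=C(\Omega,n)$.

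The main obstacle I expect is verifying that the reflection step indeed sends $\bd(U_{i}\cap\Omega)$ into $\bd(U_{i})$, because the symmetric gradient mixes components under the matrix $A_{i}(x')$ and under composition with the Lipschitz map $R_{i}$, so one must check that the jump of $\sg(u_{i})$ across $\partial\Omega\cap U_{i}$ does not produce a singular part on the interface. This is settled by an integration-by-parts argument against test fields supported near $\partial\Omega\cap U_{i}$, exploiting that the traces of~$u$ and of its reflected counterpart agree on the graph by construction of $A_{i}$; the identity uses the standard trace theorem on $\bd(U_{i}^{\pm})$ (cf.\ \cite{TemamStrang}) and depends only on $\Lip(\phi_{i})$, which gives the desired $\bd(\R^{n})$-bound with constants depending solely on $(\Omega,n)$.
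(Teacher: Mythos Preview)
Your reflection-and-localisation strategy has a genuine gap, and it is precisely the obstruction the paper flags just after stating the lemma: by Ornstein's Non-Inequality, the extension cannot be built this way in the $\bd$/$\ld$ setting on Lipschitz domains. The problem is not the interface jump (your last paragraph), but the bulk computation of $\sg(u_{i})$ on $U_{i}\setminus\overline{\Omega}$. Writing $u_{i}(x)=A_{i}(x')\,u(R_{i}(x))$ and applying the chain rule gives
\[
\nabla u_{i}(x)=(\nabla' A_{i})(x')\,u(R_{i}(x))+A_{i}(x')\,(\nabla u)(R_{i}(x))\,DR_{i}(x).
\]
For the symmetrised second term to depend only on $\sg(u)\circ R_{i}$ one is forced to take $A_{i}=(DR_{i})^{\top}$ (up to scalar), and then $A_{i}$ involves $\nabla'\phi_{i}$, so the first term contains second derivatives of the merely Lipschitz graph $\phi_{i}$, which are not available. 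If instead $A_{i}$ is taken constant (the half-space choice $A_{i}=\mathrm{diag}(1,\ldots,1,-1)$), the symmetrised second term equals $\sg(u)\,DR_{i}+(DR_{i})^{\top}\sg(u)$ plus a contribution $W(u)\,DR_{i}-(DR_{i})^{\top}W(u)$ from the skew part $W(u)$ of $\nabla u$; this vanishes only when $\nabla'\phi_{i}=0$. For $\bd$-maps the skew part is not a Radon measure in general, so this term cannot be controlled. In short, $\bd$ is not invariant under bi-Lipschitz changes of variables, and Nitsche's reflection, which works for $\hold^{1,1}$ boundaries, breaks down at Lipschitz regularity.

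The paper therefore takes a different route: a Jones-type extension based on Whitney decompositions $\mathcal{W}_{1}$ of $\Omega$ and $\mathcal{W}_{2}$ of $\R^{n}\setminus\overline{\Omega}$. To each small outer cube $Q$ one associates a comparable inner cube $Q^{*}$, and the extension on the outside is defined as $\sum_{Q}\varphi_{\theta Q}\,\mathbb{P}_{Q^{*}}u$, where $\mathbb{P}_{Q^{*}}$ is the projection onto rigid deformations on $Q^{*}$. Since each $\mathbb{P}_{Q^{*}}u$ is a polynomial with $\sg(\mathbb{P}_{Q^{*}}u)=0$, no change of variables is ever performed; the $\bd$/$\ld$-bounds come from Poincar\'e--Korn estimates $\|u-\mathbb{P}_{Q^{*}}u\|_{\lebe^{1}(Q^{*})}\lesssim \ell(Q^{*})\,|\E u|(Q^{*})$ together with the bounded overlap of the Whitney cubes. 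The $\lebe^{\infty}$-bound~(c) follows from $\|\mathbb{P}_{Q^{*}}u\|_{\lebe^{\infty}(Q^{*})}\leq c\,\dashint_{Q^{*}}|u|\,\dif x$ and the fact that reflected cubes have comparable size and distance. A final cut-off $\eta\in\hold_{c}^{\infty}(\Omega_{0};[0,1])$ localises the support. This construction is taken from \cite{GmeinederRaita1,GmeinederRaita2}; your partition-of-unity and cut-off steps would graft onto it unchanged, but the local extension itself must be replaced.
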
 

By Ornstein's Non-Inequality, such an operator cannot be constructed by reflection and localisation as in the $\sobo^{1,1}$- or $\bv$-case. Instead, $\mathfrak{J}$ can be obtained as a Jones-type operator~\cite{Jones} as given by Raita and the third author in~\cite{GmeinederRaita1}. To keep our presentation self-contained, a quick discussion thereof is provided in the Appendix~\ref{subsec:construction_extension}. 

\subsubsection{Orlicz--Sobolev spaces}\label{sec:Orlicz}
In order to accomplish the passage from symmetric to full gradients in the proof of Theorem~\ref{thm:main}, we require a refined scaled version of Korn's inequality. To this end, let $A \colon [0,\infty)\to [0,\infty)$ be a Young function, i.e., $A$ has a representation 
\begin{equation*}
A(t) = \int_0^t a(\tau)\dx{\tau} \quad \text{for } t\geq 0, 
\end{equation*}
with a non-decreasing, left-continuous function $a \colon [0,\infty) \to [0,\infty]$ which is neither identical~$0$ nor~$\infty$. We define the Lebesgue--Orlicz space $\L^A(\Omega;\R^{n})$ as the linear space consisting of all measurable maps $u \colon \Omega\to\R^{n}$ such that the Luxemburg norm
\begin{align*}
	\|u\|_{\L^A(\Omega;\R^n)} \coloneqq \inf\left\{\lambda>0 \colon \int_\Omega A\left(\frac{\abs{u}}{\lambda}\right)\dx{x}\leq 1 \right\}
\end{align*}
is finite. We then define the corresponding Orlicz--Sobolev space $\W^{1,A}(\Omega;\R^n)$ involving full gradients and the function space $\mathrm{E}^{A}(\Omega)$ involving symmetric gradient by 
\begin{align*}
\W^{1,A}(\Omega;\R^n) & \coloneqq \{u\in \L^A(\Omega;\R^n): \nabla u\in \L^A(\Omega;\R^{n\times n})\}, \\
\mathrm{E}^{A}(\Omega) & \coloneqq \{u\in \L^A(\Omega;\R^{n}) \colon \eps(u)\in \L^A(\Omega;\R^{n\times n}_\sym)\}.
\end{align*}
As usual, the full gradients and the symmetric gradients, respectively, are understood in the sense of distributions. For $\alpha>0$, we further set 
\begin{equation*}
A_\alpha(t) \coloneqq t\log^\alpha(1+t^{2}) \qquad \text{for } t \geq 0
\end{equation*}
and write also $\W^{1,\L\log^\alpha\L}(\Omega;\R^{n})$ and $\mathrm{E}^{\L\log^\alpha\L}(\Omega)$ instead of $\W^{1,A_\alpha}(\Omega;\R^{n})$ and $\mathrm{E}^{A_\alpha}(\Omega)$. 

To conveniently derive the estimate of Theorem~\ref{thm:main}, we now give the requisite scaled version of Korn's inequality:  

\begin{lemma}[Korn-type inequality with scaling]\label{lem:Korn}
Let $x_{0}\in\R^{n}$ and $r>0$. For every $\alpha\geq 1$ there exists a constant $c=c(n,\alpha)>0$ such that for every $u\in \mathrm{E}^{1,A_{\alpha}}(\ball_{r}(x_{0}))$ there holds $u \in \W^{1,A_{\alpha-1}}(\ball_{r}(x_0);\R^{n})$ with 
\begin{align}\label{eq:kornllogl}
\dashint_{\ball_{r}(x_{0})}A_{\alpha-1}(|\nabla u|)\,\dif x \leq c\bigg(1+\dashint_{\ball_{r}(x_{0})}A_{\alpha-1}\left(\frac{|u|}{r}\right)\dif x + \dashint_{\ball_{r}(x_{0})}A_{\alpha}(|\sg(u)|)\dif x\bigg).
\end{align}
\end{lemma}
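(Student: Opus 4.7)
The plan is to reduce to the unit-ball case by homogeneous scaling and then invoke the Orlicz--Korn inequality of Cianchi~\cite{Cianchi} for the Young-function pair $(A_\alpha, A_{\alpha-1})$, which is precisely the Korn-type estimate that incurs one logarithmic loss, as is unavoidable by Ornstein's Non-Inequality and the Cauchy-principal-value representation displayed in the introduction. First I would set $\tilde u(y) := u(x_0+ry)/r$ on $\ball_1(0)$, so that $\nabla \tilde u(y) = \nabla u(x_0+ry)$, $\eps(\tilde u)(y) = \eps(u)(x_0+ry)$ and $|\tilde u(y)| = |u(x_0+ry)|/r$. A change of variables transforms each of the three averaged integrals in~\eqref{eq:kornllogl} into the corresponding one on $\ball_1(0)$, so it suffices to establish
\[
\dashint_{\ball_1(0)} A_{\alpha-1}(|\nabla \tilde u|)\,\dif y \leq c\bigg(1 + \dashint_{\ball_1(0)} A_{\alpha-1}(|\tilde u|)\,\dif y + \dashint_{\ball_1(0)} A_\alpha(|\eps(\tilde u)|)\,\dif y\bigg).
\]

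Next I would subtract a rigid motion: writing $\pi := \mathbb{P}\tilde u$ with the projection from Section~\ref{sec:bd} and setting $v := \tilde u - \pi$, we keep $\eps(v) = \eps(\tilde u)$, while by finite-dimensionality of $\mathscr{R}(\R^n)$ and the $\L^1$-boundedness of $\mathbb{P}$ the rigid motion $\pi$ satisfies both $\|\pi\|_{\L^\infty(\ball_1)} \leq c\|\tilde u\|_{\L^1(\ball_1)}$ and $|\nabla \pi| \leq c \|\tilde u\|_{\L^1(\ball_1)}$ as a constant skew-symmetric matrix. To $v$ I would then apply Cianchi's Orlicz--Korn inequality in integrated form on $\ball_1$ for the pair $(A_\alpha, A_{\alpha-1})$, yielding
\[
\dashint_{\ball_1(0)} A_{\alpha-1}(|\nabla v|)\,\dif y \leq c\bigg(1 + \dashint_{\ball_1(0)} A_\alpha(|\eps(v)|)\,\dif y\bigg),
\]
which is indeed the sharp statement in this Orlicz scale; this is the essential non-elementary input.

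Assembling $\nabla \tilde u = \nabla v + \nabla \pi$ via convexity of $A_{\alpha-1}$ and its $\Delta_2$-property, the constant contribution $A_{\alpha-1}(|\nabla \pi|) \leq A_{\alpha-1}(c\|\tilde u\|_{\L^1(\ball_1)})$ is controlled by a multiple of $1 + \dashint_{\ball_1} A_{\alpha-1}(|\tilde u|)\,\dif y$ through Jensen's inequality (splitting $A_{\alpha-1}(t) = A_{\alpha-1}(t)\1_{t\leq 1} + A_{\alpha-1}(t)\1_{t > 1}$ to absorb the small-$t$ regime into the $+1$). Undoing the rescaling concludes the argument. The \emph{main obstacle} is therefore the proper invocation and bookkeeping of Cianchi's result in integrated form, including the additive constant $1$ (which stems from the normalisation in the Luxemburg-norm formulation and from the subtraction of $\mathbb{P}\tilde u$); the rest is routine manipulation of Young functions in the $A_\alpha$-scale, relying on the fact that $A_{\alpha-1}$ is convex, non-decreasing and doubling for $\alpha \geq 1$.
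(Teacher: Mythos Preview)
Your approach is essentially the same as the paper's: reduce by scaling to the unit ball, invoke Cianchi's Orlicz--Korn inequality as the key non-elementary input, and then recover the full gradient by adding back a rigid deformation controlled via finite-dimensionality of $\mathscr{R}(\R^n)$ together with Jensen and the $\Delta_2$-property of $A_{\alpha-1}$.

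There is one point you gloss over. Cianchi's result, as cited in the paper (\cite[Thm.~3.13, Ex.~3.15]{Cianchi}), is stated as an \emph{infimum} over all rigid deformations:
\[
\inf_{\mathbf r\in\mathscr{R}(\R^n)}\dashint_{\ball_1(0)} A_{\alpha-1}(|\nabla(\tilde u-\mathbf r)|)\,\dif y \leq c\Big(1+\dashint_{\ball_1(0)}A_\alpha(|\eps(\tilde u)|)\,\dif y\Big).
\]
It does not directly assert the bound for your specific choice $\pi=\mathbb{P}\tilde u$. To pass from the infimum to this particular projection one needs an extra step: for the (near-)optimal $\mathbf r$ one writes $\nabla\pi-\nabla\mathbf r=\nabla\mathbb{P}\tilde u-\nabla\mathbb{P}\mathbf r=-\nabla\mathbb{P}(\tilde u-\mathbf r)$, uses~\eqref{eq:littlehelper} to bound this by $c\dashint|\tilde u-\mathbf r|\,\dif y$, normalises so that $(\tilde u-\mathbf r)_{\ball_1(0)}=0$, and then applies the classical Poincar\'e inequality together with Jensen to control $A_{\alpha-1}\big(\dashint|\tilde u-\mathbf r|\,\dif y\big)$ by $\dashint A_{\alpha-1}(|\nabla(\tilde u-\mathbf r)|)\,\dif y$, which is precisely the infimum term. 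This is exactly what the paper does (their three-term splitting $\nabla u=\nabla(u-\mathbf r)-\nabla\mathbb{P}(u-\mathbf r)+\nabla\mathbb{P}u$ encodes this). Once you insert this Poincar\'e step, your argument is complete and coincides with the paper's proof.
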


\begin{proof} 
The proof is a combination of the results of \textsc{Cianchi}~\cite{Cianchi}. By scaling, it is no loss of generality to assume $x_{0}=0$ and $r=1$. Due to \cite[Thm. 3.13, Ex. 3.15]{Cianchi}, we have for all $u\in\E^{1,A_{\alpha}}(\ball_{1}(0))$ with a constant $c=c(n,\alpha)>0$
\begin{align}\label{eq:CianchiKorn}
\inf_{\mathbf{r}\in\mathscr{R}(\R^{n})}\dashint_{\ball_{1}(0)}A_{\alpha-1}(|\nabla(u-\mathbf{r})|)\dif x \leq c\bigg(1 + \dashint_{\ball_{1}(0)}A_{\alpha}(|\sg(u)|)\dif x \bigg). 
\end{align}
In the following, let $\mathbb{P}$ denote the projection on $\ball_{1}(0)$ from Section~\ref{sec:bd} to the space~$\mathscr{R}(\R^{n})$ of rigid deformations. By the equivalence of all norms on finite dimensional spaces and since $\dim(\mathscr{R}(\R^{n}))<\infty$, there exists a constant $c(n)>0$ such that 
\begin{align}\label{eq:littlehelper}
\sup_{\ball_{1}(0)}|\nabla\mathbb{P}v|\leq c \dashint_{\ball_{1}(0)}|v|\,\dif x
\end{align}
holds for all $v\in\lebe^{1}(\ball_{1}(0);\R^{n})$. Using that $A_{\alpha-1}$ is convex and doubling in the first two steps, we have for any $\mathbf{r}\in\mathscr{R}(\R^{n})$ with $(u-\mathbf{r})_{\ball_{1}(0)}=0$ 
\begin{align*}
\lefteqn{\dashint_{\ball_{1}(0)}A_{\alpha-1}(|\nabla u|)\,\dif x }\\
& \stackrel{\phantom{\eqref{eq:littlehelper}}}{\leq} c\,\dashint_{\ball_{1}(0)}A_{\alpha-1}(|\nabla(u-\mathbf{r})|)\,\dif x + c\,\dashint_{\ball_{1}(0)}A_{\alpha-1}(|\nabla(\mathbb{P}(u-\mathbf{r})|)\,\dif x + c\,\dashint_{\ball_{1}(0)}A_{\alpha-1}(|\nabla\mathbb{P}u|)\,\dif x \\    
& \stackrel{\eqref{eq:littlehelper}}{\leq} c\,\dashint_{\ball_{1}(0)}A_{\alpha-1}(|\nabla(u-\mathbf{r})|)\,\dif x + c A_{\alpha-1}\bigg(\dashint_{\ball_{1}(0)}|u-\mathbf{r}|\dif x \bigg) + c\,A_{\alpha-1}\bigg(\dashint_{\ball_{1}(0)}|u|\,\dif x \bigg) \\
& \stackrel{\phantom{\eqref{eq:littlehelper}}}{\leq} c\,\dashint_{\ball_{1}(0)}A_{\alpha-1}(|\nabla(u-\mathbf{r})|)\,\dif x + c\,\bigg(\dashint_{\ball_{1}(0)}A_{\alpha-1}(|u|)\,\dif x \bigg), 
\end{align*}
where the last inequality is valid due to the classical Poincar\'{e} inequality and the convexity of~$A_{\alpha-1}$. Since it does not matter whether the infimum in~\eqref{eq:CianchiKorn} is taken over all $\mathbf{r}\in\mathscr{R}(\R^{n})$ or over all $\mathbf{r}\in\mathscr{R}(\R^{n})$ with $(u-\mathbf{r})_{\ball_{1}(0)}=0$, we then may pass to the infimum over all $\mathbf{r}\in\mathscr{R}(\R^{n})$ with $(u-\mathbf{r})_{\ball_{1}(0)}=0$ in the previous inequality to conclude~\eqref{eq:kornllogl} for $x_{0}=0$ and $r=1$. Since every function in $\lebe^{A_{\alpha-1}}(\ball_{1}(0);\R^{n})$ automatically belongs to $\lebe^{A_{\alpha}}(\ball_{1}(0);\R^{n})$, this completes the proof. 
\end{proof} 
\subsubsection{Negative Sobolev spaces}
For $k \in \N$ we define $\W^{-k,1}(\Omega;\R^n)$ as the collection of all distributions~$T$ on~$\Omega$ of the form 
\begin{equation}\label{eq:Trex}
  T = \sum_{|\alpha| \leq k} \partial^\alpha w_\alpha
\end{equation}
with functions $w_\alpha \in \L^1(\Omega;\R^n)$ for all multi-indices $\alpha \in \N^n_0$ of length $|\alpha| \leq k$. This linear space is canonically endowed with the norm
\begin{equation*}
 \|T\|_{\W^{-k,1}(\Omega;\R^n)} \coloneqq \inf \sum_{|\alpha| \leq k} \| w_\alpha \|_{\L^1(\Omega;\R^n)},
\end{equation*}
where the infimum is taken over all functions $(w_\alpha)_{|\alpha| \leq k}$ such that the representation~\eqref{eq:Trex} holds. Endowed with this norm, $\W^{-k,1}(\Omega;\R^n)$ is a Banach space. In this paper, we only consider the cases $k\in \{1,2\}$, for which we notice the trivial embedding $\W^{-1,1}(\Omega;\R^n) \subset \W^{-2,1}(\Omega;\R^n)$ with 
\begin{equation}
\label{eq:neg_Sob_0}
\|T\|_{\W^{-2,1}(\Omega;\R^n)} \leq  \|T\|_{\W^{-1,1}(\Omega;\R^n)} \quad \text{for all } T \in \W^{-1,1}(\Omega;\R^n). 
\end{equation}
Let us further observe that for a function $w \in \L^1(\Omega;\R^n)$ we can estimate the $\W^{-k,1}$-norm of its derivative and finite difference quotient, and for $s \in \{1,\ldots,n\}$ and $h > 0$ there holds
\begin{align}
\label{eq:neg_Sob_1}
 & \| \del_s w\|_{\W^{-2,1}(\Omega;\R^n)} \leq \| w\|_{\W^{-1,1}(\Omega;\R^n)} \leq \| w\|_{\L^{1}(\Omega;\R^n)} , \\
\label{eq:neg_Sob_2}
 & \| \del_s w\|_{\W^{-1,1}(\Omega;\R^n)} \leq  \| w\|_{\L^{1}(\Omega;\R^n)} , \\
 \label{eq:neg_Sob_3} 
 & \| \Delta_{s,h} w\|_{\W^{-1,1}(\{ x \in \Omega \colon \dist (x,\partial \Omega) > h\};\R^n)} \leq  \| w\|_{\L^{1}(\Omega;\R^n)}.
\end{align}
The inequalities in~\eqref{eq:neg_Sob_1} and~\eqref{eq:neg_Sob_2} directly follow from the definition of the norm in the spaces $\W^{-2,1}(\Omega;\R^n)$ and $\W^{-1,1}(\Omega;\R^n)$, respectively, while inequality~\eqref{eq:neg_Sob_3} is obtained for a function $w \in \hold^1(\Omega;\R^n)$ from the representation 
\begin{equation*}
\Delta_{s,h} w(x) = \del_s \int_0^1 w(x+the_s) \dx{t}, 
\end{equation*}
and then follows for a general function $w \in \L^1(\Omega;\R^n)$ by approximation.  

\subsubsection{Weighted Lebesgue spaces}
Given a Radon measure $\mu$ on $\Omega$ and a finite dimensional inner product space $V$, we denote for $1\leq p \leq \infty$ the $V$-valued $\L^{p}$-space with respect to $\mu$ by $\L_{\mu}^{p}(\Omega;V)$, equipped with the canonical norm. The following lemma on the identification of pointwise and weak limits might be well-known, but we have not found a precise reference and thus state it here for the reader's convenience.

\begin{lemma}\label{lem:identification}
Let $m\in\N$ and consider $\mu=\theta\mathscr{L}^{n}\Lcorner\Omega$ for some $\theta\in\L^{1}(\Omega;\R_{\geq 1})$. Suppose that $(u_{j})_{j \in \N}$ in $\L_{\mu}^{2}(\Omega;\R^{m})$ converges 
	\begin{itemize}
		\item[\emph{(i)}] weakly in $\L_{\mu}^{2}(\Omega;\R^{m})$ to some function $u\in\L_{\mu}^{2}(\Omega;\R^{m})$, and 
		\item[\emph{(ii)}] pointwisely $\mathscr{L}^{n}$-a.e. to some measurable function $v\colon\Omega\to\R^{m}$. 
	\end{itemize}
	Then $u=v$. 
\end{lemma}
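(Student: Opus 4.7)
The plan is to link the weak and pointwise convergence of $(u_j)_{j\in\N}$ through Mazur's lemma, whose convex combinations inherit pointwise behaviour from $(u_j)$ while upgrading weak to strong convergence. As a preliminary, I would note that since $\theta\geq 1$ on $\Omega$ and $\theta\in\L^{1}(\Omega;\R_{\geq 1})$, the measures $\mu$ and $\mathscr{L}^{n}\Lcorner\Omega$ share the same null sets: $\mathscr{L}^{n}(A)=0$ trivially yields $\mu(A)=0$, whereas $\mu(A)=\int_{A}\theta\dif x=0$ together with $\theta\geq 1$ forces $\mathscr{L}^{n}(A)=0$. Consequently, properties holding $\mu$-almost everywhere and $\mathscr{L}^{n}$-almost everywhere on $\Omega$ are interchangeable.

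Next, I would apply Mazur's lemma to the weakly convergent sequence in the Banach space $\L_{\mu}^{2}(\Omega;\R^{m})$: there exist indices $N_{k}\geq k$ and non-negative coefficients $\lambda_{j}^{(k)}$ with $\sum_{j=k}^{N_{k}}\lambda_{j}^{(k)}=1$ such that the tail convex combinations
\begin{equation*}
w_{k}\coloneqq \sum_{j=k}^{N_{k}}\lambda_{j}^{(k)}\,u_{j}
\end{equation*}
converge to $u$ strongly in $\L_{\mu}^{2}(\Omega;\R^{m})$. Passing to a non-relabelled subsequence, one has $w_{k}\to u$ pointwise $\mu$-a.e., and hence $\mathscr{L}^{n}$-a.e.\ by the previous step.

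Simultaneously, assumption (ii) propagates to $(w_{k})$: at any point $x\in\Omega$ where $u_{j}(x)\to v(x)$, for every $\eps>0$ one may choose $k_{0}=k_{0}(x,\eps)$ with $|u_{j}(x)-v(x)|<\eps$ for all $j\geq k_{0}$; since every index $j$ in the sum defining $w_{k}$ satisfies $j\geq k$, one obtains for $k\geq k_{0}$
\begin{equation*}
|w_{k}(x)-v(x)|\leq \sum_{j=k}^{N_{k}}\lambda_{j}^{(k)}\,|u_{j}(x)-v(x)|<\eps,
\end{equation*}
so $w_{k}(x)\to v(x)$. Combining the two modes of convergence along the Mazur subsequence forces $u=v$ $\mathscr{L}^{n}$-almost everywhere, which is the claim.

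The main obstacle is purely conceptual: weak $\L_{\mu}^{2}$-convergence yields no immediate pointwise information, while pointwise a.e.\ convergence, in the absence of any equiintegrability hypothesis, does not uniquely identify weak limits. Mazur's lemma is precisely the device that bridges this gap, since a tail convex combination respects pointwise convergence yet simultaneously gains strong convergence, and thereby an a.e.\ convergent subsequence. The hypothesis $\theta\geq 1$ is used solely to render $\mu$-null and $\mathscr{L}^{n}$-null sets indistinguishable.
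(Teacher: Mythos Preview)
Your proof is correct and follows the same conceptual pattern as the paper's: convert the weak convergence into strong convergence via convex combinations, extract an a.e.\ convergent subsequence, and observe that (tail) convex combinations inherit pointwise convergence from the original sequence. The only genuine difference is the key lemma invoked. The paper uses the Banach--Saks theorem, available because $\L_{\mu}^{2}(\Omega;\R^{m})$ is a Hilbert space: it passes to a subsequence $(u_{j(k)})$ whose Ces\`aro means $N^{-1}\sum_{k=1}^{N}u_{j(k)}$ converge strongly to $u$, and then to a further pointwise a.e.\ convergent subsequence of these means. You instead use Mazur's lemma to produce tail convex combinations $w_{k}\in\mathrm{conv}\{u_{j}:j\geq k\}$ converging strongly. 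Your route is slightly more robust, since Mazur's lemma needs only a Banach space whereas Banach--Saks relies on the Hilbert (or at least uniformly convex) structure; in return the paper's Ces\`aro means are explicit while your coefficients $\lambda_{j}^{(k)}$ are abstract. Both arguments hinge on the same observation you make precise and the paper leaves implicit: that the convex combinations are built from \emph{tails} of the sequence, which is exactly what makes the pointwise limit $v$ survive the averaging.
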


\begin{proof}
Since $\L_{\mu}^{2}(\Omega;\R^{m})$ is a Hilbert space, the Banach--Saks theorem \cite[Ex. 5.24]{Brezis} implies the existence of a subsequence $(u_{j(k)})$ of $(u_j)$ such that the corresponding C\'{e}saro means $v_{N}\coloneqq N^{-1}\sum_{k=1}^{N}u_{j(k)}$ converge strongly in $\L_{\mu}^{2}(\Omega;\R^{m})$ to $u$. On the one hand, passing to another subsequence $(v_{N(\ell)})$ of $(v_{N})$, we can assume that $v_{N(\ell)}\to u$ pointwisely $\mu$-a.e.\ in~$\Omega$ and therefore, by our assumption $\theta \geq 1$ on the density of~$\mu$, $\mathscr{L}^{n}$-a.e.\ in~$\Omega$, as $\ell\to\infty$. On the other hand, as $u_{j}\to v$ pointwisely $\mathscr{L}^{n}$-a.e.\ in~$\Omega$, we necessarily have $v_{N(\ell)}\to v$ pointwisely $\mathscr{L}^{n}$-a.e.\ in~$\Omega$, and therefore, by uniqueness ($\mathscr{L}^{n}$-a.e.\ in~$\Omega$) of pointwise limits, we end up with $u=v$. 
\end{proof}

\subsection{(Lower semi-)continuity results}
In this section, we collect some (semi-)continuity results which will prove instrumental in the proof of Theorem~\ref{thm:main}. 

First, let $f\colon\rsym \to \R$ be convex and of linear growth from below, meaning that the first inequality in~\eqref{eq:lingrowth1} holds. Given an open and bounded set $\Omega\subset\R^{n}$, we consider $\mu\in\mathrm{RM}_{\mathrm{fin}}(\Omega;\rsym)$ and then define, based on the Lebesgue--Radon--Nikod\'{y}m decomposition~\eqref{eq:LRNdecomp} of~$\mu$, a measure $f(\mu)$ via
\begin{align}\label{eq:funofmeas}
f(\mu)(U)\coloneqq \int_{U}f\Big(\frac{\dif\mu}{\dif\mathscr{L}^{n}}\Big)\dif\mathscr{L}^{n} + \int_{U}f^{\infty}\Big(\frac{\dif\mu}{\dif|\mu^{s}|}\Big)\dif|\mu^{s}| \qquad \text{for Borel subsets } U\subset\Omega.
\end{align}
If $f$ is of linear growth also from above, meaning that also the second inequality in~\eqref{eq:lingrowth1} is satisfied, then the recession function~$f^{\infty}$ of~$f$ defined as $f^\infty(z) \coloneqq \lim_{s \to \infty} s^{-1} f(sz)$ for all $z \in \rsym$ is well-defined, $1$-homogeneous, lower-semicontinuous and convex on~$\rsym$ with values in $[\gamma,\infty)$. As a consequence, there holds $f(\mu)\in\mathrm{RM}_{\mathrm{fin}}(\Omega)$. However, we note that linear growth from above is not required in~\eqref{eq:funofmeas}, so in general we only have $f^{\infty}\colon\rsym\to\R\cup\{\infty\}$. The (semi-)continuity of functionals of the form~\eqref{eq:funofmeas} is ensured by

\begin{theorem}[Reshetnyak (lower semi-)continuity theorem]\label{lem:reshetnyak}
Let $\Omega\subset\R^{n}$ be an open and bounded set and let   $f\colon\rsym\to\R$ be a convex function of linear growth from below. For functions  $u,u_{1},u_{2},\ldots \in\bd(\Omega)$ the following statements hold: 
\begin{enumerate} 
\item If $u_{j}\stackrel{*}{\rightharpoonup}u$ in the {\textup{(}symmetric\textup{)}} weak*-sense in $\bd(\Omega)$, then we have 
\begin{align*}
f(\E u)(\Omega)\leq \liminf_{j\to\infty}f(\E u_{j})(\Omega).
\end{align*} 
\item If $u_{j}\stackrel{\langle\cdot\rangle}{\to}u$ in the {\textup{(}symmetric\textup{)}} area-strict sense in $\bd(\Omega)$, then we have 
\begin{align*}
f(\E u)(\Omega)=\lim_{j\to\infty}f(\E u_{j})(\Omega).
\end{align*} 
\end{enumerate} 
\end{theorem}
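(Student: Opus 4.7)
The plan is to reduce parts~(a) and~(b) to classical results on convex functionals of vector-valued Radon measures — Goffman--Serrin / Ambrosio--Dal Maso-type semicontinuity for~(a) and Reshetnyak's continuity theorem for~(b) — together with a $1$-homogenization trick for~(b) that handles the recession contribution via a lift to $\R\times\rsym$.

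For part~(a), I would exploit the convexity and lower-semicontinuity of $f$ to write $f=\sup_{k\in\N}g_{k}$ pointwise, with affine minorants $g_{k}(z)=\langle A_{k},z\rangle+c_{k}$, $A_{k}\in\rsym$ and $c_{k}\in\R$; such a countable representation exists as $\rsym$ is separable, and correspondingly $f^{\infty}(z)=\sup_{k\in\N}\langle A_{k},z\rangle$. For any $\varphi\in\hold_{c}(\Omega;[0,1])$ and fixed $k$, the functional
\begin{equation*}
\mu\mapsto c_{k}\int_{\Omega}\varphi\,\dif\mathscr{L}^{n}+\int_{\Omega}\varphi A_{k}\cdot\dif\mu
\end{equation*}
is continuous under weak*-convergence of Radon measures on $\Omega$, since $\varphi A_{k}\in\hold_{c}(\Omega;\rsym)$. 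Because $g_{k}\leq f$ and $g_{k}^{\infty}\leq f^{\infty}$, while the measure $f(\mu)$ is non-negative (as $f\geq\gamma|\cdot|\geq 0$), the above expression is bounded by $\int_{\Omega}\varphi\,\dif[f(\mu)]\leq f(\mu)(\Omega)$. Taking $\liminf_{j\to\infty}$ to pass from $\mu=\E u_{j}$ to $\E u$ via weak*-continuity, then letting $\varphi\nearrow\mathbf{1}_{\Omega}$ (dominated convergence against the finite measure $\E u$), and finally upgrading from affine to simple-function minorants through a measurable partition of $\Omega$ followed by monotone convergence, yields $f(\E u)(\Omega)\leq\liminf_{j\to\infty}f(\E u_{j})(\Omega)$.

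For part~(b), I would use a lifting trick. Associate to each $\mu\in\mathrm{RM}_{\mathrm{fin}}(\Omega;\rsym)$ the lifted measure $\sigma\mu\coloneqq(\mathscr{L}^{n}\Lcorner\Omega,\mu)\in\mathrm{RM}_{\mathrm{fin}}(\Omega;\R\times\rsym)$ and introduce the positively $1$-homogeneous convex envelope
\begin{equation*}
\tilde f(t,z)\coloneqq \begin{cases} t\,f(z/t) & \text{if } t>0,\\ f^{\infty}(z) & \text{if } t=0, \end{cases}
\end{equation*}
so that $f(\mu)(\Omega)=\tilde f(\sigma\mu)(\Omega)$. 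A direct polar-decomposition calculation gives $|\sigma\E u|(\Omega)=\int_{\Omega}\sqrt{1+|\mathscr{E}u|^{2}}\,\dif\mathscr{L}^{n}+|\E^{s}u|(\Omega)=\langle\E u\rangle(\Omega)$, so area-strict convergence $u_{j}\stackrel{\langle\cdot\rangle}{\to}u$ translates into strict convergence of the lifts $\sigma\E u_{j}\to\sigma\E u$. Provided $f$ additionally has linear growth from above, $\tilde f$ is continuous and of linear growth on $[0,\infty)\times\rsym$, and the classical Reshetnyak continuity theorem for continuous $1$-homogeneous integrands under strict convergence of measures gives $\tilde f(\sigma\E u_{j})(\Omega)\to\tilde f(\sigma\E u)(\Omega)$, which is precisely~(b).

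The principal obstacle is that without a linear upper bound on $f$, the envelope $\tilde f$ is generally discontinuous on $\{t=0\}$ (taking the value $+\infty$ wherever $f^{\infty}$ does), so the classical continuity theorem does not apply directly. A natural rescue is to approximate $f$ from below by the Moreau--Yosida regularisations $f_{N}\coloneqq f\,\square\,(N|\cdot|)$, which are $N$-Lipschitz and hence have linear upper growth of slope~$N$; Reshetnyak continuity then applies to each $f_{N}$, and the delicate step is reconciling the double limit $j\to\infty$ with $N\to\infty$, using the lsc from~(a) to control the lower side and the a-priori finiteness $f(\E u)(\Omega)<\infty$ to control the upper side. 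A secondary technical point in~(a) is the measurable-partition step converting countably many affine minorants into simple-function approximations of $f$, which is routine but requires care with the disjointness of the absolutely continuous and singular parts of~$\E u$.
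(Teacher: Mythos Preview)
The paper does not give its own proof of this theorem; it simply cites it as a classical result of Reshetnyak, with a pointer to \textsc{Beck--Schmidt} for the area-strict formulation. Your approach --- supremum of countably many affine minorants for~(a), and the lifting $\sigma\mu=(\mathscr{L}^{n}\Lcorner\Omega,\mu)$ together with the $1$-homogeneous envelope $\tilde f$ for~(b) --- is precisely the standard route taken in those references, and is correct for~(a) in full and for~(b) under the additional hypothesis of linear growth from above.

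Your hesitation about~(b) in the absence of linear growth from above is in fact decisive: part~(b) as literally stated is \emph{false} in that generality, so your Moreau--Yosida double-limit repair cannot close the upper bound. A one-dimensional counterexample is given by $\Omega=(0,1)$, $u\equiv 0$, $f(t)=|t|+t^{2}$ (convex, finite-valued, with $f(t)\geq |t|$), and $u_{j}(x)=jx$ on $(0,1/j^{2})$, $u_{j}(x)=1/j$ on $[1/j^{2},1)$. Then $u_{j}\to 0$ in $\lebe^{1}$ and
\begin{align*}
\langle\E u_{j}\rangle(\Omega)=\frac{\sqrt{1+j^{2}}}{j^{2}}+1-\frac{1}{j^{2}}\longrightarrow 1=\langle\E 0\rangle(\Omega),
\end{align*}
so $u_{j}\stackrel{\langle\cdot\rangle}{\to}0$, yet $f(\E u_{j})(\Omega)=1/j+1\to 1\neq 0=f(\E 0)(\Omega)$. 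The paper only invokes~(b) through Corollary~\ref{cor:reshetnyak}, where the two-sided linear growth~\eqref{eq:lingrowth1} is in force, so this imprecision in the hypotheses is harmless for the paper's purposes. Part~(a), which the paper also applies to the superlinear $\Phi(z)=|z|\log^{2}(1+|z|^{2})$ in Step~3 of the proof of Theorem~\ref{thm:main}, does hold with only linear growth from below, and your argument there is sound.
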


This theorem appears as a special case of the results of \textsc{Reshetnyak}~\cite{Reshetynak} (also see~\cite{BeckSchmidt}). If the boundary~$\partial\Omega$ is moreover Lipschitz, one may extend functions $v\in\bd(\Omega)$ by a fixed map $\widetilde{v}\in\bd(\R^{n})$ to all of~$\R^{n}$ to obtain an element of~$\bd(\R^{n})$. In conjunction with~\eqref{eq:BDLNR}, Theorem~\ref{lem:reshetnyak} then yields the following result: 

\begin{corollary}\label{cor:reshetnyak}
Let $\Omega\subset\R^{n}$ be an open and bounded set with Lipschitz boundary. Moreover, let $f\colon\rsym\to\R$ be a convex function of linear growth satisfying~\eqref{eq:lingrowth1}, and define its recession function by~\eqref{eq:recfunc}. Then for every $u_{0}\in\bd(\Omega)$, the functional 
\begin{align*}
\overline{F}_{u_{0}}[u;\Omega] \coloneqq \int_{\Omega} f(\mathscr{E}u)\,\dif x + \int_{\Omega} f^\infty\left(\frac{\mathrm{dE}u}{\mathrm{d}\abs{\mathrm{E}^su}}\right)\!\dif{\abs{\mathrm{E}^su}} + \int_{\partial\Omega} f^\infty(\mathrm{tr}_{\partial\Omega}(u_{0}-u)\odot\nu_{\del\Omega})\dx{\Hd^{n-1}}.
\end{align*}
as in~\eqref{eq:WeakStarRelaxation} is lower semicontinuous with respect to {\textup{(}symmetric\textup{)}} weak*-convergence in $\bd(\Omega)$ and continuous with respect to \textup{(}symmetric\textup{)} area-strict convergence in $\bd(\Omega)$. 
\end{corollary}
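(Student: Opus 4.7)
My plan is to reduce both claims to Theorem~\ref{lem:reshetnyak} applied on a larger Lipschitz domain $\Omega'\Supset\Omega$, on which the boundary term of $\overline{F}_{u_0}$ is absorbed into a bulk singular measure via a jump construction. Using the extension operator $\mathfrak{J}$ from Lemma~\ref{lem:ext}, I would fix $\tilde u_0 \coloneqq \mathfrak{J}u_{0} \in \bd(\R^n)$ supported in $\Omega'$, and for $v \in \bd(\Omega)$ set $\bar v \coloneqq v$ on $\Omega$ and $\bar v \coloneqq \tilde u_0$ on $\Omega'\setminus\Omega$. The standard $\bd$ integration-by-parts formula at $\partial\Omega$ then yields
\begin{align*}
\E \bar v = \E v \Lcorner \Omega + \E\tilde u_0 \Lcorner (\Omega'\setminus\overline\Omega) + \bigl(\trace_{\partial\Omega}(u_0 - v)\odot\nu_{\partial\Omega}\bigr)\mathscr{H}^{n-1}\Lcorner\partial\Omega,
\end{align*}
which, combined with~\eqref{eq:funofmeas} and the $1$-homogeneity of $f^\infty$, gives the key identity
\begin{align*}
\overline F_{u_0}[v;\Omega] = f(\E\bar v)(\Omega') - f(\E\tilde u_0)(\Omega'\setminus\overline\Omega),
\end{align*}
where the last summand is independent of $v$. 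Both assertions thus reduce to the corresponding (semi)continuity property of the map $v\mapsto f(\E\bar v)(\Omega')$ on $\bd(\Omega')$.

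For part (a), given $u_j \stackrel{*}{\rightharpoonup} u$ in $\bd(\Omega)$, the extensions satisfy $\bar u_j \to \bar u$ strongly in $L^1(\Omega')$, since they coincide with $\tilde u_0$ on $\Omega'\setminus\Omega$. Uniform boundedness of $(|\E u_j|(\Omega))_{j\in\N}$ together with continuity of $\trace_{\partial\Omega}\colon\bd(\Omega)\to L^1(\partial\Omega;\R^n)$ implies boundedness of $(|\E\bar u_j|(\Omega'))_{j\in\N}$; along a subsequence, $\E\bar u_j \stackrel{*}{\rightharpoonup} \mu$ in $\mathrm{RM}_\mathrm{fin}(\Omega';\rsym)$. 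Testing against $\varphi\in\hold_c^\infty(\Omega';\rsym)$ and using the $L^1$-convergence identifies $\mu=\E\bar u$ distributionally and hence as measures. Thus $\bar u_j\stackrel{*}{\rightharpoonup}\bar u$ in $\bd(\Omega')$, and Theorem~\ref{lem:reshetnyak}(a) applied on $\Omega'$ followed by subtraction of the $v$-independent constant yields the lower semicontinuity, with a standard subsequence-of-subsequence argument removing the initial extraction.

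For part (b), the strategy is to upgrade area-strict convergence in $\bd(\Omega)$ to area-strict convergence of the extensions in $\bd(\Omega')$. Since area-strict convergence implies strict convergence on $\Omega$, classical trace theory for $\bd$ on Lipschitz domains (cf.~\cite{TemamStrang}) yields $\trace_{\partial\Omega}u_j \to \trace_{\partial\Omega}u$ in $L^1(\partial\Omega;\R^n)$. Using the additive decomposition
\begin{align*}
\langle\E\bar u_j\rangle(\Omega') = \langle\E u_j\rangle(\Omega) + \langle\E\tilde u_0\rangle(\Omega'\setminus\overline\Omega) + \int_{\partial\Omega}\bigl|\trace_{\partial\Omega}(u_j - u_0)\odot\nu_{\partial\Omega}\bigr|\dif\mathscr{H}^{n-1},
\end{align*}
the first summand converges by hypothesis, the second is $j$-independent, and the third converges by trace continuity together with dominated convergence. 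Hence $\bar u_j \stackrel{\langle\cdot\rangle}{\to}\bar u$ in $\bd(\Omega')$, and Theorem~\ref{lem:reshetnyak}(b) combined with the key identity gives the claim.

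The main obstacle I expect is the delicate passage of convergences through the boundary: neither weak*- nor area-strict convergence in $\bd(\Omega)$ is compatible with $\trace_{\partial\Omega}$ in a naive manner, so the weak*/area-strict convergence of the extensions on $\Omega'$ is not automatic. In part (a) this is circumvented by identifying the weak* limit of $\E\bar u_j$ a posteriori via the $L^1$-convergence of $\bar u_j$; in part (b), one crucially exploits that area-strict convergence in $\bd(\Omega)$ implies strict convergence and therefore $L^1$-convergence of traces on Lipschitz domains, which is just strong enough to drive the boundary term through the limit.
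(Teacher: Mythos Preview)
Your proof is correct and follows precisely the approach sketched in the paper: extend every competitor by a fixed $\bd$-map outside $\Omega$ so that the boundary term is absorbed as a singular part on $\partial\Omega$, and then apply Theorem~\ref{lem:reshetnyak} on the enlarged domain. The paper only states this in one sentence, whereas you supply the details (the key identity, the identification of the weak*-limit of the extensions via $L^1$-convergence in part~(a), and the use of strict continuity of the $\bd$-trace to pass the boundary integral to the limit in part~(b)); the arguments you give are the standard ones and require no modification.
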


\begin{remark}[$\bd$-minimizers and consistency]
\label{rem:consistency_proof} Based on Corollary~\ref{cor:reshetnyak}, we  now comment on the existence of $\bd$-minimizers and the consistency result~\eqref{eq:consistency}. Recalling the compactness result that on a bounded domain~$\Omega$ with Lipschitz boundary every bounded sequence in~$\bd(\Omega)$ contains a weakly* convergent sequence in $\bd(\Omega)$, we obtain the existence of a minimizer of $\overline{F}_{u_{0}}[\cdot;\Omega]$ in $\bd(\Omega)$ by the lower semicontinuity part of Corollary~\ref{cor:reshetnyak}. Concerning~\eqref{eq:consistency}, the inequality ``$\leq$'' is obvious, while the reverse inequality ``$\geq$'' follows from the continuity part of Corollary~\ref{cor:reshetnyak} in conjunction with the (symmetric) area-strict approximation result from Lemma~\ref{lem:SAP}.
\end{remark}



Secondly, we record a lower semicontinuity result for functionals extended from Dirichlet classes to negative Sobolev spaces by infinity. Up to obvious modifications, so the treatment of lower order terms, the proof is identical to {\cite[Lem. 2.6]{Gmeineder}}.

\begin{lemma}\label{lem:EkelandLSC}
Let $\Omega\subset\R^{n}$ be an open and bounded set with Lipschitz boundary, $1<q<\infty$ and $k\in\N$. Moreover, assume that $\mathbf{f}_{1}\colon\rsym\to\R_{\geq 0}$ is a convex function that satisfies $c^{-1}|z|^{q}\leq \mathbf{f}_{1}(z)\leq c(1+|z|^{q})$ for some $c>0$ and all $z\in\rsym$, and that $\mathbf{f}_{2}\colon\R^{n}\to\R_{\geq 0}\cup\{+\infty\}$ is a lower semicontinuous function. Then, for every $u_{0}\in\W^{1,q}(\Omega;\R^{n})$, the functional 
	\begin{align*}
	\mathcal{F}[u] \coloneqq \begin{cases} \displaystyle \int_{\Omega} \big( \mathbf{f}_{1}(\eps(u))+\mathbf{f}_{2}(u) \big)\dx{x}&\;\text{if}\;u\in \mathscr{D}_{u_{0}}\coloneqq u_{0}+\ld_{0}(\Omega),\\
	+\infty&\;\text{if}\;u\in\W^{-k,1}(\Omega;\R^{n})\setminus\mathscr{D}_{u_{0}}
	\end{cases}
	\end{align*}
	is lower semicontinuous with respect to the norm topology on $\W^{-k,1}(\Omega;\R^{n})$. 
\end{lemma}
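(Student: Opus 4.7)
The plan is to take a sequence $(u_j)_{j\in\N}\subset\W^{-k,1}(\Omega;\R^{n})$ with $u_j\to u$ in $\W^{-k,1}$, select a subsequence realizing $\liminf_j \mathcal{F}[u_j]$, and assume this value is finite (otherwise the conclusion is trivial). Along this subsequence each $u_j\in\mathscr{D}_{u_0}$, and both $\int_\Omega \mathbf{f}_1(\eps(u_j))\,\dif x$ and $\int_\Omega \mathbf{f}_2(u_j)\,\dif x$ are uniformly bounded.

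First I would use the coercivity $|z|^q \leq c\,\mathbf{f}_1(z)$ to obtain a uniform $\L^q$-bound for $\eps(u_j)$. Since $u_j-u_0\in\ld_0(\Omega)$ and $1<q<\infty$, Korn's inequality on $\W_0^{1,q}$ upgrades this to a uniform $\W^{1,q}$-bound on $u_j-u_0$, and hence on $u_j$. By reflexivity of $\W^{1,q}$ and the Rellich--Kondrachov theorem, a further subsequence (not relabelled) satisfies $u_j \rightharpoonup v$ weakly in $\W^{1,q}(\Omega;\R^{n})$, strongly in $\L^q(\Omega;\R^{n})$, and pointwise $\Leb^n$-a.e.\ in $\Omega$. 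Since $\Omega$ is bounded, $\L^q(\Omega;\R^{n})\hookrightarrow\L^1(\Omega;\R^{n})\hookrightarrow\W^{-k,1}(\Omega;\R^{n})$ continuously, so $u_j\to v$ in $\W^{-k,1}$ as well; by assumption $u_j\to u$ in $\W^{-k,1}$, whence $u=v\in\W^{1,q}(\Omega;\R^{n})$. As $\ld_0(\Omega)$ contains $\W_0^{1,q}(\Omega;\R^{n})$ and the latter is weakly closed in $\W^{1,q}$, we conclude $u-u_0\in\ld_0(\Omega)$, i.e.\ $u\in\mathscr{D}_{u_0}$.

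Now, the symmetric gradients satisfy $\eps(u_j)\rightharpoonup\eps(u)$ weakly in $\L^q(\Omega;\rsym)$. Convexity and nonnegativity of $\mathbf{f}_1$ then give, by the classical lower semicontinuity theorem for convex integral functionals on $\L^q$,
\begin{equation*}
\int_\Omega \mathbf{f}_1(\eps(u))\,\dif x \;\leq\; \liminf_{j\to\infty}\int_\Omega \mathbf{f}_1(\eps(u_j))\,\dif x.
\end{equation*}
For the zeroth-order term, the pointwise a.e.\ convergence $u_j\to u$ combined with the lower semicontinuity of $\mathbf{f}_2$ yields $\mathbf{f}_2(u)\leq\liminf_j \mathbf{f}_2(u_j)$ a.e.\ in $\Omega$; since $\mathbf{f}_2\geq 0$, Fatou's lemma gives
\begin{equation*}
\int_\Omega \mathbf{f}_2(u)\,\dif x \;\leq\; \liminf_{j\to\infty}\int_\Omega \mathbf{f}_2(u_j)\,\dif x.
\end{equation*}
Adding the two inequalities and using the subadditivity of $\liminf$ delivers $\mathcal{F}[u]\leq \liminf_j \mathcal{F}[u_j]$, which is the desired lower semicontinuity.

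The only non-routine point is the identification of the weak $\W^{1,q}$-limit of the extracted subsequence with the $\W^{-k,1}$-limit $u$; this is the crucial step that links the strong (and rather weak) $\W^{-k,1}$-convergence, under which $\mathcal{F}$ is declared $+\infty$ off $\mathscr{D}_{u_0}$, with the weak $\W^{1,q}$-compactness enforced by the coercivity of $\mathbf{f}_1$. Once this identification is secured via the continuous embedding $\L^q\hookrightarrow\W^{-k,1}$ on bounded $\Omega$, the remainder of the argument is a straightforward combination of standard weak lower semicontinuity for convex integrands and Fatou's lemma.
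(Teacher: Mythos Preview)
Your proof is correct and follows the standard route that the paper defers to \cite[Lem.~2.6]{Gmeineder}, with the lower order term $\mathbf{f}_{2}$ handled via Fatou's lemma exactly as the paper's ``obvious modifications'' remark indicates. One terminological slip: the inequality $\liminf a_j + \liminf b_j \leq \liminf(a_j+b_j)$ you invoke at the end is the \emph{superadditivity} of $\liminf$, not its subadditivity.
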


\subsection{The Ekeland variational principle} 
As alluded to in the introduction, in order to deal with possible non-uniqueness phenomena of $\bd$-minimizers we use the Ekeland variational principle~\cite{Ekeland} in the proof of Theorem~\ref{thm:main}. We give here a version that is tailored to our purposes in Section~\ref{sec:main} below:

\begin{proposition}[Ekeland variational principle, {\cite{Ekeland}, \cite[Thm. 5.6, Rem. 5.5]{Giusti}}]\label{prop:Ekeland}
	Let $(X,d)$ be a complete metric space and let $\mathcal{F}\colon X\to\R\cup\{\infty\}$ be a lower semicontinuous function with respect to the metric topology which is bounded from below and not identically $+\infty$. Suppose that, for some $u\in X$ and some $\varepsilon>0$, there holds $\mathcal{F}[u]\leq \inf_{X} \mathcal{F}+\varepsilon$. Then there exists $v\in X$ such that 
	\begin{enumerate}
		\item $d(u,v)\leq \sqrt{\varepsilon}$,
		\item $\mathcal{F}[v]\leq \mathcal{F}[u]$, 
		\item for all $w\in X$ there holds $\mathcal{F}[v]\leq \mathcal{F}[w] + \sqrt{\varepsilon}d(v,w)$. 
	\end{enumerate}
\end{proposition}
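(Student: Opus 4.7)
The plan is the classical iterative-selection proof via a decreasing $\preceq$-chain. I would introduce the partial order on $X$ defined by
\[
w \preceq y \;:\Longleftrightarrow\; \mathcal{F}[w] + \sqrt{\varepsilon}\, d(y,w) \leq \mathcal{F}[y],
\]
then check that it is reflexive, transitive (via the triangle inequality), and antisymmetric (by adding the defining inequalities for $w \preceq y$ and $y \preceq w$, which forces $d(y,w)=0$). Since $\mathcal{F}$ is bounded below, $m_{y} \coloneqq \inf\{\mathcal{F}[w] : w \preceq y\}$ is finite for every $y \in X$.

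Starting from $u_{0}\coloneqq u$, I would inductively select $u_{n+1}$ with $u_{n+1} \preceq u_{n}$ and $\mathcal{F}[u_{n+1}] \leq m_{u_{n}} + 2^{-n}$. For $m \geq n+1$, transitivity gives $u_{m} \preceq u_{n+1}$, hence
\[
\sqrt{\varepsilon}\, d(u_{n+1}, u_{m}) \leq \mathcal{F}[u_{n+1}] - \mathcal{F}[u_{m}].
\]
Since $(\mathcal{F}[u_{n}])$ is nonincreasing and bounded below, the right-hand side tends to $0$, so $(u_{n})$ is Cauchy; by completeness of $(X,d)$ it converges to some $v\in X$. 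Passing $m\to\infty$ in the inequality $\mathcal{F}[u_{m}] + \sqrt{\varepsilon}\, d(u_{n},u_{m}) \leq \mathcal{F}[u_{n}]$, and invoking the lower semicontinuity of $\mathcal{F}$ together with the continuity of $d$, yields $v \preceq u_{n}$ for every $n$. Specializing to $n=0$ and inserting the hypothesis $\mathcal{F}[u] \leq \inf_{X}\mathcal{F} + \varepsilon$ gives at once $\mathcal{F}[v] \leq \mathcal{F}[u]$ and $\sqrt{\varepsilon}\, d(u,v) \leq \mathcal{F}[u] - \mathcal{F}[v] \leq \varepsilon$, i.e., conclusions~(b) and~(a).

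The remaining assertion~(c) is the subtlest point and amounts to showing that $v$ is $\preceq$-minimal. Given any $w \preceq v$, transitivity yields $w \preceq u_{n+1}$ for all $n$, so $\mathcal{F}[w] \geq m_{u_{n}} \geq \mathcal{F}[u_{n+1}] - 2^{-n}$; letting $n\to\infty$ and comparing with $\mathcal{F}[v] \leq \lim_{n}\mathcal{F}[u_{n}]$ (obtained from $v \preceq u_{n}$) gives $\mathcal{F}[w] \geq \mathcal{F}[v]$. Inserting this into the defining inequality for $w \preceq v$ forces $\sqrt{\varepsilon}\, d(v,w) \leq 0$, hence $w=v$. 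Thus no $w\neq v$ can satisfy $w \preceq v$, which is precisely property~(c) (for $w=v$ the inequality is trivial). The main structural obstacle is the orchestration of three ingredients — the iterative selection with geometric slack $2^{-n}$, the passage to the limit combining completeness with lower semicontinuity, and the propagation of below-bounds through transitivity — so that neither the Cauchy estimate nor the final minimality argument can be circumvented.
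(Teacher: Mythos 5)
The paper does not prove this proposition; it is stated as a cited classical result (Ekeland's original paper and Giusti's textbook, Theorem~5.6 with Remark~5.5), so there is no in-paper argument to compare against. Your proof is the standard iterative-selection argument — introduce the Bishop--Phelps--type partial order $w \preceq y \Leftrightarrow \mathcal{F}[w]+\sqrt{\varepsilon}\,d(y,w)\leq\mathcal{F}[y]$, build a $\preceq$-decreasing chain with geometric slack $2^{-n}$, show it is Cauchy via the monotone convergence of $(\mathcal{F}[u_n])$, pass to the limit using completeness and lower semicontinuity to get $v \preceq u_n$ for all $n$, read off (a) and (b) from $v\preceq u_0$, and obtain (c) by showing $v$ is $\preceq$-minimal. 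This is essentially the proof found in the cited reference and is correct; the only implicit point worth making explicit is that the hypothesis $\mathcal{F}[u]\leq\inf_X\mathcal{F}+\varepsilon$ guarantees $\mathcal{F}[u]<\infty$, so the whole chain lives in the effective domain of $\mathcal{F}$ and the subtraction arguments and the antisymmetry-style conclusion $d(v,w)=0$ are legitimate.
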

\subsection{Miscellaneous bounds} We conclude this section by recording two estimates as follows:

\begin{lemma}[{\cite[Lem. 5.2]{Giusti}, \cite[Lem. 2.8]{BeckSchmidt}}]\label{lem:boundbelow}
	Suppose that $f\colon\rsym\to\R$ is a convex $\hold^1$-function satisfying~\eqref{eq:lingrowth1}. Then we have the following statements:
	\begin{enumerate}
        \item\label{item:boundbelow2} For all $z\in\rsym$ there holds $|\nabla f(z)|\leq \Gamma$. In particular, we have $\Lip(f)\leq \Gamma$.
		\item\label{item:boundbelow1} For all $z\in\rsym$ there holds $\langle \nabla f(z),z\rangle \geq \gamma |z|-\Gamma$. 
	\end{enumerate}
\end{lemma}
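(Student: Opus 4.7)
\medskip

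The plan is to derive both statements directly from the subgradient inequality for convex $\hold^1$-functions, namely
\begin{equation*}
f(y) \geq f(z) + \langle \nabla f(z), y - z\rangle \qquad \text{for all } y,z \in \rsym,
\end{equation*}
combined with the two-sided growth bound~\eqref{eq:lingrowth1}.

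For part~\ref{item:boundbelow2}, fix $z \in \rsym$ and assume $\nabla f(z)\neq 0$ (otherwise there is nothing to show). Test the subgradient inequality against the shifted point $y = z + t\, \nabla f(z)/|\nabla f(z)|$ for arbitrary $t>0$. On the left-hand side, the upper growth bound in~\eqref{eq:lingrowth1} yields
\begin{equation*}
f(y) \leq \Gamma\bigl(1 + |z| + t\bigr),
\end{equation*}
while on the right-hand side, the lower growth bound in~\eqref{eq:lingrowth1} gives $-f(z)\leq -\gamma |z|$. Together these furnish
\begin{equation*}
t\,|\nabla f(z)| \;\leq\; \Gamma\bigl(1+|z|+t\bigr) - \gamma|z|.
\end{equation*}
Dividing by $t$ and sending $t\to\infty$ collapses the lower-order terms and leaves $|\nabla f(z)|\leq \Gamma$. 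The Lipschitz assertion $\Lip(f)\leq \Gamma$ is then a routine consequence via integration along line segments.

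For part~\ref{item:boundbelow1}, it suffices to apply the subgradient inequality at $y=0$ with base point $z$, which gives
\begin{equation*}
\langle \nabla f(z), z\rangle \;\geq\; f(z) - f(0).
\end{equation*}
The lower bound in~\eqref{eq:lingrowth1} yields $f(z)\geq \gamma|z|$, while the upper bound (applied at $0$) yields $f(0)\leq \Gamma$; combining these produces $\langle \nabla f(z),z\rangle \geq \gamma|z|-\Gamma$, as desired.

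No step here is genuinely hard: the only mild care needed is in choosing the correct test direction for~\ref{item:boundbelow2} so that the bilinear term grows linearly in $t$ and the opposing upper growth bound $\Gamma(1+|z|+t)$ absorbs it sharply, and in noting that the constant~$\Gamma$ on the right-hand side of~\ref{item:boundbelow1} comes for free from evaluating the upper bound at the origin. Both parts are purely algebraic once the subgradient inequality and~\eqref{eq:lingrowth1} are in hand.
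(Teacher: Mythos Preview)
Your proof is correct. Both parts follow cleanly from the subgradient inequality combined with the two-sided growth bound~\eqref{eq:lingrowth1}, exactly as you argue. Note that the paper itself does not supply a proof of this lemma but merely cites \cite[Lem.~5.2]{Giusti} and \cite[Lem.~2.8]{BeckSchmidt}; your argument is the standard one underlying those references and there is nothing to compare against in the paper proper.
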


\section{Gradient integrability and the proof of Theorem~\ref{thm:main}}\label{sec:main}
We now give the proof of Theorem~\ref{thm:main}. To this end, it is convenient to use the notation 
\begin{align*}
\lebe_{\leq t}^{\infty}(\Omega;\R^{n}) \coloneqq \{w\in\lebe^{\infty}(\Omega;\R^{n}) \colon \|w\|_{\lebe^{\infty}(\Omega;\R^{n})}\leq t\} \qquad \text{for }  t >0.
\end{align*}
\subsection{The Ekeland-type viscosity approximation}
\label{section:viscosity_approximation}
Towards the claim of Theorem~\ref{thm:main}, which is a local result, let $u\in\bd_{\locc}(\Omega)\cap\lebe_{\locc}^{\infty}(\Omega;\R^{n})$ be a local $\bd$-minimizer of $F$. In particular, on each relatively compact subset $U\Subset\Omega$ with Lipschitz boundary $\partial U$, $u|_{U}\in\bd(U)\cap\lebe^{\infty}(U;\R^{n})$ is a $\bd$-minimizer of $F$ on $U$ with respect to its own boundary values. 

In view of Theorem \ref{thm:main}, we may therefore directly assume that 
\begin{align}\label{eq:uAss}
	u = u_{0} \in\L^{\infty}(\Omega;\R^{n})\cap\bd(\Omega)\;\;\;\text{and define}\;\;\; m \coloneqq \|u\|_{\L^{\infty}(\Omega;\R^{n})}.
\end{align}
We then proceed in three steps. As outlined in the introduction, we must simultaneously keep track of the $\L^{\infty}$-constraint within the Ekeland approximation scheme, and get access to the corresponding algebraic manipulations which let us solely work with symmetric gradients. This necessitates a very careful approximation procedure to be displayed in Step 1. In the subsequent  Step 2, we introduce the corresponding stabilised and $\lebe^{\infty}$-penalising integrands, finally letting us come up with the requisite Ekeland competitors for the main part of the proof in Step 3. 



\emph{Step 1. Construction of regular minimising sequences to regularised boundary values.} In view of Lemma~\ref{lem:SAP}, there exist a constant $C_A=C_A(\Omega,n)>0$ and a sequence $(u_{j})_{j\in\N}$ in $\L_{\leq C_A m}^{\infty}(\Omega;\R^{n})\cap (u_{0}+\hold_{c}^{\infty}(\Omega;\R^{n}))$  such that 
	\begin{align}\label{eq:SASTconv}
		u_j \to u \quad\mbox{in the (symmetric) area-strict topology on }\BD(\Omega). 
	\end{align}
For later purposes, we record that  
\begin{align}\label{eq:beautifulworld}
	  \|u_{j}-u_{0}\|_{\L^{\infty}(\Omega;\R^{n})}\leq (C_A+1)m
\end{align} 	
with~$m$ being introduced in~\eqref{eq:uAss} above.
By Corollary~\ref{cor:reshetnyak}, $\overline{F}_{u_{0}}[-;\Omega]$ is continuous with respect to the (symmetric) area-strict topology in $\bd(\Omega)$. Therefore, the consistency relation~\eqref{eq:consistency} implies that 
\begin{align*}
\lim_{j \to \infty} F[u_{j};\Omega] = \lim_{j \to \infty} \overline{F}_{u_{0}}[u_{j};\Omega] = \overline{F}_{u_{0}}[u;\Omega]=\min_{\bd(\Omega)}\overline{F}_{u_{0}}[-;\Omega] \stackrel{\eqref{eq:consistency}}{=}\inf_{\mathscr{D}_{u_{0}}}F[-;\Omega],
\end{align*}
meaning that $(u_j)_{j \in \N}$ is a minimizing sequence for $F[-;\Omega]$ in $\mathscr{D}_{u_{0}}$.
Hence, passing to a non-relabelled subsequence if required, we may assume 
\begin{align}
\label{eq:minizing_sequence_initial}
F[u_j;\Omega] \leq \inf_{\Di_{u_0}} F[-;\Omega] + \frac{1}{8j^2}\qquad\text{for all}\;j\in\N.
\end{align}
For an open and bounded set~$\Omega_0$ with $\Omega\Subset \Omega_0$, we recall the extension operator $\mathfrak{J}\colon\LD(\Omega)\to\LD_{0}(\Omega_0)$ from Lemma~\ref{lem:ext}. In particular, $\mathfrak{J}\colon \LD(\Omega)\cap\L^{\infty}(\Omega;\R^{n})\to\L^{\infty}(\Omega_0;\R^{n})$ is a bounded linear operator with respect to the $\L^{\infty}$-norm. We denote the corresponding operator norm by 
	\begin{align*}
		\|\mathfrak{J}\|_\infty \coloneqq \sup\{\|\mathfrak{J}v\|_{\L^{\infty}(\Omega_0;\R^{n})}\colon v\in\LD(\Omega)\cap \L^{\infty}_{\leq 1}(\Omega;\R^{n})\}
	\end{align*} 
	and define $\overline{u}_{0}\coloneqq \mathfrak{J}u_{0}$. We next consider a radial standard mollifier $\varrho\in\C_{c}^{\infty}(\B_{1}(0);[0,1])$  with $\|\varrho\|_{\L^{1}(\B_{1}(0))}=1$ and set, for $\varepsilon>0$, $\varrho_{\varepsilon}(x)\coloneqq \varepsilon^{-n}\varrho(\frac{x}{\varepsilon})$. For each $j\in\N$, we then choose 
	$\varepsilon_{j}>0$ such that the mollification $u_{j}^{\partial\Omega}\coloneqq (\varrho_{\varepsilon_{j}}*\overline{u}_{0})|_{\Omega}$ via convolution satisfies
	\begin{align}\label{eq:verycarefulapproximation}
	\begin{split}
		&\|u_{j}^{\partial\Omega}-u_{0}\|_{\LD(\Omega)}\leq \frac{1}{8\Lip(f)j^{2}},\\
		&\|u_{j}^{\partial\Omega}\|_{\L^{\infty}(\Omega;\R^{n})}\leq \|\mathfrak{J}\|_{\infty} \|u_{0}\|_{\L^{\infty}(\Omega;\R^{n})}=\|\mathfrak{J}\|_\infty m
		\end{split}
	\end{align} 
	seriatim. Having constructed the sequence $(u_{j}^{\partial\Omega})_{j\in\N}$ in $ (\W^{1,n+1}\cap{\L}{_{\leq \|\mathfrak{J}\|_\infty m}^{\infty}})(\Omega;\R^{n})$, we then define approximate Dirichlet classes by 
	\begin{equation*}
	\mathscr{D}_{j}\coloneqq u_{j}^{\partial\Omega}+ \ld_0(\Omega).
	\end{equation*}
	For $\widetilde{u}_{j}\coloneqq u_j-u_0 + u_{j}^{\partial\Omega} \in\mathscr{D}_{j}$ we then conclude on the one hand via~\eqref{eq:verycarefulapproximation}$_1$
	\begin{equation}\label{eq:LDdistanceMollification}
			\|\widetilde{u}_j - u_j\|_{\LD(\Omega)} 
			=
			\|u_j^{\del\Omega} - u_0\|_{\LD(\Omega)}
			\leq \frac{1}{8\Lip(f)j^2}, 
    \end{equation}
	and on the other hand via~\eqref{eq:beautifulworld} and~\eqref{eq:verycarefulapproximation}$_{2}$  
		\begin{align}\label{eq:veryimportantLinftybounds}
			\|\widetilde{u}_{j}\|_{\L^{\infty}(\Omega;\R^{n})}\leq \|u_j-u_0\|_{\L^{\infty}(\Omega;\R^{n})} + \|u_{j}^{\partial\Omega}\|_{\L^{\infty}(\Omega;\R^{n})} \leq (1+C_A+\|\mathfrak{J}\|_\infty )m \eqqcolon M
		\end{align}
	for all $j\in\N$. We next show that the infimum of $F[-;\Omega]$ in the Dirichlet class $\mathscr{D}_{u_0}$ is approximated by the corresponding one in the Dirichlet class~$\mathscr{D}_{j}$, which in turn is almost attained for the function~$\widetilde{u}_{j}$. To this end, we first notice 
	\begin{equation*}
    \label{eq:F_difference}
     |F[v;\Omega] - F[w;\Omega] | \leq \Lip(f)\|\eps(v-w)\|_{\L^1(\Omega;\R^{n \times n})} \qquad \text{for all } v,w \in \ld(\Omega).
	\end{equation*}
    For any function $\varphi\in \ld_{0}(\Omega)$ we hence obtain the inequalities
    \begin{align*}
     & F[u_0+\varphi;\Omega] \leq F[u_j^{\del\Omega}+\varphi;\Omega] + \Lip(f) \|\eps(u_0-u_j^{\del\Omega})\|_{\L^1(\Omega;\R^{n \times n})}, \\
     & F[u_j^{\del\Omega}+\varphi;\Omega] \leq F[u_0+\varphi;\Omega]  + \Lip(f) \|\eps(u_0-u_j^{\del\Omega})\|_{\L^1(\Omega;\R^{n \times n})}.
    \end{align*}
    Taking into account~\eqref{eq:LDdistanceMollification}, we now infimise on the right-hand sides over all functions $\varphi\in \ld_0(\Omega)$ and arrive at
    \begin{equation}
    \label{eq:intermediateest1}
     \Big| \inf_{\Di_{u_0}} F[-;\Omega] - \inf_{\Di_{j}}F[-;\Omega] \Big|  \leq \frac{1}{8j^2},
    \end{equation}
    while the specific choice $\varphi = u_j - u_0$ in the second inequality in combination with~\eqref{eq:minizing_sequence_initial} yields
    \begin{equation}
    \label{eq:anothermolli}
     F[\widetilde{u}_{j};\Omega] \leq F[u_j ;\Omega] + \frac{1}{8j^2} \leq \inf_{\Di_{u_0}} F[-;\Omega] +  \frac{1}{4j^2}
    \end{equation}
    for all $j \in \N$. Note that the sequence $(\widetilde{u}_{j})_{j \in \N}$ has still the relevant properties of being smooth in~$\Omega$ with uniform $\lebe^{\infty}$-bound and approximating~$u$ in the (symmetric) area-strict topology, but has more regular boundary values.
    
\emph{Step 2. Definition of the $\W^{1,n+1}$-stabilised and $\lebe^{\infty}$-penalizing integrands.}	We next introduce suitable stabilised variational integrals defined in terms of $\widetilde{u}_{j}$. This will allows us to simultaneously keep control of the $\L^{\infty}$-bounds in the Ekeland variational principle in Step 3. To do so, we denote by $C_{\mathrm{M}}=C_{\rm M}(n,\Omega)>0$ a constant for the Morrey embedding $\W^{1,n+1}(\Omega;\R^{n})\hookrightarrow\C^{0,\frac{1}{n+1}}(\overline{\Omega};\R^{n})$ such that every $w\in\W^{1,n+1}(\Omega;\R^{n})$ satisfies the inequality
		\begin{align}\label{eq:optimalHoelderbound}
			|w(x)-w(y)|\leq C_{\rm M}\|w\|_{\W^{1,n+1}(\Omega;\R^{n})}|x-y|^{\frac{1}{n+1}}\qquad\text{for all } x,y \in  \overline{\Omega}.
		\end{align}
	Equally, we denote by $C_{\mathrm{K},n+1}=C_{{\rm K},n+1}(n,\Omega)\geq 1$ a constant such that every $w\in\W^{1,n+1}(\Omega;\R^{n})$ satisfies the Korn-type inequality 
		\begin{align}\label{eq:Kornn+1inequality}
			\|w\|_{\W^{1,n+1}(\Omega;\R^{n})}\leq C_{{\rm K},n+1}(\|w\|_{\L^{n+1}(\Omega;\R^{n})}+\|\eps(w)\|_{\L^{n+1}(\Omega;\rsym)}).
		\end{align}
	We may assume the estimate 
		\begin{equation}\label{eq:introUpsilon}
			\|\eps(\widetilde{u}_{j})\|_{\L^{n+1}(\Omega;\rsym)} \leq \Upsilon(j)
		\end{equation}
    for the blow-up rate of the norms $\|\eps(\widetilde{u}_{j})\|_{\L^{n+1}(\Omega;\rsym)}$ with a convex and increasing $\hold^2$-function $\Upsilon\colon\R_{\geq 0}\to\R_{\geq 0}$ with $\Upsilon(t)\to \infty$ as $t\to\infty$. We then consider a convex and increasing function $h\colon [0,2)\to [0,\infty)$ which satisfies $h=0$ on $[0,1]$ and, for $\frac{3}{2}\leq t < 2$, is even strictly increasing with
		\begin{align}\label{eq:subtlehconstruction}
			\frac{1}{\omega_{n}}\Big(\frac{4C_{\rm M}}{M (2-t)^{4}}\Big(1+2^{\frac{n-1}{2}}\Big(\mathscr{L}^{n}(\Omega)+\Big(\Upsilon\Big(\frac{1}{2-t}\Big)\Big)^{n+1} \Big)\Big)\Big)^{n(n+1)}<h(t).
		\end{align}
    The construction of such functions~$\Upsilon$ and~$h$ is elementary and is briefly addressed for the reader's convenience in Appendix~\ref{sec:Ups}. 
    
    Now let $g\colon\R^n \to \R\cup\{+\infty\}$ be given by $g(\cdot)=\widetilde{g}(|\cdot|)$, where
		\begin{align}\label{eq:gdefLinfty}
			\widetilde{g}(t)\coloneqq
			\begin{cases} 
			\displaystyle h(t) &\;\text{if}\;0\leq t < 2,\\
			+\infty &\;\text{if}\;t\geq 2.
			\end{cases}
		\end{align}
    We record that the function $g$ is convex, lower semicontinuous,  and its restriction to $\ball_{2}(0)$ is of class $\C^{2}$.  We now define the perturbed integrands $f_{j}\colon \R^{n\times n}_\sym\to \R$ by 
		\begin{align}\label{eq:AjdefLinfty}
		f_{j}(z)\coloneqq f(z)+\frac{1}{2A_{j}j^{2}}(1+|z|^{2})^{\frac{n+1}{2}} \quad \text{with} \quad A_{j}\coloneqq 1+\int_{\Omega}(1+|\eps(\widetilde{u}_{j})|^{2})^{\frac{n+1}{2}}\dx{x}. 
		\end{align}
    For future reference, we compute 
		\begin{align}
		\label{eq:nabla_f_j}
		\nabla f_j(z) & = \nabla f(z) + \frac{n+1}{2A_{j}j^{2}} (1+|z|^{2})^{\frac{n-1}{2}} z, \\
		\label{eq:nabla_2_f_j}
		\nabla^2 f_j(z) & = \nabla^2 f(z) + \frac{n+1}{2A_{j}j^{2}} (1+|z|^{2})^{\frac{n-3}{2}} \big((1+|z|^{2}) \1_{(n \times n) \times (n \times n)}+(n-1) z \otimes z\big),
		\end{align}
    for all $z \in \R^{n\times n}_\sym$. We note that by a simple convexity argument and~\eqref{eq:introUpsilon} we have 
		\begin{align*}
			A_{j} \leq 1+ 2^{\frac{n-1}{2}}(\mathscr{L}^{n}(\Omega)+(\Upsilon(j))^{n+1}),
		\end{align*}
	and thus, choosing $t=2-\frac{1}{j}$ in~\eqref{eq:subtlehconstruction} for $j\geq 2$, we find the estimate
		\begin{align}\label{eq:goodgtwiddleinequality}
			\frac{1}{\omega_{n}}\Big(\frac{4C_{\rm M}A_{j}j^{4}}{M}\Big)^{n(n+1)}<\widetilde{g}\Big(2-\frac{1}{j}\Big),
		\end{align}
	which shall turn out important below. We finally introduce the \emph{$\W^{1,n+1}$-stabilised, $\lebe^{\infty}$-penalizing integrals} by
		\begin{align}\label{eq:FjLinfty}
			F_{j}[w;\Omega]\coloneqq
			\begin{cases} 
			\displaystyle\int_{\Omega}f_{j}(\eps(w))\dx{x} + \int_{\Omega}g\Big(\frac{w}{M}\Big)\dx{x} &\;\text{if}\;w\in\mathscr{D}_{j},\\
			+\infty &\;\text{if}\;w\in \W^{-2,1}(\Omega;\R^{n})\setminus\mathscr{D}_{j}. 
			\end{cases} 
		\end{align}
	We emphasize that, by construction and Korn's inequality, $F_{j}[w;\Omega]$ can only be finite if $w \in (\W^{1,n+1}\cap{\L}{_{\leq 2M}^{\infty}})(\Omega;\R^{n})$ (and thus in particular in $\hold(\overline{\Omega};\R^n)$).	
	
\emph{Step 3. The Ekeland-type approximations $v_{j}$ and the Euler--Lagrange inequality.} We note that, for every $j \in \N$, $F_{j}[-;\Omega]$ is lower semicontinuous with respect to the norm topology on $\W^{-2,1}(\Omega;\R^{n})$, cf. Lemma~\ref{lem:EkelandLSC} with $\mathbf{f}_{1}=f_{j}$, $\mathbf{f}_{2}=g(\cdot/M)$, $q=n+1$ and $k=2$, and obviously, $F_{j}[-;\Omega]$ is not identically $+\infty$ on $\W^{-2,1}(\Omega;\R^{n})$. By virtue of $\eqref{eq:veryimportantLinftybounds}$ and the definition of~$g$ and~$A_j$ in the first step, we then find 
	\begin{align}\label{eq:Ekelandestimatecase2a}
	F_{j}[\widetilde{u}_{j};\Omega] \leq F[\widetilde{u}_{j};\Omega] + \frac{1}{2j^{2}}
	& \stackrel{\eqref{eq:anothermolli}}{\leq}  \inf_{\Di_{u_0}} F[-;\Omega] +  \frac{3}{4j^2} \\
	& \stackrel{\eqref{eq:intermediateest1}}{\leq} \inf_{\mathscr{D}_{j}} F[-;\Omega] + \frac{1}{j^{2}} \leq  \inf_{\W^{-2,1}(\Omega;\R^{n})} F_{j}[-;\Omega] + \frac{1}{j^{2}}. \nonumber
	\end{align}
	Via the Ekeland variational principle from Proposition~\ref{prop:Ekeland} we then find, for each $j\in\N$, a function $v_{j}\in\W^{-2,1}(\Omega;\R^{n})$ such that 
	\begin{align}\label{eq:almostoptimal4Linfty}
	\begin{split}
	&\|v_{j}-\widetilde{u}_{j}\|_{\W^{-2,1}(\Omega;\R^{n})}\leq \frac{1}{j},\\ 
	&F_{j}[v_{j};\Omega] \leq F_{j}[w;\Omega] + \frac{1}{j}\|v_{j}-w\|_{\W^{-2,1}(\Omega;\R^{n})}\qquad\text{for all}\;w\in \W^{-2,1}(\Omega;\R^{n}). 
	\end{split}
	\end{align}
We now proceed to draw several conclusions from~\eqref{eq:almostoptimal4Linfty}, which are in particular crucial for the derivation of the perturbed Euler--Lagrange inequality.

\begin{proposition}\label{lem:ELperutrbed}
Let $(v_{j})_{j\in\N}$ be the Ekeland-type approximation sequence from above. Then for every $j \in \N$ the following estimates hold:  
\begin{align}
\label{v_j_estimate_1}
  & \int_{\Omega}|\eps(v_{j})|\dx{x} \leq \frac{1}{\gamma} \Big(\inf_{\mathscr{D}_{u_{0}}}F[-;\Omega] + \frac{2}{j^{2}}\Big), \\
\label{v_j_estimate_n}  
  & \frac{1}{2A_{j}j^{2}}\int_{\Omega}(1+|\eps(v_{j})|^{2})^{\frac{n+1}{2}}\dx{x} \leq \frac{2}{j^{2}}, \\
\label{v_j_estimate_bounded} 
  & \sup_{j\in\N} \|v_{j}\|_{\lebe^\infty(\Omega;\R^n)}\leq 2M,
\end{align}
where $\gamma$ is as in the linear growth condition~\eqref{eq:lingrowth1} and~$M$ as  in~\eqref{eq:veryimportantLinftybounds}. Moreover, there exists $j_{0}\in\N_{\geq 2}$ depending only on $n$,~$\Omega$ and~$M$ such that 
\begin{equation}
\label{v_j_estimate_bounded_strict} 
  \|v_{j}\|_{\lebe^\infty(\Omega;\R^n)} \leq \Big(2- \frac{1}{j^2}\Big) M <2M \quad \text{for every }j\geq j_{0}.
\end{equation}
\end{proposition}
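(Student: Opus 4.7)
The plan is to extract all four estimates from the Ekeland almost-minimality~\eqref{eq:almostoptimal4Linfty}(b) by testing against~$\widetilde{u}_{j}$. Combined with the chain~\eqref{eq:Ekelandestimatecase2a}, this yields the \emph{master energy bound}
\begin{equation*}
F_{j}[v_{j};\Omega]\,\leq\,F_{j}[\widetilde{u}_{j};\Omega]\,\leq\,\inf_{\mathscr{D}_{u_{0}}}F[-;\Omega]+\tfrac{3}{4j^{2}},
\end{equation*}
after which the three nonnegative summands of~\eqref{eq:FjLinfty} can be peeled off individually.

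Since $f(z)\geq \gamma|z|$ by~\eqref{eq:lingrowth1}, dropping the stabilisation and the penalty from this master bound gives~\eqref{v_j_estimate_1} at once (with the slack $3/(4j^{2})\leq 2/j^{2}$). Because $v_{j}\in \mathscr{D}_{j}$, estimate~\eqref{eq:intermediateest1} yields $F[v_{j};\Omega]\geq \inf_{\mathscr{D}_{u_{0}}} F[-;\Omega]-1/(8j^{2})$, and subtracting controls the viscosity stabilisation by $7/(8j^{2})<2/j^{2}$, proving~\eqref{v_j_estimate_n}. For~\eqref{v_j_estimate_bounded}, finiteness of $F_{j}[v_{j};\Omega]$ forces $v_{j}\in \mathscr{D}_{j}\subset \W^{1,n+1}(\Omega;\R^{n})$ and $\int_{\Omega} g(v_{j}/M)\dx{x}<\infty$; since $g\equiv +\infty$ on $\R^{n}\setminus \ball_{2}(0)$, this forces $|v_{j}|/M<2$ almost everywhere, and Morrey's embedding upgrades the a.e.-bound to the pointwise bound $\|v_{j}\|_{\lebe^{\infty}(\Omega;\R^{n})}\leq 2M$.

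The main obstacle is the strict bound~\eqref{v_j_estimate_bounded_strict}: this is exactly the point where the tailored construction of~$h$ via~\eqref{eq:subtlehconstruction} (equivalently,~\eqref{eq:goodgtwiddleinequality}) has to kick in. I would argue by contradiction: suppose $|v_{j}(x_{0})|>(2-1/j^{2})M$ at some $x_{0}\in \overline{\Omega}$. On $\partial\Omega$ we have $v_{j}=u_{j}^{\partial\Omega}$ with $\|u_{j}^{\partial\Omega}\|_{\lebe^{\infty}(\Omega;\R^{n})}\leq \|\mathfrak{J}\|_{\infty} m<M$, so necessarily $x_{0}\in \Omega$. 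Combining the Morrey inequality~\eqref{eq:optimalHoelderbound} with the Korn-type inequality~\eqref{eq:Kornn+1inequality} and using $\|v_{j}\|_{\lebe^{n+1}(\Omega;\R^{n})}\leq 2M (\mathscr{L}^{n}(\Omega))^{1/(n+1)}$ from~\eqref{v_j_estimate_bounded} together with $\|\eps(v_{j})\|_{\lebe^{n+1}(\Omega;\rsym)}\leq (4A_{j})^{1/(n+1)}$ from~\eqref{v_j_estimate_n}, the radius
\begin{equation*}
r_{j}\coloneqq \Big(\tfrac{(j-1)M}{j^{2} C_{\mathrm{M}}\|v_{j}\|_{\W^{1,n+1}(\Omega;\R^{n})}}\Big)^{n+1}
\end{equation*}
ensures that $|v_{j}(x)-v_{j}(x_{0})|\leq (j-1)M/j^{2}$ for every $x\in \ball_{r_{j}}(x_{0})$, whence $|v_{j}(x)|/M\geq 2-1/j$ and $g(v_{j}/M)\geq h(2-1/j)$ throughout $\ball_{r_{j}}(x_{0})\cap \Omega$.

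Finally, the Lipschitz regularity of $\partial\Omega$ gives $\mathscr{L}^{n}(\ball_{r_{j}}(x_{0})\cap \Omega)\geq c(\Omega)r_{j}^{n}$, and combining with the sharp lower bound~\eqref{eq:goodgtwiddleinequality} produces
\begin{equation*}
\int_{\Omega} g\Big(\tfrac{v_{j}}{M}\Big) \dx{x}\;\geq\; c(\Omega)\,\omega_{n}\, r_{j}^{n}\, h\big(2-\tfrac{1}{j}\big)\;>\; c(\Omega)\Big(\tfrac{4(j-1) A_{j} j^{2}}{\|v_{j}\|_{\W^{1,n+1}(\Omega;\R^{n})}}\Big)^{n(n+1)}.
\end{equation*}
Since $\|v_{j}\|_{\W^{1,n+1}(\Omega;\R^{n})}\lesssim 1+A_{j}^{1/(n+1)}$ with a constant depending only on $(n,\Omega,M)$, the right-hand side blows up at least like $A_{j}^{n^{2}} j^{2n(n+1)}$, whereas the master bound traps $\int_{\Omega} g(v_{j}/M)\dx{x}\leq \inf_{\mathscr{D}_{u_0}} F[-;\Omega]+3/(4j^{2})$ uniformly in~$j$. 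This produces the desired contradiction for every $j\geq j_{0}(n,\Omega,M)$. The main technical subtlety is the quantitative bookkeeping of the $j$-dependence through $\Upsilon(j)^{n+1}\sim A_{j}$: the explicit factor $1/(2-t)^{4}$ and the placement of $\Upsilon(1/(2-t))^{n+1}$ in~\eqref{eq:subtlehconstruction} are engineered precisely so that $r_{j}^{n} h(2-1/j)$ beats the uniform Ekeland bound by a margin that opens up as $j\to\infty$.
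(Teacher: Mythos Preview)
Your argument is essentially correct and follows the same mechanism as the paper: both combine a quantitative H\"older estimate for $v_{j}$ (obtained from Korn~\eqref{eq:Kornn+1inequality}, Morrey~\eqref{eq:optimalHoelderbound}, and the $A_{j}$-bound on $\|\eps(v_{j})\|_{\lebe^{n+1}}$) with the calibrated lower bound~\eqref{eq:goodgtwiddleinequality} on $h(2-1/j)$. The paper packages the strict bound~\eqref{v_j_estimate_bounded_strict} as a \emph{direct} argument---Chebyshev bounds the measure of the superlevel set $\{|v_{j}|/M>2-1/j\}$, so it cannot contain a ball of the critical radius, and then H\"older continuity propagates the good bound to every point (with a separate, cleaner treatment of points near $\partial\Omega$ using the boundary datum directly)---whereas you run the contrapositive as a contradiction. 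Both are valid; the paper's version avoids invoking the Lipschitz measure-density estimate $\mathscr{L}^{n}(\ball_{r_{j}}(x_{0})\cap\Omega)\geq c(\Omega)r_{j}^{n}$.

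Two minor bookkeeping points. First, testing~\eqref{eq:almostoptimal4Linfty}$_{2}$ with $w=\widetilde{u}_{j}$ gives $F_{j}[v_{j};\Omega]\leq F_{j}[\widetilde{u}_{j};\Omega]+\tfrac{1}{j^{2}}$, not the inequality without the $1/j^{2}$ (alternatively, you may quote Proposition~\ref{prop:Ekeland}(b) directly); this does not affect any of your conclusions since the slack is absorbed in $2/j^{2}$. Second, in the final contradiction you bound $\int_{\Omega}g(v_{j}/M)\dx{x}$ by the master bound $\inf_{\mathscr{D}_{u_{0}}}F+\tfrac{3}{4j^{2}}$; this makes $j_{0}$ depend on $\inf_{\mathscr{D}_{u_{0}}}F$, hence on $u_{0}$, contrary to the stated dependence $j_{0}=j_{0}(n,\Omega,M)$. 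The fix is to use the sharper bound $\int_{\Omega}g(v_{j}/M)\dx{x}\leq\tfrac{2}{j^{2}}$, which you already obtained by the same subtraction that gave~\eqref{v_j_estimate_n} (both the stabilisation \emph{and} the penalty are controlled by~\eqref{eq:intermediateest1}).
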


\begin{proof}
We initially consider $j \in \N$ arbitrary. We start by testing $\eqref{eq:almostoptimal4Linfty}_{2}$ with $w=\widetilde{u}_{j}$. Using~\eqref{eq:almostoptimal4Linfty}$_{1}$ in conjunction with~\eqref{eq:Ekelandestimatecase2a} we then obtain
\begin{align}\label{eq:usefulfjboundsLinfty}
	F_{j}[v_{j};\Omega]\leq F_{j}[\widetilde{u}_{j};\Omega] + \frac{1}{j^2} \leq \inf_{\mathscr{D}_{u_{0}}} F[-;\Omega] + \frac{7}{4j^{2}},
	\end{align} 
so that, in particular, $F_{j}[v_{j};\Omega]$ is finite. Now, on the one hand, inequality~\eqref{v_j_estimate_1} follows directly from the linear growth condition~\eqref{eq:lingrowth1}. On the other hand, in view of the definition of the functional~$F_j$ in~\eqref{eq:FjLinfty}, we deduce $v_{j}\in\mathscr{D}_{j} \cap \sobo^{1,n+1}(\Omega;\R^n)$ and, with $v_{j}\in \C(\overline{\Omega};\R^{n})$, 
\begin{equation*}
	\sup_{\Omega} |v_{j}|\leq 2M \quad \text{and} \quad \mathscr{L}^{n}(\{x \in \Omega \colon |v_{j}(x)|=2M\})=0,
\end{equation*}
which proves~\eqref{v_j_estimate_bounded}. Since this bound is not good enough for the subsequent derivation of the Euler--Lagrange inequality, see Remark~\ref{rem:W1n1regularisations} further below, we next need to improve this estimate to a strict inequality for sufficiently large indices. Towards this aim, we utilise $v_{j} \in \mathscr{D}_{j}$ and estimate via~\eqref{eq:usefulfjboundsLinfty} 
\begin{equation*}
 \inf_{\mathscr{D}_{j}} F[-;\Omega] \leq F[v_{j};\Omega] \leq  F_j[v_{j};\Omega] \leq \inf_{\mathscr{D}_{u_{0}}} F[-;\Omega] + \frac{7}{4j^{2}}.
\end{equation*}
In view of~\eqref{eq:intermediateest1} we then obtain
\begin{equation}
\label{eq:perturbation_v_j_small}
 \frac{1}{2A_{j}j^{2}} \int_{\Omega}(1+|\eps(v_{j})|^{2})^{\frac{n+1}{2}}\dx{x} + \int_{\Omega}g\Big(\frac{v_{j}}{M}\Big)\dx{x} = F_{j}[v_{j};\Omega] - F[v_{j};\Omega] \leq \frac{2}{j^2},
\end{equation}
which in particular proves~\eqref{v_j_estimate_n} and the estimate $\|\eps(v_{j})\|_{\L^{n+1}(\Omega;\R^{n \times n})} \leq 2A_j$. Recalling the Korn-type inequality from~\eqref{eq:Kornn+1inequality}, we may now choose $j_{0}\in\N_{\geq 2}$ depending only on $n$,~$\Omega$ and~$M$ sufficiently large such that the inequalities
\begin{equation*}
 C_{{\rm K},n+1} \leq j_0^2 \quad \text{and} \quad  C_{{\rm K},n+1} M \mathscr{L}^{n}(\Omega)^{\frac{1}{n+1}}\leq j_0^{2}
\end{equation*}
are satisfied. In view of~\eqref{eq:Kornn+1inequality} and~\eqref{v_j_estimate_bounded} we then estimate 
\begin{align*}
	\|v_{j}\|_{\W^{1,n+1}(\Omega;\R^{n})} & \leq C_{{\rm K},n+1}(\|v_{j}\|_{\L^{n+1}(\Omega;\R^{n})}+\|\eps(v_{j})\|_{\L^{n+1}(\Omega;\R^{n \times n})}) \\ & \leq C_{{\rm K},n+1}\big(2M\mathscr{L}^{n}(\Omega)^{\frac{1}{n+1}} + 2A_{j}\big)\leq 4 A_{j}j^{2}
\end{align*}
for all $ j \geq j_0$.  Thus, as a consequence of~\eqref{eq:optimalHoelderbound}, we find 
\begin{equation}
	\label{eq:v_j_Hoelder}
	|v_{j}(x)-v_{j}(y)|\leq 4C_{\rm M}A_{j}j^{2}|x-y|^{\frac{1}{n+1}}\qquad\text{for all }x,y\in \overline{\Omega}. 
\end{equation}
By the strict monotonicity of $\widetilde{g}$ on $[\frac{3}{2},2)$ and by~\eqref{eq:perturbation_v_j_small}, we then infer 
\begin{align*}
	\mathscr{L}^{n}\Big(\Big\{ x \in \Omega \colon \frac{|v_{j}(x)|}{M}>2-\frac{1}{j} \Big\} \Big)  & =\mathscr{L}^{n}\Big(\Big\{ x \in \Omega \colon \widetilde{g}\Big(\frac{|v_{j}(x)|}{M}\Big)>\widetilde{g}\Big(2-\frac{1}{j} \Big)\Big\} \Big)\\
	& \!\!\!\!\leq \frac{1}{\widetilde{g}\Big(2-\frac{1}{j} \Big)}\int_{\Omega}g\Big(\frac{v_{j}}{M}\Big)\dx{x}\leq \frac{1}{\widetilde{g}\Big(2-\frac{1}{j} \Big)} < \omega_{n}\Big(\frac{M}{4C_{\rm M}A_{j}j^{4}}\Big)^{n(n+1)}, 
\end{align*}
	where the last inequality is valid by construction of $g$, see~\eqref{eq:goodgtwiddleinequality}. This inequality implies that the set $\Omega_{j}^{M} \coloneqq \{x \in \Omega \colon |v_{j}(x)|/M>2-\frac{1}{j}\}$ cannot contain a ball of radius $r_j \coloneqq (M/(4C_{\rm M}A_{j}j^{4}))^{n+1}$. We now argue that $|v_j(x)|$ is strictly below $2M$ with a quantified ($j$-dependent) estimate. For this purpose, we distinguish points away from and close to the boundary. 
	
	\begin{figure}
	\begin{center}
	\begin{tikzpicture}[scale=1.5]
	\draw[-, blue!60!white, fill=blue!10!white, opacity=0.4] (0,0) [out = -30, in =240] to (3,1) [out=60, in = 0] to (2.5,2.5) [out=180, in =-30] to (0.5,3) [out = 150, in = 40] to (-0.5,1.5) [out = 220, in =150] to (0,0); 
			\draw[-,blue!30!white,fill = blue!30!white,opacity=0.5] (0,0) [out=70, in =0] to (-0.05,2.45) -- (0.02,2.7) [out=-40, in = 220] to (0.8,2.84) --  (1.1,2.71) [out = 250, in = 180] to (0.9,1.3) [out=0, in =250] to (1.3,2.64) -- (1.5,2.59) [out=270, in = 180] to (1.9,1.0) [out = 0, in =120] to (2.3,0.2) -- (2.1,0.07) [out = 110, in = 0] to (1.75,0.75) [out=180, in =90] to (1.6,-0.152) -- (1.5,-0.19) [out=90, in = 0] to (1,1) [out=180, in =70] to (0.25,-0.12) -- (0,0); 
				\draw[-,blue!30!white,fill = blue!30!white,opacity=0.5] (2.15,1.25) [out=-70, in =180] to (2.5,0.75) [out = 0, in =280] to (2.75,1.5) [out = 100, in = -20] to (2.3,2.25) [out =160, in = 110] to (2.15,1.25);
				\node at (-0.25,0.5) {\footnotesize\textbullet};
					\node[right] at (-0.25,0.5) {\large $\widetilde{x}$};
					\node[right] at (1.65,1.28) {\large $x$};
				\draw [dotted,thick] (-0.25,0.5) circle [radius=1.225];
					\node at (1.65,1.28) {\footnotesize\textbullet};
						\draw [dotted,thick] (1.65,1.28) circle [radius=1.225];
				\draw[blue!40!white,-] (1.35,2.25) -- (2.95, 3.05);
					\draw[blue!40!white,-] (2.35,1.85) -- (3.15, 2.95);
					\node[blue!80!white] at (3.25,3.1) {$\Omega_{j}^{M}$};
					\node[blue!50!white] at (0.075,2) {\LARGE $\Omega$};
						\draw[<->]  (-0.25,0.5) -- (-1.0,-0.5);
						\node at (-0.8,-0) {$r_{j}$};
	\end{tikzpicture}
	\end{center}
	\caption{The geometric situation in the globalisation of the $\lebe^{\infty}$-threshold from $\Omega_{j}^{M} \coloneqq \{x\in\Omega\colon\;|v_{j}(x)|/M>2-\frac{1}{j}\}$ to $\Omega$ by use of H\"{o}lder continuity. Here, $x$ corresponds to the first case, see~\eqref{eq:caso1}, and $\widetilde{x}$ to the second case, see~\eqref{eq:caso2}. In both cases, the balls with radius $r_{j}$ have non-trivial intersection with $\Omega_{j}^{M}$, and the H\"{o}lder continuity allows to amplify the region where $|v_{j}|$ is at least $M(\frac{1}{j}-\frac{1}{j^{2}})$ away from $2M$ to the entire $\overline{\Omega}$.}
	\end{figure}
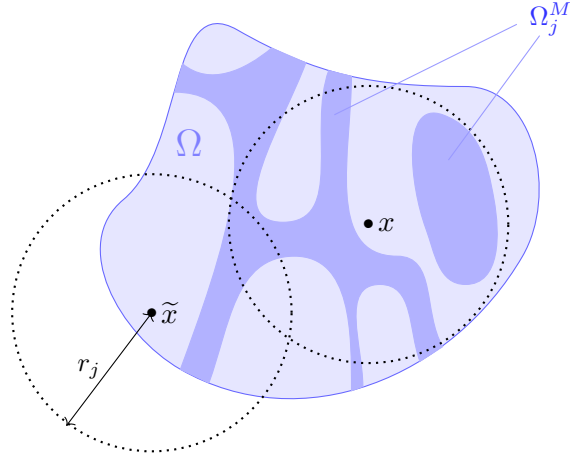
	
	Firstly, for $x \in \Omega$ with $\dist(x,\partial \Omega) \geq r_j$ (implying $\ball_{r_j}(x) \subset \Omega$) and $|v_{j}(x)|/M >2-\frac{1}{j}$, there exists $y\in\Omega$ with $|x-y| < r_j$ and $|v_{j}(y)|/M\leq 2-\frac{1}{j}$. Since $v_{j}\in\C^{0,\frac{1}{n+1}}(\overline{\Omega};\R^{n})$ with~\eqref{eq:v_j_Hoelder}, the definition of~$r_j$ leads to
	\begin{align}\label{eq:caso1}
	\frac{|v_{j}(x)|}{M} \leq \frac{|v_{j}(y)|}{M} + \frac{4C_{\rm M}A_{j}j^{2}}{M}|x-y|^{\frac{1}{n+1}}\leq 2-\frac{1}{j}+\frac{1}{j^{2}} < 2. 
	\end{align}
	Secondly, for $x \in \overline{\Omega}$ with $\dist(x,\partial \Omega) < r_j$, there exists a boundary point $y \in \partial \Omega$ with $|x-y| < r_j$. Taking now advantage of $v_j(y) = u_{j}^{\partial\Omega}(y)$ combined with the bound~\eqref{eq:verycarefulapproximation}$_2$, we find in a similar way 
	\begin{equation}\label{eq:caso2}
	 \frac{|v_{j}(x)|}{M} \leq \frac{|u_{j}^{\partial\Omega}(y)|}{M} + \frac{4C_{\rm M}A_{j}j^{2}}{M}|x-y|^{\frac{1}{n+1}}\leq 1 + \frac{1}{j^{2}} < 2.
	\end{equation}
	This case distinction implies $ \|v_{j}\|_{\lebe^\infty(\Omega;\R^n)} \leq (2- \frac{1}{j^2}) M$ and hence the final claim~\eqref{v_j_estimate_bounded_strict}. This completes the proof of the proposition.
\end{proof} 	

\begin{lemma}[Euler--Lagrange inequality] \label{lem:Euler_Lagrange_inequality}
Let $(v_{j})_{j\in\N}$ be the Ekeland-type approximation sequence from above and $j_{0}\in\N$ as in the previous Proposition~\ref{lem:ELperutrbed}. Then the following \emph{Euler--Lagrange inequality} is valid for all $j\geq j_{0}$ and $\varphi\in\sobo_{0}^{1,n+1}(\Omega;\R^{n})$:
\begin{align}\label{eq:EL2}
	\left\vert \int_{\Omega}\big\langle \nabla f _{j}(\eps(v_{j})),\eps(\varphi)\big\rangle \dx{x} + \int_{\Omega} \Big\langle \nabla g\Big(\frac{v_{j}}{M}\Big),\frac{\varphi}{M}\Big\rangle\dx{x}\right\vert \leq \frac{1}{j}\|\varphi\|_{\W^{-2,1}(\Omega;\R^{n})}. 
	\end{align}
\end{lemma}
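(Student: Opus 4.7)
My plan is to derive~\eqref{eq:EL2} by inserting two-sided perturbations $w = v_j + t\varphi$ into the Ekeland almost-minimality condition~\eqref{eq:almostoptimal4Linfty}$_2$ and letting $t \to 0$. The decisive ingredient is the \emph{strict} bound $\|v_j\|_{\L^\infty(\Omega;\R^n)} \leq (2 - 1/j^2) M$ from~\eqref{v_j_estimate_bounded_strict}, which keeps $v_j/M$ at a definite quantitative distance from the threshold $\partial\ball_2(0)$ across which $g$ blows up to $+\infty$; without such a margin $F_j[v_j + t\varphi;\Omega]$ could be $+\infty$ for every $t \neq 0$, and the Ekeland inequality would become vacuous.

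First I would fix $\varphi \in \sobo_0^{1,n+1}(\Omega;\R^n)$ and note that $\varphi$ is bounded on $\overline{\Omega}$ by Morrey's embedding. Consequently, for every real $t$ with $|t| < M/(j^2 \|\varphi\|_{\L^\infty(\Omega;\R^n)})$ one has simultaneously $v_j + t\varphi \in \mathscr{D}_j$ (using $\varphi \in \ld_0(\Omega)$), $\eps(v_j + t\varphi) \in \L^{n+1}(\Omega;\rsym)$, and $\|v_j + t\varphi\|_{\L^\infty(\Omega;\R^n)} < 2M$, so that $F_j[v_j + t\varphi;\Omega]$ is finite. Plugging this competitor into~\eqref{eq:almostoptimal4Linfty}$_2$ then yields
\begin{equation*}
F_j[v_j + t\varphi;\Omega] - F_j[v_j;\Omega] \,\geq\, -\frac{|t|}{j}\|\varphi\|_{\W^{-2,1}(\Omega;\R^n)}.
\end{equation*}
Dividing by $t$ with the two signs and sending $t \to 0^{\pm}$ sandwiches the Gâteaux derivative of $F_j$ at $v_j$ in direction $\varphi$ between $\pm j^{-1}\|\varphi\|_{\W^{-2,1}(\Omega;\R^n)}$, which once identified with the integrals on the left-hand side of~\eqref{eq:EL2} yields the claimed estimate.

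The identification of the derivative is a routine application of dominated convergence to the two summands of $F_j$ separately. For the $f_j$-term I would use the pointwise growth $|\nabla f_j(\xi)| \leq \Gamma + c(n) (1+|\xi|)^{n}/(A_j j^2)$ coming from~\eqref{eq:nabla_f_j} and Lemma~\ref{lem:boundbelow}\ref{item:boundbelow2}, together with Hölder's inequality with conjugate exponents $(n+1)/n$ and $n+1$ and the $\L^{n+1}$-control of $\eps(v_j)$ and $\eps(\varphi)$. For the $g$-term I would use that, for $|t|$ sufficiently small, $v_j/M + t\varphi/M$ takes values in a compact subset of $\ball_2(0)$ on which $g \in \hold^2$, so the difference quotient is uniformly dominated by a constant multiple of $\|\varphi\|_{\L^\infty(\Omega;\R^n)}$ and dominated convergence applies trivially. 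The main---and really only---subtle point is the admissibility step just described: one must produce competitors that simultaneously have finite $F_j$-value and fill out an entire $t$-neighborhood of $v_j$ in the directions $\varphi \in \sobo_0^{1,n+1}(\Omega;\R^n)$, which is precisely what the refined estimate~\eqref{v_j_estimate_bounded_strict}, obtained via the Hölder bound~\eqref{eq:v_j_Hoelder} and the geometric packing argument in the proof of Proposition~\ref{lem:ELperutrbed}, was engineered to supply.
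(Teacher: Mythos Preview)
Your proposal is correct and follows essentially the same approach as the paper: use Morrey's embedding to see that $\varphi$ is bounded, invoke the strict $\lebe^{\infty}$-bound~\eqref{v_j_estimate_bounded_strict} to ensure that $F_{j}[v_{j}+t\varphi;\Omega]<\infty$ for all sufficiently small $|t|$, test~\eqref{eq:almostoptimal4Linfty}$_{2}$ with $w=v_{j}\pm t\varphi$, and pass to the limit $t\to 0$. Your write-up simply supplies more detail on the dominated-convergence justification for the G\^{a}teaux derivative than the paper's terse version.
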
 

\begin{proof}
Let $\varphi\in\W_{0}^{1,n+1}(\Omega;\R^{n})$ be arbitrary. Then, since $\varphi\in\C_{0}(\Omega;\R^{n})$ by Morrey's embedding, we infer from~\eqref{v_j_estimate_bounded_strict} that for $\theta>0$ sufficiently small there holds $|v_{j}+\theta\varphi|<2M$ too. Hence we have $F_{j}[v_{j}+\theta\varphi]<\infty$, and since $g$ is differentiable on $\ball_{2}(0)$, we can test $\eqref{eq:almostoptimal4Linfty}_{2}$ with the functions $w = v_j \pm \theta \varphi$. Letting $\theta\searrow 0$, we then obtain the claim~\eqref{eq:EL2}. 
\end{proof}

We conclude this section with some comments on our construction of the Ekeland-type approximation sequence:
\begin{remark}[$\lebe^{\infty}$-threshold]\label{rem:W1n1regularisations}
The specific strategy as displayed above is motivated by Proposition~\ref{lem:ELperutrbed} and Lemma~\ref{lem:Euler_Lagrange_inequality}. Here, the key point is the quantified $\lebe^{\infty}$-bound~\eqref{v_j_estimate_bounded_strict}: If we only had the slightly weaker estimate  $\|v_{j}\|_{\lebe^{\infty}(\Omega;\R^{n})}\leq 2 M$, the Euler--Lagrange inequality could not be derived in the requisite form. Indeed, even for $\varphi\in \hold_{c}^{\infty}(\Omega;\R^{n})$, $F_{j}[v_{j}+t\varphi]=\infty$ then could potentially happen howsoever small $|t|>0$ might be. Compared with the BV-setting considered in~\cite{BeckSchmidt,SchmidtHabil}, the bound~\eqref{v_j_estimate_bounded_strict} moreover controls the minimal distance of~$\|v_{j}\|_{\lebe^\infty(\Omega;\R^n)}$ to the threshold $2M$ in a precisely quantified way. This is also the chief reason for considering $\sobo^{1,n+1}$-stabilisations (instead of $\sobo^{1,2}$-stabilisations considered in~\cite{Gmeineder}), letting us work with bounded \emph{and} continuous approximations.
\end{remark}
	
	\begin{remark}[On bounded minimising sequences]\label{rem:cutoff}In the $\mathrm{BV}$-setting of~\cite{BeckSchmidt}, the specific minimising sequence $(u_{j})_{j\in\N}$ as chosen at the very beginning of the approximation scheme in Step~1 \emph{can a priori be taken to belong to $\L_{\leq m}^{\infty}(\Omega;\R^{n})$}. In the situation of~\cite{BeckSchmidt}, this is achieved by possibly passing to the truncations
		\begin{align*}
			\widehat{u}_j\coloneqq \begin{cases} u_j &\;\text{if}\;|u_j|\leq m,\\
			\frac{u_j}{|u_j|}m&\;\text{if}\;|u_j|>m,
			\end{cases}
		\end{align*}
	which satisfies $|\nabla \widehat{u}_j|\leq |\nabla u_j|$. It is \emph{not} clear to us how to show that the same function satisfies a similar pointwise bound exclusively involving symmetric gradients (possibly up to a constant), and so we are bound to stick to Lemma~\ref{lem:SAP}. 
	\end{remark}
\subsection{Preliminary estimates}
We begin with the investigation of the regularity properties of the sequence $(v_j)_{j \in \N}$ by proving the existence of second order derivatives (while uniform estimates are postponed to the next Section~\ref{subsection:second_order_uniform}) and a differentiated version of the Euler--Lagrange inequality~\eqref{eq:EL2}. To this end, it is convenient to define
\[
\sigma_j \coloneqq \nabla f_j(\eps(v_j)),
\]
where $f_{j}$ is given by~\eqref{eq:AjdefLinfty} and $v_{j}\in (\W^{1,n+1} \cap\L_{\leq 2M}^{\infty})(\Omega;\R^{n})$ is the function obtained from the application of the Ekeland variational principle, for every $j \in \N$.

\begin{lemma}[Non-uniform second order estimates]\label{lem:higherregularityapproximate}
Let $f\in \C^{2}(\rsym)$ satisfy~\eqref{eq:lingrowth1} and, for some $\Lambda\in (0,\infty)$, the bound 
\begin{align}\label{eq:generalisedaelliptic}
0 \leq \langle \nabla^2f(z)\xi,\xi\rangle \leq \Lambda\frac{|\xi|^{2}}{(1+|z|^{2})^{\frac{1}{2}}}\qquad\text{for all}\;z,\xi\in\rsym. 
\end{align}
For every $j \geq j_0$ with $j_0\in\mathds{N}$ as in Proposition~\ref{lem:ELperutrbed} there holds $v_{j}\in \W_{\loc}^{2,2}(\Omega;\R^{n})$ with
	\begin{align}\label{eq:nonunifreg2}
	(1+|\eps(v_{j})|^{2})^{\frac{n-1}{2}}|\partial_{s}\eps(v_{j})|^{2}\in \L_{\loc}^{1}(\Omega)\qquad\text{for all } s\in\{1,\ldots,n\}.
	\end{align} 
\end{lemma}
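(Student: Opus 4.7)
The plan is to run the classical difference quotient method on the perturbed Euler--Lagrange inequality~\eqref{eq:EL2}, exploiting the fact that, for fixed $j\geq j_{0}$, the stabilised integrand $f_{j}$ is uniformly elliptic with polynomial upper bound. Concretely, from the formula~\eqref{eq:nabla_2_f_j} together with~\eqref{eq:generalisedaelliptic} one reads off
\begin{align*}
\frac{n+1}{2A_{j}j^{2}}(1+|z|^{2})^{\frac{n-1}{2}}|\xi|^{2}\leq \langle \nabla^{2}f_{j}(z)\xi,\xi\rangle \leq c(j,n,\Lambda)(1+|z|^{2})^{\frac{n-1}{2}}|\xi|^{2}
\end{align*}
for all $z,\xi\in\rsym$. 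Fix $s\in\{1,\dots,n\}$ and $\Omega'\Subset\Omega$, pick a cutoff $\eta\in\hold_{c}^{\infty}(\Omega;[0,1])$ with $\eta\equiv 1$ on $\Omega'$, and let $h>0$ be small enough so that $\Delta_{s,h}v_{j}$ is defined on $\spt(\eta)$. Since $v_{j}\in\sobo^{1,n+1}(\Omega;\R^{n})$, the function $\varphi_{h}\coloneqq \Delta_{s,-h}(\eta^{2}\Delta_{s,h}v_{j})$ lies in $\sobo_{0}^{1,n+1}(\Omega;\R^{n})$ and is thus admissible in~\eqref{eq:EL2}.

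Discrete integration by parts gives
\begin{align*}
\int_{\Omega}\langle \nabla f_{j}(\eps(v_{j})),\eps(\varphi_{h})\rangle\,\dif x = -\int_{\Omega}\langle \Delta_{s,h}[\nabla f_{j}(\eps(v_{j}))],\eta^{2}\Delta_{s,h}\eps(v_{j})+2\eta\nabla\eta\odot \Delta_{s,h}v_{j}\rangle\,\dif x.
\end{align*}
Writing $\Delta_{s,h}[\nabla f_{j}(\eps(v_{j}))]=\int_{0}^{1}\nabla^{2}f_{j}(\xi_{t}^{h})\Delta_{s,h}\eps(v_{j})\,\dif t$ with $\xi_{t}^{h}\coloneqq \eps(v_{j})+th\Delta_{s,h}\eps(v_{j})$, the diagonal term yields the coercive quantity
\begin{align*}
I_{h}\coloneqq \int_{\Omega}\eta^{2}\int_{0}^{1}\langle \nabla^{2}f_{j}(\xi_{t}^{h})\Delta_{s,h}\eps(v_{j}),\Delta_{s,h}\eps(v_{j})\rangle\,\dif t\,\dif x \geq \tfrac{n+1}{2A_{j}j^{2}}\int_{\Omega}\eta^{2}\int_{0}^{1}(1+|\xi_{t}^{h}|^{2})^{\frac{n-1}{2}}|\Delta_{s,h}\eps(v_{j})|^{2}\,\dif t\,\dif x,
\end{align*}
which we wish to bound by a constant independent of $h$. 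The cross term involving $\nabla\eta$ is absorbed by the standard Young inequality combined with the upper bound on $\nabla^{2}f_{j}$, producing only an $h$-independent error term controlled by $\|\nabla\eta\|_{\lebe^{\infty}}^{2}\int_{\spt\eta}(1+|\eps(v_{j})|^{2})^{\frac{n-1}{2}}|\Delta_{s,h}v_{j}|^{2}\,\dif x$, which is bounded via $v_{j}\in\sobo^{1,n+1}$ and the elementary $\lebe^{p}$-bound for difference quotients. The $g$-contribution is harmless: by~\eqref{v_j_estimate_bounded_strict} we have $|v_{j}/M|\leq 2-j^{-2}<2$, so $\nabla g(v_{j}/M)\in\lebe^{\infty}(\Omega;\R^{n})$ with a $j$-dependent bound, and the integral $\int_{\Omega}\langle \nabla g(v_{j}/M),\varphi_{h}/M\rangle\,\dif x$ is bounded uniformly in~$h$. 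Finally, the Ekeland remainder is handled via~\eqref{eq:neg_Sob_0} and~\eqref{eq:neg_Sob_3}, giving
\begin{align*}
\|\varphi_{h}\|_{\W^{-2,1}(\Omega;\R^{n})}\leq \|\eta^{2}\Delta_{s,h}v_{j}\|_{\lebe^{1}(\Omega;\R^{n})}\leq \|\partial_{s}v_{j}\|_{\lebe^{1}(\Omega;\R^{n})},
\end{align*}
which again is $h$-independent. Collecting everything, we obtain $I_{h}\leq C(j,\eta,v_{j})$ uniformly in small $h$.

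Passing to the limit $h\searrow 0$, since $\xi_{t}^{h}\to \eps(v_{j})$ a.e.\ and $\Delta_{s,h}\eps(v_{j})\rightharpoonup \partial_{s}\eps(v_{j})$ in $\lebe^{2}_{\loc}$, Fatou's lemma applied to the weighted integrand yields
\begin{align*}
\int_{\Omega'}(1+|\eps(v_{j})|^{2})^{\frac{n-1}{2}}|\partial_{s}\eps(v_{j})|^{2}\,\dif x \leq \liminf_{h\searrow 0} I_{h} <\infty,
\end{align*}
establishing~\eqref{eq:nonunifreg2}. Since $n\geq 2$ this already implies $\partial_{s}\eps(v_{j})\in\lebe^{2}_{\loc}(\Omega;\rsym)$ for every $s$, and the algebraic identity $\partial_{i}\partial_{j}v_{j}^{k}=\partial_{i}(\eps(v_{j}))_{jk}+\partial_{j}(\eps(v_{j}))_{ik}-\partial_{k}(\eps(v_{j}))_{ij}$ (valid in the sense of distributions, with all three terms in $\lebe^{2}_{\loc}$) upgrades this to $v_{j}\in\sobo^{2,2}_{\loc}(\Omega;\R^{n})$. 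The main obstacle is purely bookkeeping: verifying that the Ekeland slack and the $\nabla g$-contribution are compatible with the difference-quotient test function in the $\sobo^{-2,1}$-pairing, as all genuine analytic smallness comes from the diagonal term which dominates the cross term thanks to the $\lebe^{n+1}$-stabilisation.
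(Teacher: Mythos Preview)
Your approach is essentially the same as the paper's: test~\eqref{eq:EL2} with $\Delta_{s,-h}(\eta^{2}\Delta_{s,h}v_{j})$, use the $(n+1)$-stabilisation for coercivity, absorb the cross term, and pass to the limit via Fatou. The final step via the Saint-Venant identity $\partial_{i}\partial_{j}v^{k}=\partial_{i}\eps_{jk}+\partial_{j}\eps_{ik}-\partial_{k}\eps_{ij}$ is equivalent to the paper's appeal to Korn's inequality.

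There is one genuine gap. You claim that $\int_{\Omega}\langle\nabla g(v_{j}/M),\varphi_{h}/M\rangle\,\dif x$ is bounded uniformly in $h$ because $\nabla g(v_{j}/M)\in\lebe^{\infty}$. This would require a uniform $\lebe^{1}$-bound on $\varphi_{h}=\Delta_{s,-h}(\eta^{2}\Delta_{s,h}v_{j})$, which in turn needs $\|\eta^{2}\Delta_{s,h}\partial_{s}v_{j}\|_{\lebe^{1}}$ bounded independently of $h$; but that is precisely second-order information on $v_{j}$ you do not yet have. The fix is to perform the discrete integration by parts on the $g$-term as well, obtaining
\[
\int_{\Omega}\Big\langle \Delta_{s,h}\nabla g\Big(\frac{v_{j}}{M}\Big),\frac{\eta^{2}\Delta_{s,h}v_{j}}{M}\Big\rangle\,\dif x,
\]
which is \emph{non-negative} by convexity of $g$ (monotonicity of the gradient) and can therefore simply be dropped---this is what the paper does. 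Alternatively, since $g\in\hold^{2}(\ball_{2}(0))$ and $\|v_{j}/M\|_{\lebe^{\infty}}\leq 2-j^{-2}$, one has $|\Delta_{s,h}\nabla g(v_{j}/M)|\leq C_{j}|\Delta_{s,h}v_{j}|$ and the integral is then controlled by $\int\eta^{2}|\Delta_{s,h}v_{j}|^{2}\,\dif x$, bounded via $v_{j}\in\sobo^{1,n+1}$. Either route closes the gap; the sign argument is cleaner.

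A minor imprecision: the error term from the cross part carries the weight $(1+|\eps(v_{j})(x)|^{2}+|\eps(v_{j})(x+he_{s})|^{2})^{\frac{n-1}{2}}$ rather than $(1+|\eps(v_{j})|^{2})^{\frac{n-1}{2}}$, but after H\"older with exponents $\tfrac{n+1}{n-1}$ and $\tfrac{n+1}{2}$ this makes no difference.
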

\begin{proof} 
We take a point $x_0\in \Omega $, two radii $0<r<R<\dist(x_0,\del\Omega)$, and a localization function $\varrho\in \C^\infty_c(\Omega; [0,1])$ with $\1_{\ball_{r}(x_{0})}\leq \varrho\leq\1_{\ball_{R}(x_{0})}$. Furthermore, let $s\in\{1,\dots, n\}$ and  $0<h<\frac{1}{2}(\dist(x_0,\del\Omega)-R)$. We recall that by construction we have $v_{j}\in\W^{1,n+1}(\Omega;\R^{n})$ and we now employ the function $\varphi\coloneqq\Delta_{s,-h}(\varrho^{2}\Delta_{s,h}v_{j})\in \W_{0}^{1,n+1}(\Omega;\R^{n})$ in the Euler--Lagrange inequality~\eqref{eq:EL2} from Lemma~\ref{lem:Euler_Lagrange_inequality}. With the integration by parts formula for finite difference quotients and with $\eps(\varrho^2\Delta_{s,h}v_j) = \varrho^2 \eps(\Delta_{s,h}v_j) + 2\varrho\nabla\varrho \odot \Delta_{s,h} v_j $ this gives
\begin{align}
	\left\vert \int_{\Omega}\big\langle \Delta_{s,h}\nabla f_{j}(\eps(v_{j})),\varrho^2 \eps(\Delta_{s,h}v_j) + 2\varrho\nabla\varrho \odot \Delta_{s,h} v_j \big\rangle \dx{x} + \int_{\Omega} \Big\langle \Delta_{s,h}\nabla g\Big(\frac{v_{j}}{M}\Big),\frac{\varrho^{2}\Delta_{s,h}v_{j}}{M} \Big\rangle  \dx{x} \right\vert \notag \\
	\!\!\!\!\!\!\!\!\!\leq \frac{1}{j}\|\Delta_{s,-h}(\varrho^{2}\Delta_{s,h}v_{j}) \|_{\W^{-2,1}(\Omega;\R^{n})}.\label{EL_inequality_nonuniform}
\end{align}
In order to be able to conveniently use the growth conditions on $\nabla^2 f_j$, we rewrite 
\begin{align*}
		\Delta_{s,h}\nabla f_j(\eps(v_j))(x) 
		=
		\int_0^1 \nabla^2 f_j(\eps(v_j)(x) + th \Delta_{s,h} \eps(v_j)(x))\dx{t}\,\Delta_{s,h} \eps(v_j)(x)
\end{align*}
via the fundamental theorem of calculus, and we then define, for $\mathscr{L}^{n}$-a.e.\ $x\in\ball_{r}(x_{0})$, the bilinear forms $\mathscr{B}_{j,s,h}(x)\colon\rsym\times\rsym\to\R$ by
\begin{align*}
	\mathscr{B}_{j,s,h}(x)[\eta,\xi] & \coloneqq \int_{0}^{1}\big\langle \nabla^2f_{j}\big(\eps(v_{j})(x)+th\Delta_{s,h}\eps(v_{j})(x)\big)\eta,\xi\big\rangle\dx{t}\qquad \text{for all } \xi,\eta\in\rsym.
\end{align*}
We next show that each of these bilinear forms is positive definite (which in particular allows for the application of the Cauchy--Schwarz inequality), with suitable lower and upper bounds. As an upper bound we infer from the formula~\eqref{eq:nabla_2_f_j} for $\nabla^2 f_j(z)$ combined with assumption~\eqref{eq:generalisedaelliptic}
	\begin{align}
	\label{eq:bilinear_form_upper_bound}
	\mathscr{B}_{j,s,h}(x)[\xi,\xi] \nonumber
	&\leq 
	\Lambda \int_0^1  \frac{\abs{\xi}^2}{(1+\abs{\eps(v_j)(x)+th\Delta_{s,h}\eps(v_j)(x)}^2)^{\frac{1}{2}}}\dx{t}  \\
	&\qquad\qquad\qquad\qquad+
	\frac{n(n+1)}{2A_j j^2}  \int_0^1(1+\abs{\eps(v_j)(x)+th\Delta_{s,h}\eps(v_j)(x)}^2)^{\frac{n-1}{2}}\abs{\xi}^2\dx{t} \nonumber \\
	& \leq \Theta(n, \Lambda) \big(1+\abs{\eps(v_j)(x)}^2 + \abs{\eps(v_j)(x+he_s)}^2\big)^{\frac{n-1}{2}} \abs{\xi}^2 
	\end{align}
for a constant~$\Theta$ depending only on~$n$ and~$\Lambda$. Furthermore, using the inequality
\begin{align*}
	\tilde{\theta}(n)(1+|\xi|^{2}+|\eta|^{2})^{\frac{n-1}{2}} \leq \int_{0}^{1}(1+|\xi + t\eta|^{2})^{\frac{n-1}{2}}\dx{t}\qquad\text{for all}\;\xi,\eta\in\rsym
\end{align*}
from \cite[Lemma 2.VI]{CAMPANATO82a}, we get as a lower bound
\begin{equation*}
 \mathscr{B}_{j,s,h}(x)[\xi,\xi] \geq  \frac{\theta(n)}{A_{j} j^2}(1+|\eps(v_{j})(x)|^{2}+|\eps(v_{j})(x+he_{s})|^{2})^{\frac{n-1}{2}}  |\xi|^{2},
\end{equation*}
for a constant~$\theta$ depending only on~$n$. As a direct consequence of this lower bound we find 
\begin{align*}
 \mathrm{I}
	\coloneqq  \int_{\Omega}\big\langle \Delta_{s,h}\nabla f_{j}(\eps(v_{j})),\varrho^{2} \eps(\Delta_{s,h}v_{j})\big\rangle \dx{x}
	& = \int_\Omega \mathscr{B}_{j,s,h}(x)	[\varrho \eps(\Delta_{s,h} v_j), \varrho \eps(\Delta_{s,h}v_j)]\dx{x} \\
	& \geq \frac{\theta(n)}{A_{j} j^2} \int_\Omega (1+|\eps(v_{j})(x)|^{2})^{\frac{n-1}{2}} |\varrho\eps(\Delta_{s,h}v_{j})|^{2}\dx{x} .
\end{align*}
In a very similar way, we obtain from the convexity of the auxiliary function~$g$ 
\begin{align*}
	\int_{\Omega}\Big\langle \Delta_{s,h}\nabla g\Big(\frac{v_{j}}{M}\Big),\frac{\varrho^{2}\Delta_{s,h}v_{j}}{M}\Big\rangle\dx{x} \geq 0. 
\end{align*}
This inequality allows us to take advantage of~\eqref{EL_inequality_nonuniform} in order to get
\begin{equation*}
 \mathrm{I}
	\leq - \int_{\Omega} \mathscr{B}_{j,s,h}(x)[\varrho \eps(\Delta_{s,h}v_j), 2\nabla \varrho\odot \Delta_{s,h}v_j]\dx{x} 
	+
	\frac{1}{j}\|\Delta_{s,-h} (\varrho^2 \Delta_{s,h}v_j)\|_{\W^{-2,1}(\Omega;\R^n)}.
\end{equation*}
Via the Cauchy--Schwarz inequality we can reproduce term~$\mathrm{I}$ (with prefactor~$\tfrac{1}{2}$) on the right-hand side, and after absorbing it on the left-hand side, we arrive at 
\begin{equation*}
 \frac{1}{2} \mathrm{I} \leq \frac{1}{2} \int_\Omega \mathscr{B}_{j,s,h}(x)[2\nabla \varrho \odot\Delta_{s,h}v_j, 2\nabla \varrho \odot\Delta_{s,h}v_j]\dx{x}
	+\frac{1}{j}\|\Delta_{s,-h} (\varrho^2 \Delta_{s,h}v_j)\|_{\W^{-2,1}(\Omega;\R^n)} \coloneqq \mathrm{II} + \mathrm{III}.
\end{equation*}
Via Hölder's inequality, the upper bound~\eqref{eq:bilinear_form_upper_bound} and standard properties of finite difference quotients (see e.g.~\cite[Chapter 7.11]{GILTRU77}) we next estimate
\begin{align*}
 \mathrm{II} & \leq 2 \Theta(n,\Lambda) \int_{\Omega} \big(1+\abs{\eps(v_j)(x)}^2 + \abs{\eps(v_j)(x+he_s)}^2)^{\frac{n-1}{2}} |\nabla\varrho\odot\Delta_{s,h}v_{j}|^{2} \dx{x} \\
 & \leq C(n,\Lambda) \|\nabla \varrho \|_{\L^\infty(\Omega;\R^n)}^2 \| 1 + \abs{\eps(v_j)} \|_{\L^{n+1}(\Omega)}^{n-1} \| \partial_s v_j \|_{\L^{n+1}(\Omega;\R^{n})}^2,
\end{align*}
which, in view of $v_{j}\in\W^{1,n+1}(\Omega;\R^{n})$, provides a bound for~$\mathrm{II}$ that is independent of~$h$. Finally, by inequality~\eqref{eq:neg_Sob_3} for negative Sobolev spaces, we have 
\begin{equation*}
 \mathrm{III} \leq \frac{1}{j} \| \varrho^2 \Delta_{s,h} v_{j} \|_{\L^1(\Omega;\R^n)} \leq \| \partial_s v_j \|_{\L^{1}(\Omega;\R^{n})}.
\end{equation*}
Collecting the estimates for $\mathrm{I}$ from below and above, we obtain  
	\begin{align}
	\label{eq:diff-quot-bounded-H}
	\int_{\Omega}|\varrho\eps(\Delta_{s,h}v_{j})|^{2}\dx{x} \leq \int_{\Omega}(1+|\eps(v_{j})(x)|^{2})^{\frac{n-1}{2}}|\varrho\eps(\Delta_{s,h}v_{j})|^{2}\dx{x} \leq C, 
	\end{align} 
where the constant~$C$ depends on $n, \Omega, \Lambda, j, A_j$ and $\|v_j\|_{\W^{1,n+1}(\Omega;\R^n)}$, but not on~$h$. Therefore, the family $(\Delta_{s,h}\eps(v_{j}))_{h}$ is bounded uniformly in $\L^2(\ball_r(x_0);\R^{n \times n})$, which implies that $\partial_s \eps(v_{j})$ exists in $\L^2(\ball_r(x_0);\R^{n \times n})$.
By arbitrariness of $s \in \{1,\ldots,n\}$, $x_0 \in \Omega$ and $0<r<R<\dist(x_0,\del\Omega)$, Korn's inequality shows $v_{j}\in \W_{\loc}^{2,2}(\Omega;\R^{n})$. Furthermore, with the strong convergence $\Delta_{s,h}\eps(v_{j})\to \partial_{s}\eps(v_{j})$ in $\L^{2}(K;\rsym)$ for a given compactly supported set $K \subset \Omega$, we can select a suitable sequence $(h_{i})_{i\in\N}$ in $\R_{>0}$ with $h_{i}\searrow 0$ and $\Delta_{s,h_{i}}\eps(v_{j})\to \partial_{s}\eps(v_{j})$ $\mathscr{L}^{n}$-a.e. in $K$. Then, by Fatou's lemma, we have 
	\begin{align*}
	\int_{K}(1+|\eps(v_{j})|^{2})^{\frac{n-1}{2}}|\partial_{s}\eps(v_{j})|^{2}\dx{x} \leq \liminf_{i\to\infty} \int_{K}(1+|\eps(v_{j})|^{2})^{\frac{n-1}{2}}|\Delta_{s,h_{i}}\eps(v_{j})|^{2}\dx{x}.
	\end{align*}
Since the right-hand side is bounded by the previous reasoning, we have shown also the second assertion $(1+|\eps(v_{j})|^{2})^{\frac{n-1}{2}}|\partial_{s}\eps(v_{j})|^{2}\in \L_{\loc}^{1}(\Omega)$, which completes the proof of the lemma.
\end{proof}

As a direct consequence of the second order estimates from Lemma~\ref{lem:higherregularityapproximate}, we next show that also a differentiated version of the Euler--Lagrange inequality~\eqref{eq:EL2} is at our disposal.

\begin{lemma}[Differentiated Euler--Lagrange inequality]\label{lem:ELbetter}
	Let $f\in\C^{2}(\rsym)$ satisfy~\eqref{eq:lingrowth1} and, for some $\Lambda\in (0,\infty)$, the bound~\eqref{eq:generalisedaelliptic}. For every $j \geq j_0$ with $j_0\in\mathds{N}$ as in Proposition~\ref{lem:ELperutrbed} we have $\sigma_j\in \W^{1,\frac{n+1}{n}}_\loc(\Omega; \R^{n\times n}_\sym)$ and, for all $s\in\{1,\ldots,n\}$ and $\varphi\in\W_{c}^{1,n+1}(\Omega;\R^{n})$, there holds
		\begin{align}\label{eq:betterEL2}
		\left\vert\int_{\Omega} \langle\partial_{s}\sigma_{j},\eps(\varphi) \rangle\dx{x} + \int_{\Omega}\Big\langle \partial_{s}\Big(\nabla g\Big(\frac{v_{j}}{M} \Big)\Big) ,\frac{\varphi}{M}\Big\rangle\dx{x} \right\vert \leq \frac{1}{j}\|\varphi\|_{\W^{-1,1}(\Omega;\R^{n})}. 
		\end{align}
	\end{lemma}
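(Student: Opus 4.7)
The plan is to test the Euler--Lagrange inequality~\eqref{eq:EL2} against backward difference quotients and pass to the limit via a discrete integration by parts, after first verifying enough Sobolev regularity of $\sigma_{j}$ and $\nabla g(v_{j}/M)$ to justify the passage.

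\emph{Step 1: Sobolev regularity of $\sigma_{j}$.} Since Lemma~\ref{lem:higherregularityapproximate} provides $v_{j}\in\W^{2,2}_{\loc}(\Omega;\R^{n})$, the chain rule gives $\partial_{s}\sigma_{j}=\nabla^{2}f_{j}(\eps(v_{j}))\partial_{s}\eps(v_{j})$ a.e. Using~\eqref{eq:nabla_2_f_j} together with~\eqref{eq:generalisedaelliptic}, one obtains the pointwise bound
\begin{align*}
|\partial_{s}\sigma_{j}|\leq \Lambda\,\frac{|\partial_{s}\eps(v_{j})|}{(1+|\eps(v_{j})|^{2})^{1/2}}+\frac{C(n)}{A_{j}j^{2}}\,(1+|\eps(v_{j})|^{2})^{\frac{n-1}{2}}|\partial_{s}\eps(v_{j})|.
\end{align*}
The first summand is in $\L^{2}_{\loc}(\Omega)$. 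For the second, I would split $(1+|\eps(v_{j})|^{2})^{(n-1)/2}|\partial_{s}\eps(v_{j})|=(1+|\eps(v_{j})|^{2})^{(n-1)/4}\cdot\bigl[(1+|\eps(v_{j})|^{2})^{(n-1)/4}|\partial_{s}\eps(v_{j})|\bigr]$ and apply Hölder with exponents $2(n+1)/(n-1)$ and $2$. Since $v_{j}\in\W^{1,n+1}(\Omega;\R^{n})$ and the weighted estimate~\eqref{eq:nonunifreg2} holds, the resulting product lies in $\L^{(n+1)/n}_{\loc}(\Omega)$, whence $\sigma_{j}\in\W^{1,(n+1)/n}_{\loc}(\Omega;\R^{n\times n}_{\sym})$. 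Similarly, since $\|v_{j}/M\|_{\lebe^{\infty}}\leq 2-j^{-2}<2$ by~\eqref{v_j_estimate_bounded_strict} and $g\in\hold^{2}(\ball_{2}(0))$, we get $\partial_{s}(\nabla g(v_{j}/M))=\nabla^{2}g(v_{j}/M)\partial_{s}v_{j}/M\in\L^{n+1}_{\loc}(\Omega;\R^{n})$.

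\emph{Step 2: Passage from finite differences to derivatives.} Fix $\varphi\in\W^{1,n+1}_{c}(\Omega;\R^{n})$ and let $h>0$ be smaller than $\dist(\spt\varphi,\partial\Omega)$, so that $\Delta_{s,-h}\varphi\in\W^{1,n+1}_{0}(\Omega;\R^{n})$. Inserting this test function into~\eqref{eq:EL2} and applying the discrete integration by parts formula yields
\begin{align*}
\left\vert -\int_{\Omega}\langle\Delta_{s,h}\sigma_{j},\eps(\varphi)\rangle\dx{x}-\int_{\Omega}\Bigl\langle\Delta_{s,h}\nabla g\Bigl(\frac{v_{j}}{M}\Bigr),\frac{\varphi}{M}\Bigr\rangle\dx{x}\right\vert\leq \frac{1}{j}\|\Delta_{s,-h}\varphi\|_{\W^{-2,1}(\Omega;\R^{n})}.
\end{align*}
By Step~1, $\Delta_{s,h}\sigma_{j}\to\partial_{s}\sigma_{j}$ in $\L^{(n+1)/n}(\spt\varphi;\R^{n\times n}_{\sym})$ and $\Delta_{s,h}\nabla g(v_{j}/M)\to\partial_{s}(\nabla g(v_{j}/M))$ in $\L^{n+1}(\spt\varphi;\R^{n})$ as $h\searrow 0$. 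Together with $\eps(\varphi)\in\L^{n+1}$ and $\varphi\in\L^{(n+1)/n}$, both left-hand integrals converge to the corresponding expressions in~\eqref{eq:betterEL2}.

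\emph{Step 3: Bounding the right-hand side.} The remaining point is to show $\|\Delta_{s,-h}\varphi\|_{\W^{-2,1}(\Omega;\R^{n})}\leq\|\varphi\|_{\W^{-1,1}(\Omega;\R^{n})}$ uniformly in $h$. For this, I would use the identity
\begin{align*}
\Delta_{s,-h}\varphi=\partial_{s}\psi_{h},\qquad \psi_{h}(x)\coloneqq-\int_{0}^{1}\varphi(x-the_{s})\dx{t},
\end{align*}
so that~\eqref{eq:neg_Sob_1} applied to $\psi_{h}$ gives $\|\Delta_{s,-h}\varphi\|_{\W^{-2,1}}\leq\|\psi_{h}\|_{\W^{-1,1}}$. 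Writing any admissible representation $\varphi=\sum_{|\alpha|\leq 1}\partial^{\alpha}w_{\alpha}$ from the definition of the $\W^{-1,1}$-norm, the same integral-average commutes with the derivatives and yields a representation of $\psi_{h}$ with $\L^{1}$-components of norm at most those of $w_{\alpha}$; taking the infimum gives $\|\psi_{h}\|_{\W^{-1,1}}\leq\|\varphi\|_{\W^{-1,1}}$. Combining the convergences of Step~2 with this bound on the right-hand side produces~\eqref{eq:betterEL2}.

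The main technicality lies in Step~1: the integrability exponent $(n+1)/n$ is dictated precisely by pairing the weighted bound $(1+|\eps(v_{j})|^{2})^{(n-1)/2}|\partial_{s}\eps(v_{j})|^{2}\in\L^{1}_{\loc}$ from Lemma~\ref{lem:higherregularityapproximate} with the $\L^{n+1}$-information on $\eps(v_{j})$ coming from the $\W^{1,n+1}$-stabilisation. Everything else reduces to a routine difference quotient / duality argument.
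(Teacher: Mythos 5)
Your Step 1 matches the paper's argument in substance (the weighted bound from Lemma~\ref{lem:higherregularityapproximate} paired with $\W^{1,n+1}$-integrability of $\eps(v_j)$ via Hölder gives $\sigma_j\in\W^{1,(n+1)/n}_\loc$; your exponents $\tfrac{2(n+1)}{n-1}$ and $2$ are the same computation in disguise), and Step~2 via backward difference quotients is a legitimate alternative to the paper's route. The paper, however, skips the finite differences entirely: once $\sigma_j\in\W^{1,(n+1)/n}_\loc$ is known from Step~1, it simply tests~\eqref{eq:EL2} with $\partial_s\varphi$ for $\varphi\in\hold_c^\infty(\Omega;\R^n)$, integrates by parts, applies~\eqref{eq:neg_Sob_1} directly, and then approximates to reach general $\varphi\in\W^{1,n+1}_c(\Omega;\R^n)$. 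Your detour through $\Delta_{s,-h}\varphi$ is therefore unnecessary and re-introduces a subtlety that the paper's proof deliberately avoids.

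The genuine gap is in Step~3. Your claimed uniform bound $\|\Delta_{s,-h}\varphi\|_{\W^{-2,1}(\Omega;\R^n)}\leq\|\varphi\|_{\W^{-1,1}(\Omega;\R^n)}$ is justified by transporting an admissible representation $\varphi=\sum_{|\alpha|\leq 1}\partial^\alpha w_\alpha$ under the averaging $\psi_h(x)=\int_0^1\varphi(x-the_s)\dx{t}$. But these representations are distributional identities \emph{on $\Omega$}, and the translated/averaged functions $\widetilde{w}_\alpha(x)=\int_0^1 w_\alpha(x-the_s)\dx{t}$ do not in general satisfy $\psi_h=\sum_{|\alpha|\leq 1}\partial^\alpha\widetilde{w}_\alpha$ in $\mathscr{D}'(\Omega)$: testing against $\chi\in\hold_c^\infty(\Omega)$ produces the translated test function $\chi(\cdot+the_s)$, which need not have compact support in $\Omega$, so the identity $\varphi=\sum\partial^\alpha w_\alpha$ cannot be applied to it. The paper's own estimate~\eqref{eq:neg_Sob_3} is stated only on the shrunken domain $\{\dist(\cdot,\partial\Omega)>h\}$ precisely because of this boundary obstruction. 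The fix is easy and avoids the issue: since $\varphi\in\W^{1,n+1}_c(\Omega;\R^n)\subset\W^{1,1}(\Omega;\R^n)$, one has $\Delta_{s,-h}\varphi\to\partial_s\varphi$ strongly in $\L^1(\Omega;\R^n)$ and hence in $\W^{-2,1}(\Omega;\R^n)$ as $h\searrow 0$, so $\|\Delta_{s,-h}\varphi\|_{\W^{-2,1}}\to\|\partial_s\varphi\|_{\W^{-2,1}}\leq\|\varphi\|_{\W^{-1,1}}$ by~\eqref{eq:neg_Sob_1}; no per-$h$ bound is needed. This is, in effect, re-deriving the paper's shortcut of testing with $\partial_s\varphi$ directly.
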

\begin{proof}
    We first show $\sigma_j\in \W^{1,\frac{n+1}{n}}_\loc(\Omega;\R^{n\times n}_\sym)$ (which in particular proves that the first integral in~\eqref{eq:betterEL2} is well-defined). With $v_j\in \W^{2,2}_\loc(\Omega;\R^n)$ due to Lemma~\ref{lem:higherregularityapproximate}, we observe that~$\sigma_j$ is weakly differentiable with $\del_s\sigma_j = \nabla^2f_j(\eps(v_j))\del_s \eps(v_j)$ for  $s\in\{1,\ldots,n\}$. Since $\abs{\nabla^2f_j(z))}\leq C(n,\Lambda)(1+\abs{z}^2)^{\frac{n-1}{2}}$ holds in view of~\eqref{eq:nabla_2_f_j} for all $z\in \R^{n\times n}_\sym$, we obtain from Hölder's inequality, applied with exponents $\tfrac{2n}{n-1}$ and $\tfrac{2n}{n+1}$, the estimate
	\begin{align*}
	\|\partial_s \sigma_j \|^{\frac{n+1}{n}}_{\L^{\frac{n+1}{n}}(K;\rsym)}
	& \leq \int_K (1+\abs{\eps(v_j)}^2)^{\frac{n-1}{2}\frac{n+1}{n}}\abs{\del_s\eps(v_j)}^{\frac{n+1}{n}}\dx{x}\\
	&= \int_K (1+\abs{\eps(v_j)}^2)^{\frac{n-1}{4}\frac{n+1}{n}}(1+\abs{\eps(v_j)}^2)^{\frac{n-1}{4}\frac{n+1}{n}}\abs{\del_s\eps(v_j)}^{\frac{n+1}{n}}\dx{x}\\
	&\leq \left( \int_K (1+\abs{\eps(v_j)}^2)^{\frac{n+1}{2}}\dx{x}\right)^{\frac{n-1}{2n}}
	\left(\int_K (1+\abs{\eps(v_j)^2})^{\frac{n-1}{2}}\abs{\del_s \eps(v_j)}^2\dx{x} \right)^{\frac{n+1}{2n}}
	\end{align*}
	for every compact subset $K \subset \Omega$. In view of  $v_{j}\in\W^{1,n+1}(\Omega;\R^{n})$ and with the local integrability~\eqref{eq:nonunifreg2} from Lemma~\ref{lem:higherregularityapproximate}, all terms on the right-hand side are finite, which proves the first claim $\sigma_j\in \W^{1,\frac{n+1}{n}}_\loc(\Omega; \R^{n\times n}_\sym)$. We next test the Euler--Lagrange inequality~\eqref{eq:EL2} from Lemma~\ref{lem:Euler_Lagrange_inequality} with $\partial_s \varphi$ for an arbitrary function $\varphi \in \hold^\infty_c(\Omega;\R^n)$. Via the integration by parts formula and the first  inequality in~\eqref{eq:neg_Sob_1}, we then find 
	\begin{multline*}
	\left\vert\int_{\Omega}\langle\partial_{s}\sigma_{j},\eps(\varphi)\rangle\dx{x} + \int_{\Omega}\Big\langle \partial_{s}\Big(\nabla g\Big(\frac{v_{j}}{M} \Big)\Big) ,\frac{\varphi}{M}\Big\rangle\dx{x} \right\vert \\
	= \left\vert \int_{\Omega} \big\langle \nabla f _{j}(\eps(v_{j})),\eps(\partial_s \varphi) \big\rangle + \Big\langle \nabla g\Big(\frac{v_{j}}{M}\Big),\frac{\partial_s \varphi}{M}\Big\rangle\dx{x}\right\vert \leq \frac{1}{j}\|\partial_s \varphi\|_{\W^{-2,1}(\Omega;\R^{n})} \leq \frac{1}{j}\|\varphi\|_{\W^{-1,1}(\Omega;\R^{n})}.
	\end{multline*}
    We finally notice that this inequality holds by an approximation argument also for all functions $\varphi\in\W_{c}^{1,n+1}(\Omega;\R^{n})$, where we take advantage of $\sigma_j\in \W{_{\locc}^{1,\frac{n+1}{n}}}(\Omega; \R^{n\times n}_\sym)$, 
	$g\in \C^2(\ball_2(0))$, \eqref{eq:gdefLinfty} and~\eqref{v_j_estimate_bounded_strict} for the left-hand side, and of the embedding $\W^{1,n+1}(\Omega;\R^n)\hookrightarrow \W^{-1,1}(\Omega;\R^n)$ for the right-hand side. This finishes the proof of the lemma.
\end{proof}
\subsection{Uniform degenerate second order estimates}
\label{subsection:second_order_uniform}

We next improve on the second order estimates of the approximating sequence $(v_j)_{j \in \N}$ and derive estimates, which are uniform in $j \in \N$. These estimates will be the essential ingredient for proving the superlinear estimates on the gradients $(\nabla v_j)_{j \in \N}$ in the next Section~\ref{section:proof_main_theorem}.

\begin{theorem}[Uniform second order estimates]\label{thm:uniformSecondOrder}
	Let $f\in\C^{2}(\rsym)$ satisfy~\eqref{eq:lingrowth1} and, for some $\Lambda\in (0,\infty)$, the bound~\eqref{eq:generalisedaelliptic}. Then there exists a constant $c=c(n,\Gamma,\Lambda,M)$ such that for every ball $\ball_{2r}(x_{0})\Subset\Omega$ and every function $\varrho\in\C_{c}^{\infty}(\B_{2r}(x_{0});[0,1])$ with $\1_{\B_{r}(x_{0})}\leq \varrho \leq \1_{\B_{2r}(x_{0})}$  and $\abs{\nabla^s\varrho}\leq\left(\frac{2}{r}\right)^s$ for $s\in\{1,2,3\}$ there holds  
	\begin{multline}\label{eq:dualauxest}
	\sum_{k=1}^{n}\int_{\ball_{2r}(x_{0})} \varrho^4 \big\langle \nabla^2f_{j}(\eps(v_{j}))\partial_{k}\eps(v_{j}),\partial_{k}\eps(v_{j})\big\rangle\dx{x} \\
    \leq 
	\frac{c}{r^2} \bigg[ \int_{\ball_{2r}(x_{0})} \Big(\frac{1}{r}+\frac{1}{r^2} + \frac{r^2}{j} + \abs{\eps(v_j)} \Big) \dx{x} + \frac{1}{A_j j^2}\int_{\ball_{2r}(x_{0})}\Big(1+\frac{1}{r^2} + \abs{\eps(v_j)}^2\Big)^{\frac{n+1}{2}} \dx{x} \bigg] 
	\end{multline}
	for all $j\geq j_0$ with $j_0 \in \N$ as in Proposition~\ref{lem:ELperutrbed}. 
\end{theorem}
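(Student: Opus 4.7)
The plan is to use the differentiated Euler--Lagrange inequality~\eqref{eq:betterEL2} tested against a function whose symmetric gradient reproduces a weighted version of $\sum_{s}\partial_s\eps(v_j)$. The canonical choice is $\varphi_s=\varrho^{4}\partial_s v_j$ (admissible thanks to $v_j\in \W^{1,n+1}(\Omega;\R^n)$ and Lemma~\ref{lem:higherregularityapproximate}; strictly speaking one first employs the difference quotients $\Delta_{s,h}v_j$ and then passes to the limit $h\searrow 0$ by means of the bound~\eqref{eq:nonunifreg2} exactly as in the proof of Lemma~\ref{lem:higherregularityapproximate}). Since $\eps(\varphi_s)=\varrho^{4}\partial_s\eps(v_j)+4\varrho^{3}\nabla\varrho\odot\partial_s v_j$, summing over $s\in\{1,\dots,n\}$ and reorganising~\eqref{eq:betterEL2} gives the schematic inequality
\[
\mathrm{Ell}_j + \mathcal{G}_j \leq |\mathcal{P}_j| + \tfrac{1}{j}\sum_{s=1}^{n}\|\varrho^{4}\partial_s v_j\|_{\W^{-1,1}(\Omega;\R^{n})},
\]
where $\mathrm{Ell}_j$ is precisely the left-hand side of~\eqref{eq:dualauxest}, $\mathcal{G}_j=\frac{1}{M^{2}}\sum_s\int\varrho^{4}\langle\nabla^{2}g(v_j/M)\partial_s v_j,\partial_s v_j\rangle\dx{x}$ is non-negative by convexity of $g$ and can be dropped, and $\mathcal{P}_j=4\sum_s\int\varrho^{3}\langle\partial_s\sigma_j,\nabla\varrho\odot\partial_s v_j\rangle\dx{x}$ is the commutator pollution produced by the cutoff.

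\textbf{Estimating the Ekeland defect.} To estimate the right-hand side, write $\varrho^{4}\partial_s v_j=\partial_s(\varrho^{4}v_j)-4\varrho^{3}(\partial_s\varrho)v_j$ and invoke~\eqref{eq:neg_Sob_2} together with the uniform bound $\|v_j\|_{\L^{\infty}(\Omega;\R^{n})}\leq 2M$ from~\eqref{v_j_estimate_bounded}. The resulting estimate $\|\varrho^{4}\partial_s v_j\|_{\W^{-1,1}(\Omega;\R^n)}\leq C(n,M)(r^{n}+r^{n-1})$, divided by $j$ and combined with the external factor $r^{-2}$ of~\eqref{eq:dualauxest}, is exactly what produces the $\frac{r^{2}}{j}$ contribution inside the right-hand side of~\eqref{eq:dualauxest}.

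\textbf{Controlling the pollution.} This is the heart of the argument. Since Ornstein's non-inequality forbids pointwise control of $\partial_s v_j$ by $\eps(v_j)$, one cannot bound $\mathcal{P}_j$ directly by the ellipticity on the left. Instead, exploit the algebraic identity
\[
\nabla\varrho\odot\partial_s v_j=\partial_s(\nabla\varrho\odot v_j)-(\partial_s\nabla\varrho)\odot v_j,
\]
in combination with an integration by parts in $x_s$ that transfers $\partial_s$ from $\sigma_j$ onto the factor $\varrho^{3}\nabla\varrho$. What remains involves $\sigma_j$ \emph{without} $x$-derivatives, multiplied by $v_j$ and derivatives of $\varrho$ of order up to two. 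Using the pointwise bound $|\sigma_j|\leq\Gamma+(\text{stabilisation correction from}~\eqref{eq:nabla_f_j})$ provided by Lemma~\ref{lem:boundbelow}\ref{item:boundbelow2}, the $\L^{\infty}$-bound~\eqref{v_j_estimate_bounded} for $v_j$, and the elementary estimates $|\nabla\varrho|\leq 2/r$, $|\nabla^{2}\varrho|\leq 4/r^{2}$, these pieces fit into the $r^{-2}\int|\eps(v_j)|\dx{x}$- and $r^{-2}(A_j j^{2})^{-1}\int(1+|\eps(v_j)|^{2})^{(n+1)/2}\dx{x}$-buckets of~\eqref{eq:dualauxest}, while the $r^{-3}$ and $r^{-4}$ prefactors arise from the cutoff Hessians. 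The remaining skew-symmetric residue is absorbed via a Cauchy--Schwarz in the bilinear form induced by $\nabla^{2} f_j$ coupled with Young's inequality, where one half of the resulting ellipticity is reabsorbed into $\mathrm{Ell}_j$ on the left-hand side.

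\textbf{Main obstacle.} The genuine difficulty, flagged in item~\ref{introc1} of the introduction, is that the re-introduction of symmetric gradients in the above manipulation generates skew-symmetric residue pieces whose disposal requires appealing to the \emph{original} Euler--Lagrange inequality~\eqref{eq:EL2}. This however pulls in the $\L^{\infty}$-penalisation term $\int\langle \nabla g(v_j/M),\psi\rangle\dx{x}$ with $\psi$ carrying $\nabla\varrho\otimes v_j$-type factors, and the available a priori bounds do not suffice to handle it. The resolution crucially relies on the quantified strictness bound $\|v_j\|_{\L^{\infty}(\Omega;\R^n)}\leq (2-j^{-2})M$ from~\eqref{v_j_estimate_bounded_strict}, which ensures that $v_j/M$ stays strictly inside the $\C^{2}$-region of $g$ with a \emph{quantified} margin, so that $\nabla g(v_j/M)$ is pointwise and controllably bounded. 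This interplay between the differentiated and the original Euler--Lagrange inequalities, facilitated by the quantitative $\L^{\infty}$-margin from Proposition~\ref{lem:ELperutrbed}, is what ultimately closes the estimate~\eqref{eq:dualauxest}.
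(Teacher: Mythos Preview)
Your overall architecture is right and matches the paper: one tests the differentiated Euler--Lagrange inequality~\eqref{eq:betterEL2} with $\varphi=\varrho^{4}\partial_{k}v_{j}$ (via difference quotients and the weighted Hilbert space argument), drops the $g$-contribution by convexity, and controls the Ekeland defect exactly as you indicate. The gap is in your treatment of the pollution term $\mathcal{P}_{j}$.

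First, the manipulation you describe does not do what you claim. Applying the identity $\nabla\varrho\odot\partial_{s}v_{j}=\partial_{s}(\nabla\varrho\odot v_{j})-(\partial_{s}\nabla\varrho)\odot v_{j}$ and then integrating by parts in $x_{s}$ to remove $\partial_{s}$ from $\sigma_{j}$ lands you on terms containing either $\partial_{s}(\nabla\varrho\odot v_{j})$ (still a full gradient of $v_{j}$) or $\partial_{s}^{2}(\nabla\varrho\odot v_{j})$ (a second derivative of $v_{j}$). Your assertion that ``what remains involves $\sigma_{j}$ without $x$-derivatives, multiplied by $v_{j}$ and derivatives of $\varrho$ of order up to two'' is therefore not justified.

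Second, and more seriously, your proposed resolution of the ``Main obstacle'' is precisely the route the paper rules out in Remark~\ref{rem:strucproof}. You argue that the quantified bound $\|v_{j}\|_{\lebe^{\infty}}\leq(2-j^{-2})M$ makes $\nabla g(v_{j}/M)$ ``pointwise and controllably bounded'', so that one may invoke the undifferentiated inequality~\eqref{eq:EL2} to dispose of residual terms. But the margin $j^{-2}$ shrinks to zero, and $g$ is constructed (cf.~\eqref{eq:subtlehconstruction}) so that it and its derivatives blow up as $|y|\to 2^{-}$. Hence $|\nabla g(v_{j}/M)|$ admits \emph{no} bound uniform in $j$, and the compensating term $\mathrm{II}_{1}^{b}$ of Remark~\ref{rem:strucproof} cannot be controlled. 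The strict bound~\eqref{v_j_estimate_bounded_strict} is needed only to make sense of~\eqref{eq:EL2} for fixed $j$; it does not yield uniform control of $\nabla g$.

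What the paper actually does for $\mathcal{P}_{j}$ (called $A$ there) is a specific three-term splitting $A=\mathrm{I}+\mathrm{II}+\mathrm{III}$ exploiting the symmetry $\sigma_{j}^{(im)}=\sigma_{j}^{(mi)}$. In $\mathrm{I}=\mathrm{III}$ the $v_{j}$-derivatives appear only in the combination $\partial_{k}v_{j}^{(m)}+\partial_{m}v_{j}^{(k)}=2\eps^{(km)}(v_{j})$, so Cauchy--Schwarz in the $\nabla^{2}f_{j}$-form plus Lemma~\ref{lem:TechnicalEstimate} suffices. The dangerous term $\mathrm{II}=\sum\int\partial_{k}\sigma_{j}^{(im)}\,\partial_{i}\varrho^{4}\,\partial_{m}v_{j}^{(k)}\dx{x}$ is handled by a \emph{double} integration by parts (first in $x_{k}$, then in $x_{m}$), after which the only $v_{j}$-derivatives left are $\partial_{k}v_{j}^{(k)}$, whose sum over $k$ is $\Div(v_{j})$ and hence bounded by $|\eps(v_{j})|$. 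The resulting four pieces $\mathrm{II}_{1},\ldots,\mathrm{II}_{4}$ contain either $\sigma_{j}$ alone (trivially bounded via Lemma~\ref{lem:boundbelow}) or $\partial_{\ell}\sigma_{j}$ paired with $(\nabla^{2}\varrho^{4}v_{j})\odot e_{\ell}$ or $(\nabla\varrho^{4}\Div v_{j})\odot e_{\ell}$, both of which are absorbable by Cauchy--Schwarz and Lemma~\ref{lem:TechnicalEstimate}. This is the missing mechanism in your proposal; without it the full-gradient pollution cannot be eliminated.
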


\begin{remark}
\label{remark:uniform_second_order_estimates}
We notice that, via the estimates~\eqref{v_j_estimate_1} and~\eqref{v_j_estimate_n} in the construction of the Ekeland-type approximation sequence $(v_j)_{j \in \N}$, all terms on the right-hand side of the estimate~\eqref{eq:dualauxest} of Theorem~\ref{thm:uniformSecondOrder} are bounded uniformly in $j \in \N$. If, in addition, the condition~\eqref{eq:MuEllipticity} of $\mu$-ellipticity is assumed, we deduce from the formula~\eqref{eq:nabla_2_f_j} for $\nabla^2 f_j(z)$ in particular the uniform weighted second order estimate
\begin{align*}
 \lefteqn{\int_{\ball_{2r}(x_{0})} \varrho^4 (1+\abs{\eps(v_j)}^2)^{-\frac{\mu}{2}} |\nabla \eps(v_j)|^2 \dx{x} + \frac{1}{2A_{j}j^{2}}  \int_{\ball_{2r}(x_{0})} \varrho^4 (1+\abs{\eps(v_j)}^2)^{\frac{n-1}{2}} |\nabla \eps(v_j)|^2 \dx{x}}  \\
  & \leq \frac{c(n,\Gamma,\lambda,\Lambda,M)}{r^{2}}  \bigg[ \int_{\ball_{2r}(x_{0})} \abs{\eps(v_j)} \dx{x} + \Big( \frac{1}{r} +\frac{1}{r^2} + \frac{r^2}{j} \Big) \mathscr{L}^{n}(\ball_{2r}(x_0)) \\ 
  & \hspace{7cm} + \frac{1}{A_j j^2}\int_{\ball_{2r}(x_{0})}(1+ \abs{\eps(v_j)}^2)^{\frac{n+1}{2}} \dx{x} + \frac{1}{j^2} \frac{1}{r} \bigg] \\
  & \leq \frac{c(n,\gamma,\Gamma,\lambda,\Lambda,M)}{r^{2}}\bigg[\inf_{\mathscr{D}_{u_{0}}}F[-;\Omega] + \Big( 1 + \frac{r^2}{j} \Big) \mathscr{L}^{n}(\Omega)+ 1 +  \frac{1}{j^{2}} \Big( 1 + \frac{1}{r} \Big) \bigg].
\end{align*}
We further observe that the seemingly unnatural scaling in~$r$ for the right-hand side vanishes in the limit $j \to \infty$.
\end{remark} 

In the subsequent proof of Theorem~\ref{thm:uniformSecondOrder} we will use several times estimates of the same type. To shorten the arguments within that proof we combine them in the following technical lemma. 

\begin{lemma}\label{lem:TechnicalEstimate}
	 Let $f\in\C^{2}(\rsym)$ satisfy, for some $\Lambda\in (0,\infty)$, the bound~\eqref{eq:generalisedaelliptic}. Then, for vectors $w, e \in \R^n$, a matrix $M \in \R^{n\times n}$ and every $z\in\rsym$ there holds 
	\begin{align*}
		\langle \nabla^2f_j(z)((M w) \odot e), ((Mw)\odot e)\rangle 
		\leq \abs{M}^2 & \bigg[\Lambda \frac{\abs{w}^2 \abs{e}^2}{(1+\abs{z}^2)^{\frac{1}{2}}}\bigg. \\ & \bigg. \;\;\;\;\;\;\;+
		\frac{c(n)}{A_j j^2}  
		\left((1+\abs{z}^2)^{\frac{n+1}{2}} +\abs{w}^{n+1} \abs{e}^{n+1}\right)\bigg].
	\end{align*}
\end{lemma}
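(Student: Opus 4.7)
The strategy is to split $\nabla^{2}f_{j}(z)$ into the contribution from $\nabla^{2}f(z)$ and the contribution from the stabilisation, and to estimate each term separately using the specific algebraic structure of $(Mw)\odot e$.

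First, I abbreviate $\xi \coloneqq (Mw)\odot e \in \rsym$. By the definition of the symmetric tensor product and the submultiplicativity of the operator norm, one has the elementary bound $|\xi|\leq |Mw|\,|e|\leq |M|\,|w|\,|e|$, so in particular $|\xi|^{2}\leq |M|^{2}|w|^{2}|e|^{2}$. Using~\eqref{eq:generalisedaelliptic} with this choice of $\xi$ immediately yields the first term on the right-hand side, namely
\begin{equation*}
\langle \nabla^{2}f(z)\xi,\xi\rangle \leq \Lambda\,\frac{|\xi|^{2}}{(1+|z|^{2})^{\frac{1}{2}}}\leq \Lambda\,|M|^{2}\frac{|w|^{2}|e|^{2}}{(1+|z|^{2})^{\frac{1}{2}}}.
\end{equation*}

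Next, from the explicit formula~\eqref{eq:nabla_2_f_j} for $\nabla^{2}f_{j}(z)-\nabla^{2}f(z)$, the stabilisation contribution reads
\begin{equation*}
\frac{n+1}{2A_{j}j^{2}}(1+|z|^{2})^{\frac{n-3}{2}}\bigl((1+|z|^{2})|\xi|^{2}+(n-1)\langle z,\xi\rangle^{2}\bigr).
\end{equation*}
By Cauchy--Schwarz, $\langle z,\xi\rangle^{2}\leq |z|^{2}|\xi|^{2}\leq (1+|z|^{2})|\xi|^{2}$, so the whole expression is controlled by $c(n)A_{j}^{-1}j^{-2}(1+|z|^{2})^{\frac{n-1}{2}}|\xi|^{2}$. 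Inserting $|\xi|^{2}\leq |M|^{2}|w|^{2}|e|^{2}$ leads to
\begin{equation*}
\langle (\nabla^{2}f_{j}-\nabla^{2}f)(z)\xi,\xi\rangle \leq \frac{c(n)}{A_{j}j^{2}}\,|M|^{2}\,(1+|z|^{2})^{\frac{n-1}{2}}|w|^{2}|e|^{2}.
\end{equation*}

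The remaining step, which is really the only non-routine one, is to absorb the mixed factor $(1+|z|^{2})^{(n-1)/2}|w|^{2}|e|^{2}$ into a sum of pure powers. This is achieved by Young's inequality with the conjugate exponents $\frac{n+1}{n-1}$ and $\frac{n+1}{2}$: one has
\begin{equation*}
(1+|z|^{2})^{\frac{n-1}{2}}\,|w|^{2}|e|^{2}\leq \frac{n-1}{n+1}(1+|z|^{2})^{\frac{n+1}{2}}+\frac{2}{n+1}\bigl(|w|\,|e|\bigr)^{n+1},
\end{equation*}
so that the stabilisation contribution is bounded by $c(n)A_{j}^{-1}j^{-2}|M|^{2}\bigl((1+|z|^{2})^{(n+1)/2}+|w|^{n+1}|e|^{n+1}\bigr)$. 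Adding this to the first estimate gives the claim. I expect no genuine obstacle here; the only point requiring care is picking the correct Young exponents so that the stabilisation power $(n+1)/2$ on $(1+|z|^{2})$ matches that appearing in the target bound.
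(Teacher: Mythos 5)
Your proof is correct and follows essentially the same route as the paper: both split $\nabla^2 f_j$ into the $\nabla^2 f$ part (controlled by~\eqref{eq:generalisedaelliptic}) and the stabilisation part (controlled via~\eqref{eq:nabla_2_f_j}), bound $|(Mw)\odot e|\leq |M||w||e|$, and then apply Young's inequality with exponents $\tfrac{n+1}{n-1}$ and $\tfrac{n+1}{2}$. The only cosmetic difference is that you spell out the intermediate estimate on the stabilisation contribution and the Cauchy--Schwarz step, which the paper folds into the phrase "direct computation."
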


\begin{proof} 
This follows from a direct computation, using the special form of the perturbed integrand~$f_j$ from~\eqref{eq:AjdefLinfty}. Indeed, via the formula for~$\nabla^2 f_j$ from~\eqref{eq:nabla_2_f_j} and the upper bound~\eqref{eq:generalisedaelliptic} we have 
	\begin{equation*}
		\langle \nabla^2 f_j(z)(((M w) \odot e), ((Mw)\odot e)\rangle 
		\leq \Lambda \abs{M}^2 \frac{\abs{w}^2 \abs{e}^2}{(1+\abs{z}^2)^{\frac{1}{2}}}  
		+ \frac{c(n)}{A_j j^2}(1+\abs{z}^2)^{\frac{n-1}{2}}\abs{M}^2 \abs{w}^2 \abs{e}^2 .
	\end{equation*}
	The claim of the lemma now follows from Young's inequality with exponents $\tfrac{n+1}{n-1}$ and $\tfrac{n+1}{2}$.
\end{proof}
We now come to the proof of Theorem~\ref{thm:uniformSecondOrder}: 

\begin{proof}[Proof of Theorem~\ref{thm:uniformSecondOrder}] 
	We structure the proof into several steps. 
	
	\emph{Step 1: Preliminary estimate.}
	We observe that the product formula yields
	\begin{equation*}
	   \varrho^4 \del_k \eps(v_j) = \varrho^4 \eps(\del_k v_j) = - \nabla \varrho^4 \odot \del_k v_j + \eps (\varrho^4 \del_k v_j)
	\end{equation*}
	for $k \in \{1,\ldots,n\}$. This consequently allows to rewrite the left-hand side of~\eqref{eq:dualauxest} as
	 \begin{align}
     \lefteqn{\sum_{k=1}^n \int_{\Omega} \big\langle \nabla^2f_j(\eps(v_j)) \varrho^2 \del_k\eps(v_j),  \varrho^2  \del_k\eps(v_j) \big\rangle\dx{x} } \nonumber \\
	 & = \sum_{k,i,m=1}^n \int_\Omega \del_k \sigma_j^{(im)} \varrho^4 \del_k \eps^{(im)}(v_j)\dx{x} \notag \\
	 & = - \frac{1}{2} \sum_{k,i,m=1}^n \int_\Omega \del_k\sigma_j^{(im)}   \left[ \del_i\varrho^4 \del_k v_j^{(m)} + \del_m\varrho^4 \del_k v_j^{(i)}\right]\dx{x} \notag \\
	 & \qquad + \sum_{k,i,m=1}^n \int_\Omega \del_k\sigma_j^{(im)} \eps^{(im)}(\varrho^4\del_k v_j)\dx{x} \notag \\
	 \label{eq:FirstEstimateUniformSecondOrder}
     &\eqqcolon A+B. 
    \end{align}
	Notice that all integrals are well-defined, by the explicit form of $\del_k \sigma_j$ and the (weighted) second order estimate~\eqref{eq:nonunifreg2} for~$v_j$ from Lemma~\ref{lem:higherregularityapproximate}. 
   
\emph{Step 2: Estimate for~$A$.}
We observe that $A$ can be decomposed into the three parts
	\begin{align*}
		A 
		& = - \frac{1}{2}\sum_{k,i,m=1}^n \int_\Omega \del_k\sigma_j^{(im)}\left[ \del_i\varrho^4 \del_k v_j^{(m)} + \del_i\varrho^4 \del_m v_j^{(k)}\right]\dx{x} \\
		& \qquad + \frac{1}{2} \sum_{k,i,m=1}^n \int_\Omega \del_k\sigma_j^{(im)}\left[\del_i\varrho^4\, \del_m v_j^{(k)} + \del_m\varrho^4 \del_i v_j^{(k)}\right]\dx{x} \\
		& \qquad - \frac{1}{2} \sum_{k,i,m=1}^n \int_\Omega \del_k \sigma_j^{(im)}\left[\del_m\varrho^4 \del_i v_j^{(k)} + \del_m \varrho^4 \del_k v_j^{(i)}\right]\dx{x} \\
		& \eqqcolon \mathrm{I} + \mathrm{II} + \mathrm{III}.
		\end{align*}
\emph{On $\mathrm{I}$ and $\mathrm{III}$.} Using the symmetry of $\sigma_j$, i.e.\ $\sigma_j(x)\in \rsym$ for all $x\in\Omega$, we see that $\mathrm{I}=\mathrm{III}$. For future purposes, we rewrite the term in square brackets in term~$\mathrm{I}$ as 
	\begin{equation*}
	  \frac{1}{2} \left[ \del_i\varrho^4 \del_k v_j^{(m)} + \del_i\varrho^4 \del_m v_j^{(k)} \right] = 4 \varrho^3 \big( \nabla \varrho \otimes (\eps(v_j) e_k) \big)^{(im)} \qquad \text{for all } k,i,m \in \{1,\ldots,n\}.
	\end{equation*}
Using once again the symmetry of~$\sigma_j$ together with the orthogonal sum decomposition~\eqref{eq:orthsum}, the definition of~$\sigma_j$ and the Cauchy--Schwarz inequality for the bilinear form associated with $\nabla^2f_j(\eps(v_j))$, we find
	\begin{align*}
		\abs{\mathrm{I} + \mathrm{III}}
		\leq 2\abs{\mathrm{I}} 
		& = 8\bigg|\sum_{k=1}^n\int_\Omega \langle \del_k\sigma_j,\varrho^3 \nabla \varrho \otimes (\eps(v_j) e_k) \rangle \dx{x}\bigg| \\
		& = 8\bigg|\sum_{k=1}^n\int_\Omega \langle \del_k\sigma_j,\varrho^3 \nabla \varrho \odot (\eps(v_j) e_k) \rangle \dx{x}\bigg| \\
		& = 8\bigg|\sum_{k=1}^n\int_\Omega \big\langle \nabla^2f_j(\eps(v_j)) \varrho^2 \del_k \eps(v_j), \varrho \nabla \varrho \odot (\eps(v_j) e_k) \big\rangle \dx{x}\bigg| \\
		& \leq \frac{1}{4} \sum_{k=1}^n \int_\Omega \big\langle \nabla^2f_j(\eps(v_j))\varrho^2 \del_k\eps(v_j), \varrho^2 \del_k \eps(v_j)\big\rangle \dx{x} \\
		& \qquad + 64 \sum_{k=1}^n \int_\Omega \big\langle \nabla^2f_j(\eps(v_j)) \varrho \nabla \varrho \odot (\eps(v_j) e_k) ,\varrho \nabla \varrho \odot (\eps(v_j) e_k) \big\rangle \dx{x}.
	\end{align*}
	We note that, even though $\sigma_{j}$ and hence also $\del_k \sigma_j$ take values in $\rsym$, the inner product in the first line is well-defined; see~\eqref{eq:orthsum}.
	The first term on the right-hand side can be absorbed into the left-hand side of~\eqref{eq:FirstEstimateUniformSecondOrder}. The second term is bounded due to the estimate $|\nabla \varrho| \leq \tfrac{2}{r}$ and Lemma~\ref{lem:TechnicalEstimate}, applied with $M = r^{-1} \varrho \1_{(n \times n) \times (n \times n)}$, $w = r \nabla \varrho$ and $e = \eps(v_j) e_k$ for $k \in \{1,\ldots,n\}$, via 
	\begin{align}\label{eq:helpfulissimo1}
	\begin{split}
		64 \sum_{k=1}^n \int_\Omega \big\langle \nabla^2f_j(\eps(v_j))& \varrho \nabla \varrho \odot (\eps(v_j) e_k) ,\varrho \nabla \varrho \odot (\eps(v_j) e_k) \big\rangle \dx{x}\\
		&\leq \frac{c \Lambda}{r^2}\int_{\ball_{2r}(x_{0})}\abs{\eps(v_j)}\dx{x} + \frac{c(n)}{A_j j^2 r^2}\int_{\ball_{2r}(x_{0})}(1+\abs{\eps(v_j)}^2)^{\frac{n+1}{2}}\dx{x}.
		\end{split}
	\end{align}
\emph{On $\mathrm{II}$.} Again employing the symmetry of $\sigma_j$, we have
	\begin{align*}
		\mathrm{II} = \sum_{k,i,m=1}^n \int_\Omega  \del_k\sigma_j^{(im)}\del_i\varrho^4 \del_m v_j^{(k)} \dx{x}.
	\end{align*}
	Since the derivative $\del_m v_j^{(k)}$ appearing in the integral is not estimated in terms of the symmetric gradient~$\eps(v_j)$, we integrate by parts twice (where all computations are justified due to the regularity estimate for~$v_j$ in Lemma~\ref{lem:higherregularityapproximate}). In this way, we obtain
	\begin{align*}
	\mathrm{II} 
	&= -\sum_{k,i,m=1}^n \int_\Omega \sigma_j^{(im)} \left[\del_{ik}\varrho^4 \del_m v_j^{(k)} + \del_i\varrho^4\del_{mk}v_j^{(k)}\right]\dx{x}\\
	& = \sum_{k,i,m=1}^n \int_\Omega \Big(\del_m \left[\sigma_j^{(im)}\del_{ik}\varrho^4\right] v_j^{(k)} + \del_m\left[\sigma_j^{(im)}\del_{i}\varrho^4\right]\del_k v_j^{(k)} \Big)\dx{x}\\
	&=\sum_{k,i,m=1}^n\int_\Omega \Big(\del_m\sigma_j^{(im)} \del_{ik}\varrho^4 v_j^{(k)} + \sigma_j^{(im)} \del_{ikm}\varrho^4 v_j^{(k)} \\
	& \hspace{6cm} + \del_m\sigma_j^{(im)}\del_i\varrho^4\del_kv_j^{(k)} + \sigma_j^{(im)} \del_{im}\varrho^4 \del_kv_j^{(k)}\Big)\dx{x} \\
	&\eqqcolon \mathrm{II}_1+\ldots+\mathrm{II}_4,
	\end{align*}
	where now the only derivatives of~$v_j$ appearing in $\mathrm{II}_1, \ldots, \mathrm{II}_4$ are of the form $\del_kv_j^{(k)}$, which after summation in~$k \in \{1,\ldots,n\}$ gives the divergence of~$v_j$ and is hence estimated by~$|\eps(v_j)|$.
	
	\begin{itemize}
	\item  
		\emph{On $\mathrm{II}_1$.} We first express the integrand of $\mathrm{II}_1$ (summed over $i,k,m \in \{1,\ldots ,n\}$) via the symmetry of~$\sigma_j$ as 
		\begin{align*}
		 \sum_{k,i,m=1}^n \del_m\sigma_j^{(im)} \del_{ik}\varrho^4 v_j^{(k)} 
		  & = \sum_{k,i,m,\ell=1}^n \del_\ell \sigma_j^{(im)} \del_{ik}\varrho^4 v_j^{(k)} \delta_{\ell m} \\
		  & = \sum_{\ell=1}^n  \langle  \del_\ell \sigma_j, (\nabla^2 \varrho^4 v_j) \otimes e_\ell \rangle \\
		  & = \sum_{\ell=1}^n  \langle  \del_\ell \sigma_j, (\nabla^2 \varrho^4 v_j) \odot e_\ell \rangle \\
		  & = \sum_{\ell=1}^n  \big\langle \nabla^2 f_j(\eps(v_j)) \partial_\ell \eps(v_j), (\nabla^2 \varrho^4 v_j) \odot e_\ell \big\rangle.
		\end{align*}
        By the product rule we next observe 
		\[
		\nabla^2 \varrho^4 
			= \nabla \left(4\varrho^3\nabla \varrho\right) 
			= 12 \varrho^2 \nabla \varrho \otimes \nabla \varrho + 4\varrho^3 \nabla^2 \varrho,
		\]
		and via the Cauchy--Schwarz inequality for the associated bilinear form of $\nabla^2f_j(\eps(v_j))$ we then find for $\mathrm{II}_1$ the estimate
		\begin{align}
		\abs{\mathrm{II}_1} 
		&= 12 \bigg| \sum_{\ell=1}^n  \int_\Omega \big\langle \nabla^2 f_j(\eps(v_j)) \varrho^2 \partial_\ell \eps(v_j), (\nabla \varrho \cdot v_j) \nabla \varrho \odot e_\ell \big\rangle \dx{x} \nonumber \\
		& \qquad + 4 \sum_{\ell=1}^n \int_\Omega \big\langle \nabla^2 f_j(\eps(v_j)) \varrho^2 \partial_\ell \eps(v_j), \varrho (\nabla^2 \varrho v_j) \odot e_\ell \big\rangle \dx{x} \bigg| \nonumber \\
		& \leq \frac{1}{4} \sum_{\ell=1}^n \int_\Omega \big\langle \nabla^2f_j(\eps(v_j))\varrho^2 \del_\ell\eps(v_j), \varrho^2 \del_\ell \eps(v_j) \big\rangle \dx{x} \label{eq:pollute} \\
		& \qquad + 160 \sum_{\ell=1}^n \int_\Omega  \big\langle \nabla^2 f_j(\eps(v_j)) (\nabla \varrho \cdot v_j) \nabla \varrho \odot e_\ell, (\nabla \varrho \cdot v_j) \nabla \varrho \odot e_\ell \big\rangle \dx{x} \nonumber \\
		& \qquad + 160 \sum_{\ell=1}^n \int_\Omega \big\langle \nabla^2 f_j(\eps(v_j)) \varrho (\nabla^2 \varrho v_j) \odot e_\ell , \varrho (\nabla^2 \varrho v_j) \odot e_\ell \big\rangle \dx{x}. \nonumber
		\end{align}
		For the second and the third integral on the right-hand side, we apply Lemma~\ref{lem:TechnicalEstimate}, for the choices $M = r \nabla \varrho \otimes \nabla \varrho$ and $M = r \varrho  \nabla^2\varrho$, $w= r^{-1} v_j$ and $e = e_\ell$ for $\ell \in \{1,\ldots,n\}$. With $|M| \leq \tfrac{4}{r}$ (and dropping the factor $(1+\abs{z}^2)^{-1/2}$ in the first term) we thus arrive at 
		\begin{multline*}
		\hspace{1cm} \abs{\mathrm{II}_1} 
		\leq \frac{1}{4} \sum_{\ell=1}^n \int_\Omega \big\langle \nabla^2f_j(\eps(v_j))\varrho^2 \del_\ell\eps(v_j), \varrho^2 \del_\ell \eps(v_j) \big\rangle \dx{x} \\
		+ \frac{c(n,\Lambda)}{r^2} \int_{\ball_{2r}(x_{0})} \frac{\abs{v_j}^2}{r^2}\dx{x} + \frac{c(n)}{ A_j j^2 r^2} \int_{\ball_{2r}(x_{0})} \bigg( (1+\abs{\eps(v_j)}^2)^{\frac{n+1}{2}} + \frac{\abs{v_j}^{n+1}}{r^{n+1}} \bigg) \dx{x}.
		\end{multline*}
		
		\item \emph{On $\mathrm{II}_3$.} We can initially proceed exactly as for~$\mathrm{II}_1$ and estimate by the Cauchy--Schwarz inequality 
		\begin{align*}
			\abs{\mathrm{II}_3} & = \bigg| \sum_{\ell=1}^n \int_\Omega \langle  \del_\ell \sigma_j, (\nabla \varrho^4 \Div(v_j)) \odot e_\ell \rangle \dx{x} \bigg| \\
            & = 4 \bigg| \sum_{\ell=1}^n  \int_\Omega \big\langle \nabla^2 f_j(\eps(v_j)) \varrho^2 \partial_\ell \eps(v_j), \varrho (\nabla \varrho \Div(v_j)) \odot e_\ell \big\rangle \dx{x} \bigg| \\
            & \leq \frac{1}{4} \sum_{\ell=1}^n \int_\Omega \big\langle \nabla^2f_j(\eps(v_j))\varrho^2 \del_\ell\eps(v_j), \varrho^2 \del_\ell \eps(v_j) \big\rangle \dx{x} \\ 
            & \qquad + 16 \sum_{\ell=1}^n \int_\Omega \big\langle \nabla^2 f_j(\eps(v_j)) \varrho (\nabla \varrho \Div(v_j)) \odot e_\ell, \varrho (\nabla \varrho \Div(v_j)) \odot e_\ell \big\rangle \dx{x} .
		\end{align*}
		We again use Lemma~\ref{lem:TechnicalEstimate}, with $M = r^{-1} \1_{n \times n}$, $w = r \varrho \nabla \varrho \Div(v_j)$ and $e = e_\ell$ for $\ell \in \{1,\ldots,n\}$. With $\abs{\Div(v_j)}\leq \abs{\eps(v_j)}$ this gives
		\begin{align*}
			\abs{\mathrm{II}_3}
            & \leq \frac{1}{4} \sum_{\ell=1}^n \int_\Omega \big\langle \nabla^2f_j(\eps(v_j))\varrho^2 \del_\ell\eps(v_j), \varrho^2 \del_\ell \eps(v_j) \big\rangle \dx{x} \\ 
            & \qquad + \frac{c(n,\Lambda)}{r^2} \int_{\ball_{2r}(x_{0})}\abs{\eps(v_j)}\dx{x} + \frac{c(n)}{ A_j j^2 r^2} \int_{\ball_{2r}(x_{0})} (1+\abs{\eps(v_j)}^2)^{\frac{n+1}{2}} \dx{x}.
		\end{align*}
		
		\item \emph{On $\mathrm{II}_{2}$.} With the formula~\eqref{eq:nabla_f_j} for $\nabla f_j$, the bound~$\Gamma$ for the Lipschitz constant for $\nabla f$ in view of Lemma~\ref{lem:boundbelow}\ref{item:boundbelow2}, $|\nabla^3 \varrho| \leq \tfrac{8}{r^3}$ and Young's inequality with exponents $\tfrac{n+1}{n}$ and~$n+1$ we obtain
		\begin{align*}
			\abs{\mathrm{II}_2} 
			&= \bigg| \sum_{k,i,m=1}^n \int_\Omega \nabla f_j(\eps(v_j))^{(im)}(\del_{ikm}\varrho^4) v_j^{(k)}\dx{x}\bigg|\\
			&\leq \frac{c(n,\Gamma)}{r^3}\int_{\ball_{2r}(x_{0})}\abs{v_j}\dx{x}
			+
			\frac{c(n)}{A_j j^2 r^3}\int_{\ball_{2r}(x_{0})}(1+|\eps(v_j)|^2)^{\frac{n}{2}} \abs{v_j}\dx{x} \\
			& \leq \frac{c(n,\Gamma)}{r^2}\int_{\ball_{2r}(x_{0})} \frac{\abs{v_j}}{r}\dx{x}
			+
			\frac{c(n)}{A_j j^2 r^2}\int_{\ball_{2r}(x_{0})}\bigg( (1+\abs{\eps(v_j)}^2)^{\frac{n+1}{2}} + \frac{\abs{v_j}^{n+1}}{r^{n+1}} \bigg) \dx{x} .
		\end{align*}
		
		\item \emph{On $\mathrm{II}_{4}$.} We proceed similarly as in the estimation of $\mathrm{II}_2$, now additionally using the pointwise estimate $\abs{\Div(v_j)}\leq \abs{\eps(v_j)}$, and Young's inequality with exponents $\frac{n+1}{n-1}$ and $\frac{n+1}{2}$. In this way, we find
		\begin{align*}
		\abs{\mathrm{II}_4} 
		&= \bigg| \sum_{k,i,m=1}^n \int_\Omega \nabla f_j(\eps(v_j))^{(im)}  \del_{im}\varrho^4 \del_k 
		v_j^{(k)}\dx{x}\bigg|\\
		&\leq \frac{c(n,\Gamma)}{r^2}\int_{\ball_{2r}(x_{0})}\abs{\Div(v_j)}\dx{x} + \frac{c(n)}{A_j j^2 r^2} \int_{\ball_{2r}(x_{0})}(1+\abs{\eps(v_j)}^2)^{\frac{n}{2}}\abs{\Div(v_j)}\dx{x}\\
		&\leq   \frac{c(n,\Gamma)}{r^2}\int_{\ball_{2r}(x_{0})}\abs{\eps(v_j)}\dx{x} 
		+ 
		\frac{c(n)}{A_j j^2 r^2}\int_{\ball_{2r}(x_{0})}(1+\abs{\eps(v_j)}^2)^{\frac{n+1}{2}}\dx{x}.
		\end{align*}
	\end{itemize}Combining all the estimates from Step 2, we arrive at
	\begin{align}\label{eq:centralAestimate}
	\begin{split}
	  A & \leq \frac{3}{4} \sum_{\ell=1}^n \int_\Omega \big\langle \nabla^2f_j(\eps(v_j))\varrho^2 \del_\ell\eps(v_j), \varrho^2 \del_\ell \eps(v_j) \big\rangle \dx{x} \\
	  & \qquad + \frac{c(n,\Gamma,\Lambda)}{r^2}\int_{\ball_{2r}(x_{0})}\bigg( \abs{\eps(v_j)} + \frac{\abs{v_j}}{r} + \frac{\abs{v_j}^2}{r^2}\bigg) \dx{x} \\
	  & \qquad + \frac{c(n)}{A_j j^2 r^2}\int_{\ball_{2r}(x_{0})}\bigg( (1+\abs{\eps(v_j)}^2)^{\frac{n+1}{2}} + \frac{\abs{v_j}^{n+1}}{r^{n+1}} \bigg)\dx{x}.
	  \end{split}
	\end{align}
\emph{Step 3: Estimating $B$.}	
	The estimate for the second term~$B$, which is given as
    \begin{equation*}
		B = \sum_{k=1}^n \int_\Omega \langle \del_k \sigma_j, \eps(\varrho^4\del_k v_j)\rangle\dx{x},
	\end{equation*}
	can be obtained by exploiting the differentiated Euler--Lagrange inequality~\eqref{eq:betterEL2} as stated in Lemma~\ref{lem:ELbetter}, with the test function $\varphi = \varrho^4\del_k v_j$. Here, we briefly pause to justify that this choice of $\varphi$ (even though it does not belong to $\W^{1,n+1}(\Omega;\R^{n})$) is admissible in~\eqref{eq:betterEL2} for every fixed $j \geq j_0$. Since we have $v_{j}\in (\W^{1,n+1}\cap\W_{\loc}^{2,2})(\Omega;\R^{n})$ by construction and Lemma~\ref{lem:higherregularityapproximate}, we can choose a sequence $(h_{i})_{i\in\mathds{N}}$ in $ \R_{>0}$ with $h_{i}\searrow 0$ such that for $k \in \{1,\ldots,n\}$ we have
	\begin{alignat}{2}
	\label{eq:makeitworkconvergences1}
	\varrho^{4}\Delta_{k,h_{i}}v_{j} & \to \varrho^{4}\partial_{k}v_{j}\qquad&&\text{strongly in }\L^{n+1}(\Omega;\R^{n}),\\ 
	\label{eq:makeitworkconvergences2}
	\eps(\varrho^{4}\Delta_{k,h_{i}}v_{j}) & \to \eps(\varrho^{4}\partial_{k}v_{j})\qquad &&\text{$\mathscr{L}^{n}$-a.e. in $\Omega$},
	\end{alignat}
	as $i\to\infty$. We consider the (weighted) Hilbert space $\mathcal{H}\coloneqq\L_{\mu_{j}}^{2}(\ball_{2r}(x_{0});\rsym)$, where 
	\begin{align*}
		\mu_{j} \coloneqq (1+|\eps(v_{j})|^{2})^{\frac{n-1}{2}}\mathscr{L}^{n}\Lcorner\ball_{2r}(x_{0}). 
	\end{align*}
	Since $(\eps(\varrho^{4}\Delta_{k,h_{i}}v_{j}))_{i \in \N}$ is uniformly bounded in $\mathcal{H}$ (cf.~\eqref{eq:diff-quot-bounded-H}), a (non-relabeled) subsequence converges weakly in $\mathcal{H}$, and its limit can be identified by the pointwise convergence $\eqref{eq:makeitworkconvergences2}$ in combination with Lemma~\ref{lem:identification} as $\eps(\varrho^{4}\partial_{k}v_{j})$ (which, in view of~\eqref{eq:nonunifreg2}, also belongs to~$\mathcal{H}$). By the estimate $|\nabla^2f_{j}(z)|\leq C(n,\Lambda) (1+|z|^{2})^{\frac{n-1}{2}}$ for all $z\in\rsym$ from~\eqref{eq:nabla_2_f_j} combined with~\eqref{eq:nonunifreg2} and by the Cauchy--Schwarz inequality, we notice that the linear functional
	\begin{equation*}
	\Psi_k\colon \mathcal{H}\ni \psi \mapsto \int_{\ball_{2r}(x_{0})} \langle \del_k \sigma_j, \psi \rangle\dx{x} = \int_{\ball_{2r}(x_{0})}\big\langle \nabla^2f_{j}(\eps(v_{j}))\partial_{k}\eps(v_{j}),\psi \big\rangle\dx{x}\in \R
	\end{equation*}
	is well-defined and belongs to the dual space $\mathcal{H}'$.
	Therefore, we have that $\Psi_k(\eps(\varrho^{4}\Delta_{k,h_{i}}v_{j}))\to\Psi_k(\eps(\varrho^{4}\partial_{k}v_{j}))$, i.e.,
	\begin{equation*}
	  \int_\Omega \langle \del_k \sigma_j, \eps(\varrho^{4}\Delta_{k,h_{i}}v_{j})\rangle\dx{x} \to \int_\Omega \langle \del_k \sigma_j, \eps(\varrho^4\del_k v_j)\rangle\dx{x} \quad \text{as } i\to\infty.
	\end{equation*}
	Concerning the second term in the Euler--Lagrange inequality~\eqref{eq:betterEL2} we notice that $|\nabla^2g(v_{j}/M)|$ is uniformly bounded in $\ball_{2r}(x_0)$ by the strict inequality~\eqref{v_j_estimate_bounded_strict}  together with $g\in \C^2(\ball_2(0))$, and with $\partial_k v_j \in \L^{n+1}(\Omega;\R^{n})\subset\L^{(n+1)/n}(\Omega;\R^{n})$ and the convergence~\eqref{eq:makeitworkconvergences1}, we then infer 
    \begin{equation*}
      \int_{\Omega}\Big\langle \partial_{k}\Big(\nabla g\Big(\frac{v_{j}}{M} \Big)\Big) ,\frac{\varrho^{4}\Delta_{k,h_{i}}v_{j}}{M}\Big\rangle\dx{x} \to \int_{\Omega}\Big\langle \partial_{k}\Big(\nabla g\Big(\frac{v_{j}}{M} \Big)\Big)  ,\frac{\varrho^{4}\partial_{k}v_{j}}{M}\Big\rangle\dx{x}  \quad \text{as } i\to\infty.
    \end{equation*}  
	Noticing that the limit on the right-hand side is non-negative by the strict convexity of~$g$ on $\B_{2}(0)$, we can now take advantage of the Euler--Lagrange inequality~\eqref{eq:betterEL2}, applied with the test function $\varphi = \varrho^{4}\Delta_{k,h_{i}}v_{j}\in\W_{c}^{1,n+1}(\Omega;\R^{n})$ for every $k \in \{1,\ldots,n\}$. With the limit of the two integrals as established above, this yields
	\begin{align*}
		B & \leq \sum_{k=1}^n \bigg( \int_\Omega \langle \del_k \sigma_j, \eps(\varrho^4\del_k v_j)\rangle\dx{x} + \int_{\Omega}\Big\langle \partial_{k}\Big(\nabla g\Big(\frac{v_{j}}{M} \Big)\Big)  ,\frac{\varrho^{4}\partial_{k}v_{j}}{M}\Big\rangle\dx{x} \bigg) \\
        & = \lim_{i \to \infty} \sum_{k=1}^n   \int_\Omega \langle \del_k \sigma_j, \eps(\varrho^{4}\Delta_{k,h_{i}}v_{j})\rangle\dx{x} + \int_{\Omega}\Big\langle \partial_{k}\Big(\nabla g\Big(\frac{v_{j}}{M} \Big)\Big) ,\frac{\varrho^{4}\Delta_{k,h_{i}}v_{j}}{M}\Big\rangle\dx{x} \\
        & \leq  \frac{1}{j} \lim_{i \to \infty} \sum_{k=1}^n \|\varrho^4 \Delta_{k,h_{i}}v_j\|_{\W^{-1,1}(\Omega; \R^n)}.
	\end{align*}
	With the convergence~\eqref{eq:makeitworkconvergences1} combined with the embedding $\lebe^{n+1}(\Omega;\R^n)\hookrightarrow \W^{-1,1}(\Omega;\R^n)$, the pro\-duct rule and the inequalities in~\eqref{eq:neg_Sob_1} and~\eqref{eq:neg_Sob_2}, we obtain for the left-hand side of the previous inequality
	\begin{equation*}
	   \lim_{i \to \infty}\|\varrho^4 \Delta_{k,h_{i}}v_j\|_{\W^{-1,1}(\Omega;\R^n)} \leq \|\varrho^4 \del_k v_j\|_{\W^{-1,1}(\Omega; \R^n)} \leq \Big( 1+ \frac{4}{r} \Big) \|v_j\|_{\L^1(\ball_{2r}(x_{0});\R^n)}
	\end{equation*}
    for $k \in \{1,\ldots,n\}$. Therefore, we end up with
    \begin{equation}\label{eq:centralBestimate}
      B \leq \frac{c(n)}{j} \Big( 1+ \frac{1}{r} \Big) \|v_j\|_{\L^1(\ball_{2r}(x_{0});\R^n)}.
    \end{equation}
	
	\emph{Step 4: Conclusion.} We now return to the estimate~\eqref{eq:FirstEstimateUniformSecondOrder} from Step~1. Taking advantage of the estimate~\eqref{eq:centralAestimate} for~$A$ from Step~2, where we can absorb the first term on the left-hand side of~\eqref{eq:FirstEstimateUniformSecondOrder}, and of the estimate~\eqref{eq:centralBestimate} for~$B$ from Step~3, we arrive at 
	\begin{align*}
	 \lefteqn{\sum_{k=1}^n \int_{\ball_{2r}(x_{0})} \varrho^4 \big\langle \nabla^2f_j(\eps(v_j))\del_k\eps(v_j), \del_k\eps(v_j) \big\rangle\dx{x}} \\
		& \leq \frac{c(n,\Gamma,\Lambda)}{r^2}\int_{\ball_{2r}(x_{0})}\bigg( \abs{\eps(v_j)} + \frac{\abs{v_j}}{r} + \frac{\abs{v_j}^2}{r^2} + \frac{r^2 \abs{v_j}}{j} +   \frac{r \abs{v_j}}{j}  \bigg) \dx{x} \\
	  & \qquad + \frac{c(n)}{A_j j^2 r^2}\int_{\ball_{2r}(x_{0})}\bigg( (1+\abs{\eps(v_j)}^2)^{\frac{n+1}{2}} + \frac{\abs{v_j}^{n+1}}{r^{n+1}} \bigg)\dx{x},
	\end{align*}
    where we have also employed the fact that the localization function~$\varrho$ is supported only inside of the ball $\ball_{2r}(x_0)$. At this stage, we employ on the right-hand side the uniform bound~\eqref{v_j_estimate_bounded} on the $\lebe^\infty$-norm of~$v_j$. Taking into account the trivial inequality $\tfrac{r}{j} \leq \tfrac{1}{r} + \tfrac{r^2}{j}$, we then arrive at the estimate stated in the theorem, with the claimed dependence of the constant~$c$.
\end{proof}

We conclude this subsection by commenting on a detail of the structure of the above proof:
\begin{remark} \label{rem:strucproof}
For the estimation of the terms $\mathrm{II}_{1}$ and $\mathrm{II}_{3}$ in the above proof, it might seem more natural and conceptually easier to employ the non-differentiated Euler--Lagrange inequality~\eqref{eq:EL2} from Lemma~\ref{lem:Euler_Lagrange_inequality} rather than its differentiated analogue~\eqref{eq:betterEL2} from Lemma~\ref{lem:ELbetter}. Specifically, one might be inclined to write  
\begin{align}\label{eq:problematic1}
\mathrm{II}_{1} = \sum_{k,i,m=1}^{n}\int_{\Omega}\partial_{m}\sigma_{j}^{(im)}\partial_{ik}\varrho^{4}v_{j}^{(k)}\dif x = \int_{\Omega}\langle \mathrm{div}(\sigma_{j}),\tau_{j}\rangle\dif x = -\int_{\Omega}\langle\sigma_{j},\sg(\tau_{j})\rangle\dif x, 
\end{align}
where $\tau_{j}$ is defined in the obvious way. Then, if we aim to use~\eqref{eq:EL2}, we are bound to re-introduce the corresponding $\lebe^{\infty}$-penalization term, leading us to 
\begin{align}\label{eq:problematic2}
|\mathrm{II}_{1}| \leq \left\vert \int_{\Omega}\langle\sigma_{j},\sg(\tau_{j})\rangle\dif x + \int_{\Omega}\Big\langle \nabla g\Big(\frac{v_{j}}{M}\Big),\frac{\tau_{j}}{M}\Big\rangle \dif x  \right\vert + \left\vert \int_{\Omega}\Big\langle\nabla g\Big(\frac{v_{j}}{M}\Big),\frac{\tau_{j}}{M}\Big\rangle\dif x \right\vert = \mathrm{II}_{1}^{a} + \mathrm{II}_{1}^{b}. 
\end{align}
Whereas $\mathrm{II}_{1}^{a}$ is conveniently controlled by~\eqref{eq:EL2}, the available a-priori bounds do not exclude a potential blow-up of $\mathrm{II}_{1}^{b}$ as $j\to\infty$. This is so because the strict bound in~\eqref{v_j_estimate_bounded_strict} does not rule out the possibility that $\limsup_{j\to\infty}\|v_{j}\|_{\lebe^\infty(K;\R^n)}=2M$ holds on an open subset $K\Subset\Omega$, and in this case uniform $\lebe^{1}$-bounds on $\sg(\tau_{j})$ are rendered useless. In turn, this is the main reason for the more involved algebraic transformations employed in the above proof. This principal advantage however is only usable through the passage to the weighted second order estimates: Proceeding in this way, the difficulties inherent in \eqref{eq:problematic1}--\eqref{eq:problematic2} do not vanish completely but rather transfer to the appearance of second order quantities and two pollution terms in~\eqref{eq:pollute}. It is thus a key point that our approach in the above proof of Theorem~\ref{thm:uniformSecondOrder} results in terms which are either absorbable or conveniently controllable, which would not be the case for \eqref{eq:problematic1}--\eqref{eq:problematic2}. 
\end{remark}
\subsection{$\sobo_{\locc}^{1,\lebe\log\lebe}$-regularity}
\label{section:proof_main_theorem}
Based on the weighted second order estimates gathered in the previous paragraph, we now proceed to the 

\begin{proof}[Proof of Theorem~\ref{thm:main}]
We split the proof into four steps. Throughout, let $\varrho\in\hold_{c}^{\infty}(\Omega;[0,1])$ be a localization function with $\mathbbm{1}_{\ball_{r}(x_{0})}\leq \varrho\leq \mathbbm{1}_{\ball_{2r}(x_{0})}$ and $\abs{\nabla^s\varrho}\leq\left(\frac{2}{r}\right)^s$ for $s\in\{1,2,3\}$, as in the statement of Theorem~\ref{thm:uniformSecondOrder}. Furthermore, the inequality $\log(1+t) \leq t$ for $t\geq 0$ implies that there exists a constant $c>0$ such that 
\begin{equation}
\label{eq:logkey}
 \log(1+t) \leq c t^{\frac{1}{4}} \quad \text{for all } t \geq 0.
\end{equation}

\emph{Step 1: Integration by parts in the Euler--Lagrange inequality and choice of a suitable test function.} We first notice from $\nabla f(\eps(v_{j})) \in \W^{1,2}_{\loc}(\Omega;\rsym)$, see Lemma~\ref{lem:higherregularityapproximate}, and $\sigma_{j}\coloneqq \nabla f_{j}(\varepsilon(v_{j}))\in{\sobo}{_{\locc}^{1,(n+1)/n}}(\Omega;\rsym)$, see  Lemma~\ref{lem:ELbetter},  that
\begin{equation}\label{eq:toomanytempi}
 (\nabla f_{j} - \nabla f)(\eps(v_{j})) \in \W^{1,\frac{n+1}{n}}_{\loc}(\Omega;\rsym). 
\end{equation}
This allows to use the integration by parts formula in the first integral of the Euler--Lagrange inequality~\eqref{eq:EL2}. Thus, also employing the symmetry of $\nabla f_{j}$, we infer that 
\begin{multline}
\label{eq:ELpartint}
	\Bigg\vert \int_{\Omega} \big\langle \nabla f(\eps(v_{j})),\eps(\varphi) \big\rangle \dx{x} - \sum_{i,m=1}^n \int_{\Omega} \partial_m ((\nabla f_{j} - \nabla f)(\eps(v_{j}))^{(im)}) \varphi^{(i)} \dx{x} \\ + \int_{\Omega} \Big\langle \nabla g\Big(\frac{v_{j}}{M}\Big),\frac{\varphi}{M}\Big\rangle\dx{x}\Bigg\vert
	\leq \frac{1}{j}\|\varphi\|_{\W^{-2,1}(\Omega;\R^{n})}
\end{multline}
holds for all $\varphi\in\sobo_{c}^{1,n+1}(\Omega;\R^{n})$. Because of $\nabla f(\sg(v_{j}))\in\lebe^{\infty}(\Omega;\rsym)$, \eqref{eq:toomanytempi} and 
\begin{align*}
\left\Vert \nabla g\Big(\frac{v_{j}}{M}\Big)\right\Vert_{\lebe^{\infty}(\Omega;\R^{n})}<\infty, 
\end{align*}
see~\eqref{v_j_estimate_bounded_strict}, we find by approximation of general maps   $\varphi\in(\sobo^{1,1}_c \cap \lebe^{n+1})(\Omega;\R^{n})$ by $\hold_{c}^{\infty}(\Omega;\R^{n})$-maps in the norm topology of $(\sobo^{1,1}\cap\lebe^{n+1})(\Omega;\R^{n})$ that the inequality~\eqref{eq:ELpartint} holds true for all competitors $\varphi\in(\sobo^{1,1}_c \cap \lebe^{n+1})(\Omega;\R^{n})$. We note that the use of the integration by parts formula has allowed to weaken the regularity requirement for the test functions compared with the Euler--Lagrange inequality~\eqref{eq:EL2}, where $\varphi \in \sobo^{1,n+1}_c(\Omega;\R^{n})$ is required. This circumvents an additional approximation argument to justify our choice of a test function
\begin{equation}
\label{eq:log_estimate_test_function}
\varphi\coloneqq \varrho^{4} \log^{2}(1+\abs{\eps(v_j)}^2)v_{j},
\end{equation}
which obviously belongs to the space $(\sobo^{1,1}_c \cap \lebe^{n+1})(\Omega;\R^{n})$ and where, for convenience of notation, we have suppressed the explicit notation of the dependence of~$\varphi$ on~$j$. We further observe that 
\begin{align}\label{eq:symgradsplit}
\begin{split}
\sg(\varphi)  = \varrho^{4}\log^{2}(1+\abs{\eps(v_j)}^2)\sg(v_{j}) & + 4\varrho^3 \nabla \varrho \odot \log^{2}(1+\abs{\eps(v_j)}^2)v_{j}  \\ & + \varrho^{4} \nabla ( \log^{2}(1+\abs{\eps(v_j)}^2)) \odot v_{j} . 
\end{split}
\end{align}

\emph{Step 2: Uniform local higher integrability estimates.} 
With the test function~$\varphi$ from~\eqref{eq:log_estimate_test_function} in the previous step and by integration by parts, the  Euler--Lagrange inequality~\eqref{eq:ELpartint}  implies 
	\begin{align*}
		\mathrm{I}  
		&\coloneqq 
			\int_\Omega \varrho^4 \log^2(1+\abs{\eps(v_j)}^2) \big\langle \nabla f(\eps(v_j)), \eps(v_j) \big\rangle \dx{x} 
			\\
			&\leq 
			- \int_\Omega \big\langle \nabla f(\eps(v_j)),4\varrho^3 \nabla \varrho\odot \log^2(1+\abs{\eps(v_j)}^2)v_j \big\rangle \dx{x}\\
			& \qquad -
			\int_\Omega \big\langle \nabla f(\eps(v_j)), \varrho^4 \nabla (\log^2(1+\abs{\eps(v_j)}^2)) \odot v_j \big\rangle \dx{x} \\
			& \qquad + \sum_{i,m=1}^n \int_{\Omega} \partial_m ((\nabla f_{j} - \nabla f)(\eps(v_{j}))^{(im)}) \varrho^{4} \log^{2}(1+\abs{\eps(v_j)}^2)v_{j}^{(i)} \dx{x} \\
			& \qquad -
			\int_\Omega \varrho^4 \log^2(1+\abs{\eps(v_j)}^2)\Big\langle \nabla g\left(\frac{v_j}{M}\right),  \frac{v_j}{M} \Big\rangle \dx{x}\\ 
			& \qquad + 
			\frac{1}{j}\|\varrho^4 \log^2(1+\abs{\eps(v_j)}^2)v_j\|_{\W^{-2,1}(\Omega;\R^n)}\\
			&\eqqcolon \mathrm{II} + \mathrm{III} + \mathrm{IV} + \mathrm{V} + \mathrm{VI}. 
	\end{align*}
With Lemma~\ref{lem:boundbelow}\ref{item:boundbelow1} and~\eqref{eq:logkey}, we first observe 
    \begin{equation}
    \label{eq:finalIestimate}
	\mathrm{I} \geq \gamma \int_{\Omega}\varrho^{4}\abs{\eps(v_j)} \log^2(1+\abs{\eps(v_j)}^2)\dx{x} - c(\Gamma) \int_{\Omega} \varrho^4 |\sg(v_{j})| \dx{x}.
	\end{equation}
    In order to estimate next the terms $\mathrm{II}$ and $\mathrm{III}$, we recall from Lemma~\ref{lem:boundbelow}\ref{item:boundbelow2} that~$f$ is Lipschitz continuous with constant~$\Gamma$ and that $v_j$ satisfies the $\lebe^\infty$-bound from~\eqref{v_j_estimate_bounded}. 
	Using~\eqref{eq:logkey} we then obtain 
	\begin{equation*}
    \mathrm{II} \leq 4
		\int_{\Omega} \varrho^3 \abs{\nabla f(\eps(v_j))}\abs{\nabla \varrho} \log^2(1+\abs{\eps(v_j)}^2)\abs{v_j}\dx{x}\\
        \leq \frac{c(\Gamma,M)}{r} 
		\int_{\B_{2r}(x_{0})} \abs{\eps(v_j)}\dx{x}.
	\end{equation*}
	We next observe the estimate
	\begin{align*}
		\big| \nabla (\log^2(1+\abs{\eps(v_j)}^2)) \big| 
		\leq 
		4 (1+\abs{\eps(v_j)}^2)^{-1} \abs{\eps(v_j)} \abs{\nabla\eps(v_j)}\log(1+\abs{\eps(v_j)}^2).
	\end{align*}
	Via Young's inequality, we then find for term $\mathrm{III}$ 
	\begin{align*}
		\mathrm{III} & \leq 
		c(\Gamma,M) \int_{\Omega} \varrho^4 (1+\abs{\eps(v_j)}^2)^{-1} \abs{\eps(v_j)} \abs{\nabla\eps(v_j)} \log(1+\abs{\eps(v_j)}^2)\dx{x}\\
		& \leq \frac{\gamma}{2} \int_{\Omega}\varrho^{4}\abs{\eps(v_j)} \log^2(1+\abs{\eps(v_j)}^2)\dx{x} \\
		& \qquad + c(\gamma,\Gamma,M)  \int_{\Omega}\varrho^{4}(1+|\eps(v_{j})|^{2})^{-\frac{3}{2}} |\nabla \eps(v_j)|^2 \dx{x},
	\end{align*}
	and for term $\mathrm{IV}$ 
    \begin{align*}
		\mathrm{IV} & \leq 
		c(n,M) \frac{1}{2A_j j^2}\int_{\Omega} \varrho^4 (1+\abs{\eps(v_j)}^2)^{\frac{n-1}{2}}\abs{\nabla\eps(v_j)} \log^2(1+\abs{\eps(v_j)}^2)\dx{x} \\
		& \leq \frac{c(n,M)}{A_j j^2} \int_\Omega \varrho^4 (1+\abs{\eps(v_j)}^2)^{\frac{n-1}{2}}\log^4(1+\abs{\eps(v_j)}^2)\dx{x} \\
		&\qquad + \frac{c(n, M)}{A_j j^2} \int_\Omega \varrho^4 (1+\abs{\eps(v_j)}^2)^{\frac{n-1}{2}}\abs{\nabla\eps(v_j)}^2\dx{x}  .
	\end{align*}	
Under the ellipticity condition~\eqref{eq:MuEllipticity} with $\mu \leq 3$, we may now take advantage of the second order estimates from Theorem~\ref{thm:uniformSecondOrder} as stated in Remark~\ref{remark:uniform_second_order_estimates} in order to deal with the second terms on the right-hand side of the estimate for $\mathrm{III}$ and $\mathrm{IV}$. Using~\eqref{eq:logkey}, we find in this way
\begin{align*}
 \mathrm{III} + \mathrm{IV} & \leq \frac{\gamma}{2} \int_{\Omega}\varrho^{4}\abs{\eps(v_j)} \log^2(1+\abs{\eps(v_j)}^2)\dx{x} \\
		& \qquad + \frac{c(n,\gamma,\Gamma,\lambda,\Lambda,M)}{r^{2}}  \bigg[ \int_{\ball_{2r}(x_{0})} \abs{\eps(v_j)} \dx{x} + \Big( \frac{1}{r} +\frac{1}{r^2} + \frac{r^2}{j} \Big) \mathscr{L}^{n}(\ball_{2r}(x_0)) \\ 
  & \hspace{5cm} + \frac{1}{A_j j^2} \Big( 1 + r^2 \Big) \int_{\ball_{2r}(x_{0})}(1+ \abs{\eps(v_j)}^2)^{\frac{n+1}{2}} \dx{x} + \frac{1}{j^2} \frac{1}{r} \bigg].
\end{align*}
We next discuss term $\mathrm{V}$. Since $g|_{\B_{2}(0)}$ is finite, of class $\C^{2}$ and convex, we have the monotonicity inequality 
\begin{equation*}
0\leq \langle \nabla g(y_{1})-\nabla g(y_{2}),y_{1}-y_{2}\rangle<\infty \quad \text{for all } y_{1},y_{2}\in\B_2(0). 
\end{equation*}
We recall that $\nabla g(0)=0$ and, by~\eqref{v_j_estimate_bounded_strict}, $\|v_{j}\|_{\lebe^\infty(\Omega;\R^n)}<2M$. Hence, applying the monotonicity inequality pointwisely with $y_{1}=v_{j}(x)$ and $y_{2}=0$ yields $\langle \nabla g(\tfrac{v_{j}}{M}),\tfrac{v_{j}}{M}\rangle \geq 0$ $\mathscr{L}^{n}$-a.e. in $\Omega$. Therefore, we have
	\begin{equation*}
	\mathrm{V} 
	= - \int_{\Omega}\varrho^{4}\log^2(1+|\eps(v_{j})|^{2})\Big\langle \nabla g\Big(\frac{v_{j}}{M}\Big),\frac{v_{j}}{M}\Big\rangle\dx{x}\leq 0. 
	\end{equation*}
Finally, we can estimate term $\mathrm{VI}$ via~\eqref{eq:neg_Sob_0},~\eqref{eq:neg_Sob_1} and~\eqref{eq:logkey}
	\begin{equation*}
	\mathrm{VI} = \frac{1}{j}\|\varrho^4 \log^2(1+\abs{\eps(v_j)}^2)v_j\|_{\W^{-2,1}(\Omega;\R^n)} \leq \frac{c(M)}{j}\|\eps(v_j)\|_{\L^1(\ball_{2r}(x_{0}),\R^{n \times n})}.
	\end{equation*}
We are now ready to synthesise all estimates for the terms $\mathrm{I}$--$\mathrm{VI}$. Absorbing the first term on the right-hand side in the estimate for $\mathrm{III} + \mathrm{IV}$ in the lower bound~\eqref{eq:finalIestimate} for $\mathrm{I}$ and then using also~\eqref{v_j_estimate_n}, we obtain  
\begin{align}\label{eq:ultimate}
\lefteqn{\int_{\ball_{r}(x_{0})}|\sg(v_{j}) |\log^{2}(1+|\sg(v_{j})|^{2})\dif x} \nonumber \\
& \leq \int_{\Omega}\varrho^{4}|\sg(v_{j})  \log^2(1+\abs{\eps(v_j)}^2)\dx{x} \nonumber \\
& \leq c\Big(1+\frac{1}{r}+\frac{1}{r^{2}} + \frac{1}{j}\Big) \int_{\ball_{2r}(x_{0})}|\sg(v_{j})|\dif x  + \Big( \frac{1}{r^3} +\frac{1}{r^4} + \frac{1}{j} \Big) \mathscr{L}^{n}(\ball_{2r}(x_0)) \nonumber \\
& \quad +  \frac{c}{A_j j^2} \Big( 1 + \frac{1}{r} + \frac{1}{r^2} \Big) \int_{\ball_{2r}(x_{0})}(1+ \abs{\eps(v_j)}^2)^{\frac{n+1}{2}} \dx{x}  + \frac{1}{j^2} \frac{1}{r} \nonumber \\
& \leq c\Big(1+\frac{1}{r^{2}} + \frac{1}{j}\Big) \int_{\ball_{2r}(x_{0})}|\sg(v_{j})|\dif x   + \Big( \frac{1}{r^3} +\frac{1}{r^4} + \frac{1}{j} \Big) \mathscr{L}^{n}(\ball_{2r}(x_0)) +  \frac{c}{j^2} \Big( 1 + \frac{1}{r^2} \Big)
\end{align}
with a constant $c=c(n,\gamma,\Gamma,\lambda,\Lambda,M)>0$. 

	
	\emph{Step 3: $\LD_{\locc}$-regularity.} We now aim to employ Reshetnyak's (lower semi-) continuity theorem to conclude that $\E^{s}u\equiv 0$ in $\ball_{r}(x_{0})$. To this end, we first note that \eqref{eq:SASTconv}, \eqref{eq:LDdistanceMollification} and  $\eqref{eq:almostoptimal4Linfty}_{1}$ yield $v_{j}\to u$ in $\sobo^{-2,1}(\Omega;\R^{n})$. Furthermore, \eqref{v_j_estimate_1} and the compact embedding $\ld(\Omega)\hookrightarrow \lebe^{1}(\Omega;\R^{n})$ yield the existence of a (non-relabelled) subsequence of $(v_{j})_{j\in\N}$ which converges in the (symmetric) weak*-sense on $\bd(\Omega)$, to the same limit~$u$.  In particular, going back to~\eqref{eq:ultimate}, we find 
\begin{align}\label{eq:ultimate1}
\liminf_{j\to\infty}\int_{\ball_{r}(x_{0})}|\sg(v_{j}) |\log^{2}(1+|\sg(v_{j})|^{2})\dif x \leq c\Big(1+\frac{1}{r^{2}}\Big)\Big(r^{n-2}+|\E u|(\overline{\ball}_{2r}(x_{0}))\Big)
\end{align}
with a constant $c=c(n,\gamma,\Gamma,\lambda,\Lambda,M)$. With the notation $A_{\alpha}(t)\coloneqq t \log^{\alpha}(1+t^{2})$ introduced in Section~\ref{sec:Orlicz}, we next define a function $\Phi \colon \rsym \to [0,\infty)$ via $\Phi(z)\coloneqq A_{2}(|z|)$ for $z\in\rsym$. The corresponding recession function is given by 
	\begin{align}\label{eq:recPhi} 
	\Phi^\infty(z) 
	\coloneqq \lim_{t\to\infty}\frac{\Phi(tz)}{t} 
	= \lim_{t\to\infty} \abs{z}\log^2(1+t^{2}\abs{z}^{2}) 
	= \begin{cases}
	0\quad&\mbox{if}\quad \abs{z}=0\\
	+\infty \quad &\mbox{if}\quad \abs{z}>0.
	\end{cases}
	\end{align}
    With $v_{j}\stackrel{*}{\rightharpoonup}u$ in $\bd(\ball_{r}(x_{0}))$ the lower semicontinuity part of Theorem~\ref{lem:reshetnyak} (with the notation introduced in~\eqref{eq:funofmeas}) in combination with~\eqref{eq:ultimate1} implies
    \begin{align}\label{eq:LSCultimate1}
    \begin{split}
    \int_{\ball_{r}(x_{0})}\Phi&(\mathscr{E}u)\dif x + \int_{\ball_{r}(x_{0})}\Phi^{\infty}\Big(\frac{\dif\E u}{\dif|\E^{s}u|}\Big)\dif|\E^{s}u|  = \Phi(\E u)(\ball_{r}(x_{0})) \\ & \leq \liminf_{j\to\infty}\Phi(\E v_{j})(\ball_{r}(x_{0})) \leq c\Big(1+\frac{1}{r^{2}}\Big)\Big(r^{n-2}+|\E u|(\overline{\ball}_{2r}(x_{0}))\Big) 
    \end{split}
    \end{align}
with a constant $c=c(n,\gamma,\Gamma,\lambda,\Lambda,M)$. In view of~\eqref{eq:recPhi}, this implies $|\E^{s}u|\equiv 0$ in $\ball_{r}(x_{0})$. By arbitrariness of the ball $\ball_r(x_0)$ with $\ball_{2r}(x_0) \Subset \Omega$, this in turn yields that $\E^{s}u\equiv 0$ in $\Omega$. Hence, in particular, we arrive at $u\in\ld_{\locc}(\Omega)$ as well as $\mathscr{E}u=\sg(u)$, and estimate~\eqref{eq:LSCultimate1} then entails that $u\in \E^{1,A_{2}}(\ball_{r}(x_{0}))$ with the corresponding estimates. 

\emph{Step 4. $\sobo_{\locc}^{1,\lebe\log\lebe}$-regularity.} To conclude the proof, we now invoke the Korn-type inequality~\eqref{eq:kornllogl} from Lemma~\ref{lem:Korn} to find via~\eqref{eq:LSCultimate1} 
\begin{align*}
\int_{\ball_{r}(x_{0})}|\nabla u|\log(1+|\nabla u|^{2})\dif x & = \int_{\ball_{r}(x_{0})}A_{1}(|\nabla u|)\,\dif x \\
 & \leq c\bigg( r^n + \int_{\ball_{r}(x_{0})}A_{1}\Big(\frac{|u|}{r}\Big)\dif x + \int_{\ball_{r}(x_{0})}A_{2}(|\sg(u)|)\,\dif x \bigg) \\ 
& \leq c\Big(1+\frac{1}{r^{2}}\Big)\Big(r^{n} + r^{n-2} +|\E u|(\overline{\ball}_{2r}(x_{0}))\Big)
\end{align*}
for each ball $\ball_r(x_0)$ with $\ball_{2r}(x_0) \Subset \Omega$, with a constant $c=c(n,\gamma,\Gamma,\lambda,\Lambda,M)$. Since~$M$ depends only on $n$,~$\Omega$ and~$m$, this is the desired estimate~\eqref{eq:mainestimate} and the proof of the theorem is complete. 
\end{proof}
We conclude this subsection with two remarks on the above proof, both concerning potential improvements and its application to full gradient scenarios. 

\begin{remark}[$\sobo^{1,\lebe\log\lebe}$-regularity for $\mu = 3$]\label{rem:LlogL}
The above proof yields that every locally bounded $\bd$-minimizer~$u$ of~$F$ belongs to $\sobo{_{\locc}^{1,\lebe\log\lebe}}(\Omega;\R^{n})$ with $\sg(u)\in\lebe\log^{2}\lebe_{\locc}(\Omega;\rsym)$ provided that $\mu\leq 3$. It is not clear to us whether the above strategy can be improved for $\mu =3$ to obtain 
\begin{equation*}
 \sg(u)\in\lebe\log^{q}\lebe_{\locc}(\Omega;\rsym) \quad \text{and in turn} \quad u \in \sobo{_{\locc}^{1,\lebe\log^{q-1}\lebe}}(\Omega;\R^{n})
\end{equation*}
by Lemma~\ref{lem:Korn} (which is sharp) for some $q >2$. To arrive at this conclusion, one might be inclined to employ the test function $\varphi=\varrho^{4}\log^{q}(1+|\sg(v_{j})|^{2})v_{j}$ in the Euler--Lagrange inequality~\eqref{eq:ELpartint}. When estimating term~$\mathrm{III}$ by means of Young's inequality as above, we then have to control    
\begin{equation*}
\frac{\gamma}{2} \int_{\Omega}\varrho^{4}(1+|\sg(v_{j})|^{2})^{\frac{1}{2}}\log^{2(q-1)}(1+\abs{\eps(v_j)}^2)\dx{x}.
\end{equation*}
By the uniform $\lebe\log^{2}\lebe_{\locc}$-integrability of $\sg(v_{j})$ or by absorption, this is possible only for $q \leq 2$. In the full gradient case and subject to additional structure conditions on the integrands $f$, this can be overcome by use of stronger weights in Theorem~\ref{thm:uniformSecondOrder}  (see e.g. \cite[Lem. 4.2]{BeckSchmidt1}, \cite[Lem. 3.2]{BildhauerMu}). Here, however, the appearance of the symmetric gradients seems to destroy any benefits of such additional assumptions on $f$, whereby the local $\L\log^{2}\L$-integrability of $\eps(u)$ might be optimal; see Section~\ref{sec:improvedmu} below for improvements for $\mu$-elliptic integrands with $1<\mu<3$. 
\end{remark}
\begin{remark}[Admissibility of competitors]\label{rem:simple}
The integration by parts in Step 1 of the preceding proof is motivated by the fact that the Euler--Lagrange inequality from Lemma~\ref{lem:Euler_Lagrange_inequality} requires competitors $\varphi\in\sobo_{0}^{1,n+1}(\Omega;\R^{n})$. Aiming to test with $\varphi$ given by~\eqref{eq:log_estimate_test_function}, we however cannot argue by analogous means as in~\eqref{eq:makeitworkconvergences1} ff.. More precisely, we put $\mathcal{H}=\lebe_{\mu_{j}}^{2}(\ball_{2r}(x_{0});\rsym)$,  where $\mu_{j}=(1+|\sg(v_{j})|^{2})^{\frac{n-1}{2}}\mathscr{L}^{n}$ is as in Step~3 of the proof of Theorem~\ref{thm:uniformSecondOrder}. Considering approximations
	\begin{align}\label{eq:approxdiscrete}
	\varphi_h \coloneqq \varrho^{4}\log^{\gamma}({1+}\beta_{j}^{h})v_{j} \quad \text{with }\;\;\;{\beta_{j}^{h}} \coloneqq  \frac{1}{4}\sum_{i,m=1}^n \big|\Delta_{i,h}v_j^{(m)}+\Delta_{m,h}v_j^{(i)} \big|^2 \;\;\;\text{and}\;\;\;\gamma=2,
\end{align}
the desired Euler--Lagrange inequality satisfied by $\varphi$ then would follow from Lemma~\ref{lem:identification} and 
\begin{align*}
\Psi\in\mathcal{H}',\;\;\;\text{where}\;\;\;	\Psi\colon\mathcal{H}\ni\psi\mapsto\int_{\ball_{2r}(x_{0})}\langle \nabla f_{j}(\sg(v_{j})),\psi\rangle\,\dif x,
\end{align*}
provided that $(\varphi_{h})_{h>0}$ is bounded in $\mathcal{H}$. Expanding the symmetric gradients $\varepsilon(\varphi_{h})$ as 
\begin{align*}
\varrho^{4}\log^{2}(1+\beta_{j}^{h})\sg(v_{j}) + 2\varrho^{4}v_{j}\odot \frac{\log(1+\beta_{j}^{h})}{1+\beta_{j}^{h}}\nabla\beta_{j}^{h} + (\nabla\varrho^{4})\odot \log^{2}(1+\beta_{j}^{h})v_{j} \eqqcolon \mathrm{J}_{1}^{h}+\mathrm{J}_{2}^{h} + \mathrm{J}_{3}^{h}, 
\end{align*}
Lemma~\ref{lem:higherregularityapproximate} is too weak to conclude that $(\mathrm{J}_{2}^{h})_{h>0}$, and so $(\varepsilon(\varphi_{h}))_{h>0}$, is bounded in $\mathcal{H}$. This could be resolved by setting $\gamma=1$ in~\eqref{eq:approxdiscrete}, but then comes at the cost of the substantially weaker regularity conclusion $u\in\bv_{\locc}(\Omega;\R^{n})$ by the above proof of Theorem~\ref{thm:main}.  The integration by parts circumvents this issue, and also provides a simplification in the full gradient case, see \cite[Lem. 5.3]{BeckSchmidt},  where the $\lebe\log^{2}\lebe_{\locc}$-gradient integrability of local $\bv$-minimizers only follows by use of a two-step argument based on the analogue of~\eqref{eq:approxdiscrete} with $\gamma=1$. 
\end{remark}
\subsection{Improved $\sobo^{1,4-\mu}$-regularity for $1<\mu < 3$.}\label{sec:improvedmu}
As explained in the above  Remark~\ref{rem:LlogL} the Sobolev regularity stated in Theorem~\ref{thm:main} seems to be optimal in the borderline case $\mu=3$, but can still be improved under stronger ellipticity conditions with $\mu \in (1,3)$. In fact, a modification of the proof of Theorem~\ref{thm:main} yields the following result:
\begin{theorem}[$\sobo_{\locc}^{1,4-\mu}$-regularity, $1<\mu<3$]\label{thm:improreg}
Let $\Omega\subset\R^{n}$ be open and bounded, and let $f\in\hold^{2}(\rsym)$ be a variational integrand which satisfies~\eqref{eq:lingrowth1} and~\eqref{eq:MuEllipticity} with $1<\mu<3$. Then any local $\bd$-minimizer $u\in\bd_{\loc}(\Omega)\cap\lebe_{\loc}^{\infty}(\Omega;\R^{n})$ of $F$ is of class $\sobo_{\locc}^{1,4-\mu}(\Omega;\R^{n})$. 
\end{theorem}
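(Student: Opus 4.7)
The plan is to adapt the proof of Theorem~\ref{thm:main} by replacing the logarithmic test function $\varphi = \varrho^{4}\log^{2}(1+|\eps(v_j)|^{2})v_j$ in~\eqref{eq:log_estimate_test_function} with the polynomial choice
\[
\varphi \coloneqq \varrho^{4}(1+|\eps(v_j)|^{2})^{(3-\mu)/2}v_j.
\]
Since $\mu<3$, the exponent $(3-\mu)/2$ is strictly positive and strictly less than~$1$; together with $v_j \in (\sobo^{1,n+1}\cap\lebe^{\infty})(\Omega;\R^{n})$ this ensures $\varphi \in (\sobo_c^{1,1}\cap\lebe^{n+1})(\Omega;\R^{n})$, so that $\varphi$ is admissible in the integrated-by-parts Euler--Lagrange inequality~\eqref{eq:ELpartint} from Step~1 of that proof.

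Inserting $\varphi$ into~\eqref{eq:ELpartint}, the structure $\mathrm{I}\leq\mathrm{II}+\cdots+\mathrm{VI}$ of Step~2 is preserved. The principal term $\mathrm{I}$ is bounded below, via Lemma~\ref{lem:boundbelow}\ref{item:boundbelow1} and a pointwise case distinction depending on whether $|\eps(v_j)|$ lies above or below the threshold $2\Gamma/\gamma$, by a positive constant (depending only on $\mu,\gamma$) times $\int\varrho^{4}(1+|\eps(v_j)|^{2})^{(4-\mu)/2}\dx{x}$, up to an additive constant depending on $\gamma,\Gamma,\mu$ and $\mathscr{L}^n(\Omega)$. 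The critical new term is the analogue of~$\mathrm{III}$, which is bounded by a multiple of $\int\varrho^{4}(1+|\eps(v_j)|^{2})^{(1-\mu)/2}|\eps(v_j)||\nabla\eps(v_j)|\dx{x}$. A Young inequality split of the product $(1+|\eps(v_j)|^{2})^{-\mu/4}|\nabla\eps(v_j)|\cdot(1+|\eps(v_j)|^{2})^{(2-\mu)/4}|\eps(v_j)|$ produces the weighted quantity $(1+|\eps(v_j)|^{2})^{-\mu/2}|\nabla\eps(v_j)|^{2}$ plus a remainder of the form $(1+|\eps(v_j)|^{2})^{(4-\mu)/2}$. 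This remainder absorbs into the lower bound of~$\mathrm{I}$ provided the Young parameter is taken large enough; this is legitimate because the weighted second-order integral is uniformly bounded in $j$ by Theorem~\ref{thm:uniformSecondOrder} (cf.\ Remark~\ref{remark:uniform_second_order_estimates}), so any finite multiplicative constant on it remains harmless. Terms $\mathrm{II}$, $\mathrm{IV}$--$\mathrm{VI}$ are handled exactly as in the proof of Theorem~\ref{thm:main}, yielding a uniform-in-$j$ local bound on $\int_{\ball_r(x_0)}(1+|\eps(v_j)|^{2})^{(4-\mu)/2}\dx{x}$.

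The limit $j\to\infty$ is taken via the lower semicontinuity part of Reshetnyak's theorem (Theorem~\ref{lem:reshetnyak}), applied to the convex integrand $\Phi(z)\coloneqq(1+|z|^{2})^{(4-\mu)/2}$, which has linear growth from below (since $4-\mu>1$) and whose recession function equals $+\infty$ for $z\neq 0$ (since $3-\mu>0$). Combined with $v_j\stackrel{*}{\rightharpoonup}u$ in $\bd(\Omega)$, finiteness of the limit forces $\E^{s}u\equiv 0$ on~$\Omega$ and yields $\eps(u)\in\lebe^{4-\mu}_{\locc}(\Omega;\rsym)$. The classical Korn inequality in $\lebe^{4-\mu}$ (valid because $4-\mu>1$) combined with the local $\lebe^{\infty}$-bound on~$u$ then gives $u\in\sobo^{1,4-\mu}_{\locc}(\Omega;\R^{n})$.

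The main technical obstacle is the absorption step: the polynomial remainder produced by Young's inequality shares the target exponent $(4-\mu)/2$ with the lower bound on~$\mathrm{I}$, so absorption only succeeds because the companion weighted $\nabla\eps(v_j)$-quantity is uniformly controlled in $j$ and therefore admits an arbitrarily large multiplicative constant. Precisely this surplus vanishes at the critical exponent $\mu=3$, explaining why the logarithmic integrability proven in Theorem~\ref{thm:main} is the best one obtains in that borderline case through the present scheme.
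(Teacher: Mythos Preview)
Your argument is correct for $2\leq\mu<3$ and essentially matches the paper there, but it breaks down for $1<\mu<2$. Two obstructions arise from the regularization terms, which you dismiss too quickly with ``handled exactly as in the proof of Theorem~\ref{thm:main}''.

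First, admissibility: the integrated-by-parts inequality~\eqref{eq:ELpartint} requires $\varphi\in\lebe^{n+1}$ so that the pairing with $\partial_{m}((\nabla f_{j}-\nabla f)(\eps(v_{j})))\in\lebe^{(n+1)/n}_{\locc}$ makes sense. With $v_{j}\in\lebe^{\infty}$ this forces $(1+|\eps(v_{j})|^{2})^{(3-\mu)/2}\in\lebe^{n+1}$, i.e.\ $\eps(v_{j})\in\lebe^{(3-\mu)(n+1)}$; a priori you only have $\eps(v_{j})\in\lebe^{n+1}$, which suffices only when $3-\mu\leq 1$. The observation that $(3-\mu)/2<1$ does not help here. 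Second, and more fundamentally, term~$\mathrm{IV}$ produces (after Young's inequality) a contribution $\tfrac{c}{A_{j}j^{2}}\int\varrho^{4}(1+|\eps(v_{j})|^{2})^{\frac{n-1}{2}+(3-\mu)}\dx{x}$; this is uniformly controlled via~\eqref{v_j_estimate_n} only if the exponent is at most $\tfrac{n+1}{2}$, i.e.\ again $\mu\geq 2$. For $\mu<2$ this term is not absorbable and the estimate collapses.

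The paper resolves this by capping the exponent in the test function at $q=\min\{1,3-\mu\}$, which yields $\sobo^{1,\min\{2,4-\mu\}}_{\locc}$ in a first pass. For $\mu<2$ a second step is then carried out \emph{at the level of the limit~$u$}: one passes the weighted second order estimate to~$u$ (giving $u\in\sobo^{2,4/(2+\mu)}_{\locc}$), and now $u$ satisfies the genuine Euler--Lagrange \emph{equation} $\int\langle\nabla f(\eps(u)),\eps(\varphi)\rangle\dx{x}=0$ for all $\varphi\in\sobo^{1,1}_{c}$, in which the troublesome regularization terms $\mathrm{IV}$ and $\mathrm{VI}$ are absent. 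Testing with $\varphi=\varrho^{4}(T_{K}(1+|\eps(u)|^{2}))^{(3-\mu)/2}u$ (the truncation $T_{K}$ ensuring admissibility) and letting $K\to\infty$ by monotone convergence then gives the full $\sobo^{1,4-\mu}_{\locc}$-regularity.
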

\begin{proof}
We only comment on the required changes in the proof of Theorem~\ref{thm:main}. 
The suitable test function for the Euler-Lagrange inequality~\eqref{eq:ELpartint} is in this case given by $\varphi=\varrho^{4} (1 + |\sg(v_{j})|^{2})^{\frac{q}{2}} v_{j}$ with $q = \min\{1,3-\mu\}$. In Step~$2$, term~$\mathrm{I}$ then produces the desired expression 
\begin{equation*}
 \int_{\Omega}\varrho^{4}(1+|\sg(v_{j})|)^{\min\{2,4-\mu\}}\dx{x}
\end{equation*}
up to one term of at most linear growth, while terms~$\mathrm{II}$--$\mathrm{VI}$ are treated in analogy with the above proof (where the restriction~$3-\mu$ of the exponent in~$\varphi$  is needed for the estimation of term~$\mathrm{III}$ and the restriction~$1$ for the estimation of the terms~$\mathrm{IV}$ and~$\mathrm{VI}$ coming from the regularization). Passing to the limit $j \to \infty$ as above and employing the standard Korn inequality, we then arrive initially at an improved $\sobo^{1,\min\{2,4-\mu\}}_{\locc}$-regularity of the bounded local $\bd$-minimizer~$u$. For the case $\mu < 2$, this regularity result can further be improved via the following observations: using the weighted second order estimates for the sequence $(v_j)_{j \in \N}$, we infer a corresponding estimate for~$u$, which in particular implies local ${\sobo}^{2,4/(2+\mu)}$-regularity. At that stage, we may then use the standard Euler-Lagrange \emph{equation}
\begin{equation*}
 \int_\Omega \big\langle \nabla f(\eps(u)),\eps(\varphi)\big\rangle \dx{x} = 0 \qquad\text{for all } \varphi \in \sobo^{1,1}_c(\Omega;\R^n),
\end{equation*}
for the test function $\varphi=\varrho^{4} (T_K(1 + |\sg(u)|^{2}))^{\frac{3-\mu}{2}} u$ with truncation $T_K(t) = \min\{K,t\}$ for $t \geq 0$ at level~$K>0$ (needed for its admissibility). This allows to bound the expression
\begin{equation*}
 \int_{\Omega}\varrho^{4}\big(T_K(1+|\sg(u)|)\big)^{3-\mu} |\sg(u)| \dx{x}.
\end{equation*}
By monotone convergence and once again the standard Korn inequality  we then end up with the desired ${\sobo}{^{1,4-\mu}_{\locc}}$-regularity. 
\end{proof}

\begin{remark}[The borderline case $\mu=1$ and simplifications]\label{rem:LlogL1}
If one admits $\mu=1$ as the \emph{lower} growth exponent in~\eqref{eq:MuEllipticity}, one may integrate twice to see that integrands $f$ with this property are at least of $\lebe\log\lebe$-growth, but not of linear growth. In particular, this leads to the variant 
\begin{align}\label{eq:LlogLMuEllipticity}
\lambda \frac{|\xi|^{2}}{(1+|z|^{2})^{\frac{1}{2}}} \leq \langle \nabla^{2}f(z)\xi,\xi\rangle \leq \Lambda \frac{\mathrm{L}(|z|)}{(1+|z|^{2})^{\frac{1}{2}}}|\xi|^{2}\qquad\text{for all}\;z,\xi\in\rsym, 
\end{align}
where $\mathrm{L}\colon \R_{\geq 0}\to\R_{\geq 0}$ is monotonously increasing with 
\begin{align}\label{eq:LlogLgrowthmu=1}
\liminf_{t\to\infty}\frac{\mathrm{L}(t)}{t\log(1+t)} \in (0,\infty].
\end{align}
Growth conditions of this type have recently been considered by \textsc{De Filippis} et al. \cite{DeFilippisDeFilippisPiccinini1,DeFilippisMingione,DeFilippisPiccinini2}. The smallest choice of $\mathrm{L}$ with~\eqref{eq:LlogLgrowthmu=1} is $\mathrm{L}(t)=\psi(t)\coloneqq \ell t\log(1+t)$ with $\ell>0$. In the situation of variational problems~\eqref{eq:varprin1} over Dirichlet subclasses of Orlicz--Sobolev-type space
\begin{align*}
\mathrm{E}^{\psi}(\Omega)\coloneqq \{u\in\lebe\log\lebe(\Omega;\R^{n})\colon\; \sg(u)\in\lebe\log\lebe(\Omega;\rsym)\}, 
\end{align*}
the Korn-type inequality from Lemma~\ref{lem:Korn} on open and bounded sets $\Omega\subset\R^{n}$ with Lipschitz boundary yields that minimizers exist in $\sobo^{1,1}(\Omega;\R^{n})$. The slightly improved coercivity embodied by the additional $\log$-term in~\eqref{eq:LlogLMuEllipticity} then moreover implies that minimizers are unique. Problems of this sort have been studied in \cite{FrehseSeregin,FuchsSereginLog,Fuchs1999VariationalMF,FuchsSeregin} in the context of Prandtl-Eyring fluids and materials with logarithmic hardening. In the case where such materials are confined to a bounded spatial region, whereby one is in the situation of $\lebe^{\infty}$-constrained minimizers as considered in the present paper, the a priori $\sobo^{1,1}(\Omega;\R^{n})$-regularity of minimizers implies that neither the involved algebraic manipulations from Section~\ref{subsection:second_order_uniform} nor the use of Ekeland's variational principle are then necessary. Specifically, an adaptation of the arguments of the proof of Theorem \ref{thm:improreg} yields that locally bounded minimizers are of class  $\sobo_{\locc}^{1,3}(\Omega;\R^{n})$.
\end{remark}
\section{Appendix}\label{sec:appendix}

In this appendix we briefly address the extension results and the construction of auxiliary maps as used in the main part. To keep our exposition at a reasonable length, we focus on the key points throughout.

\subsection{Proof of Lemma~\ref{lem:ext}} \label{subsec:construction_extension}
To keep our presentation self-contained, we briefly address the linear extension operator from Lemma~\ref{lem:ext}. This operator is a variant of \textsc{Jones}' extension operator from~\cite{Jones} and has been employed first in~\cite{GmeinederRaita2} in the context of $\mathbb{C}$-elliptic differential operators, cf.~\cite{BDG}. We note that a (non-linear) operator with similar properties can be constructed by using that $\bd(\Omega)$ and $\sobo^{1,1}(\Omega)$ have the same trace spaces along $\partial\Omega$. However, we would then need to carefully discuss the construction of its right-inverse to obtain the corresponding weaker, yet sufficient version of Lemma~\ref{lem:ext}. 

We start by recording an estimate for projections: Given a non-degenerate cube $Q\subset\R^{n}$ and $v\in\lebe^{1}(Q;\R^{n})$, we denote by $\mathbb{P}_{Q}v$ the projection from Section~\ref{sec:bd} on~$Q$ onto $\mathscr{R}(\R^{n})$. Since $\mathscr{R}(\R^{n})$ is finite dimensional and hence the $\lebe^{1}$- and $\lebe^{\infty}$-norms on~$Q$ are equivalent, for every $v \in \lebe^{1}(Q;\R^{n})$ there holds 
\begin{align}\label{eq:inverse}
\|\mathbb{P}_{Q}v\|_{\lebe^{\infty}(Q;\R^{n})}\leq c \dashint_{Q}|\mathbb{P}_{Q}v|\,\dif x \leq c \dashint_{Q}|v|\,\dif x
\end{align}
for some constant $c>0$ depending only on~$n$.

Let $\Omega \subset \R^n$ be an open and bounded set with Lipschitz boundary. In order to construct the linear extension operator $\mathfrak{J}$, we pick a dyadic Whitney decomposition of~$\Omega$, i.e., a countable family~$\mathcal{W}_{1}$ of open, dyadic cubes~$Q$ (whose length is denoted by $\ell(Q)$) such that 
\begin{enumerate}[label={(W\arabic*)}]
\item\label{item:Whitney1} $\cup_{Q\in\mathcal{W}_{1}}\overline{Q}=\Omega$, and the cubes from $\mathcal{W}_{1}$ are pairwise disjoint, 
\item\label{item:Whitney2} $\sqrt{n}\ell(Q)\leq \dista(Q,\partial\Omega)\leq 4\sqrt{n}\ell(Q)$ for all $Q\in\mathcal{W}_{1}$,
\item\label{item:Whitney3} for $Q,Q'\in\mathcal{W}_{1}$ with $\overline{Q}\cap\overline{Q'}\neq\emptyset$ there holds 
\begin{align*}
\frac{1}{4} \leq \frac{\ell(Q)}{\ell(Q')}\leq 4,
\end{align*}
\item\label{item:Whitney4} and for every $Q \in \mathcal{W}_{1}$ there exist at most $12^{n}$ cubes $Q'\in\mathcal{W}_{1}$ with $Q \cap Q' = \emptyset$ and $\overline{Q}\cap\overline{Q'}\neq\emptyset$.
\end{enumerate}
Analogously, we choose a dyadic Whitney decomposition~$\mathcal{W}_{2}$ of $\R^{n}\setminus\overline{\Omega}$. Next, we denote by $\mathcal{W}_{3}$ the set of all cubes $Q\in\mathcal{W}_{2}$ with $\ell(Q)\leq \frac{3\diam(\Omega)}{16n}$. Most importantly, for each $Q\in\mathcal{W}_{3}$, there exists a \emph{reflected} cube $Q^*\in\mathcal{W}_{1}$ such that for some constant $c=c(\Omega)>0$ there hold
\begin{align}\label{eq:reflecto}
\frac{1}{c}\leq \frac{\ell(Q)}{\ell(Q^*)}\leq c \quad \text{and} \quad \dista(Q,Q^*)\leq c\ell(Q).
\end{align}

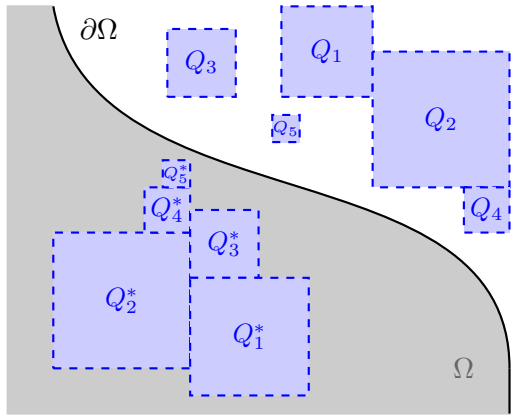
\begin{figure}[t]
\begin{center}
\begin{tikzpicture}[scale=1.2]
\draw[-,thick,black!20!white,fill=black!20!white,opacity=0.2] (-2,3) to [out = -80, in = 90] (3,-1) -- (3,-1.5) -- (-2.5,-1.5) -- (-2.5,3) -- (-2,3);
\draw[-,thick,black] (-2,3) to [out = -80, in = 90] (3,-1);
\draw[-,thick,black] (3,-1) -- (3,-1.5);
\draw[thick,blue,dashed,fill=blue!20!white,opacity=0.3] (-2,-1)--(-0.5,-1) -- (-0.5,0.5) -- (-2,0.5) -- (-2,-1);
\draw[thick,blue,dashed,fill=blue!20!white,opacity=0.3] (-0.5,0.5)--(-1,0.5) -- (-1,1) -- (-0.5,1) -- (-0.5,0.5);
\draw[thick,blue,dashed,fill=blue!20!white,opacity=0.3] (-0.5,1)--(-0.5,1.3) -- (-0.8,1.3) -- (-0.8,1) -- (-0.5,1);
\draw[thick,blue,dashed,fill=blue!20!white,opacity=0.3] (-0.5,0)--(0.25,0) -- (0.25,0.75) -- (-0.5,0.75)--(-0.5,0);
\draw[thick,blue,dashed,fill=blue!20!white,opacity=0.3] (-0.5,0)--(0.8,0) -- (0.8,-1.3) -- (-0.5,-1.3) -- (-0.5,0);
\node[black!60!white] at (2.5,-1) {{\large $\Omega$}};
\node[blue] at (0.15,-0.65) {$Q^*_{1}$};
\node[blue] at (-1.25,-0.25) {$Q^*_{2}$};
\node[blue] at (-0.125,0.375) { $Q^*_{3}$};
\node[blue] at (-0.75,0.75) {$Q^*_{4}$};
\node[blue] at (-0.65,1.15) {\tiny $Q^*_{5}$};

\node[blue] at (1,2.5) {$Q_{1}$};
\node[blue] at (2.25,1.75) {$Q_{2}$};
\node[blue] at (-0.375,2.375) { $Q_{3}$};
\node[blue] at (2.75,0.75) {$Q_{4}$};
\node[blue] at (0.55,1.65) {\tiny $Q_{5}$};
\draw[thick,blue,dashed,fill=blue!20!white,opacity=0.3] (-0.75,2)--(0,2) -- (0,2.75) -- (-0.75,2.75)--(-0.75,2.75) -- (-0.75,2);
\draw[thick,blue,dashed,fill=blue!20!white,opacity=0.3] (0.5,2)--(1.5,2) -- (1.5,3) -- (0.5,3) -- (0.5,2);
\draw[thick,blue,dashed,fill=blue!20!white,opacity=0.3] (1.5,2.5)--(1.5,1) -- (3,1) -- (3,2.5) -- (1.5,2.5);
\draw[thick,blue,dashed,fill=blue!20!white,opacity=0.3] (2.5,1) -- (3,1) -- (3,0.5) -- (2.5,0.5) -- (2.5,1);
\draw[thick,blue,dashed,fill=blue!20!white,opacity=0.3] (0.4,1.5)--(0.7,1.5) -- (0.7,1.8) -- (0.7,1.8) -- (0.4,1.8) -- (0.4,1.5);
\node[black] at (-1.5,2.75) {{\large $\partial\Omega$}};
\node[blue] at (1,2.5) {$Q_{1}$};
\node[blue] at (2.25,1.75) {$Q_{2}$};
\node[blue] at (-0.375,2.375) { $Q_{3}$};
\node[blue] at (2.75,0.75) {$Q_{4}$};
\node[blue] at (0.55,1.65) {\tiny $Q_{5}$};
\end{tikzpicture}
\end{center}
\caption{Based on a one-to-one correspondence between the cubes of $\mathcal{W}_{1}$ in $\Omega$ and the cubes of $\mathcal{W}_{3}$ in $\R^n \setminus \overline{\Omega}$, the requisite extension operator $\mathfrak{J}$ from Lemma~\ref{lem:ext} is constructed via a reflection principle indicated above in the picture.}
\end{figure}

All these properties can be traced back to~\cite{Jones}, also see \cite[Prop.~8.5.3, Lem.~8.5.4]{DHHR} for a modern treatment. We may then choose $\theta>1$ sufficiently close to $1$ such that the family $(\theta Q)_{Q \in \mathcal{W}_{3}}$ (with $\theta Q$ having the same center as $Q$ and $\ell(\theta Q) = \theta \ell (Q)$) satisfies $\theta Q\subset\R^{n}\setminus\overline{\Omega}$ for all $Q\in\mathcal{W}_{3}$ and yields a locally uniformly finite cover of $\cup_{Q\in\mathcal{W}_{3}}Q$ with \ref{item:Whitney2}--\ref{item:Whitney4} and~\eqref{eq:reflecto} still being in action, possibly with worse constants. We take a partition of unity~$(\varphi_{\theta Q})$ in $\hold_{c}^{\infty}(\R^{n}\setminus\overline{\Omega};[0,1])$  subordinate to the covering $(\theta Q)_{Q\in\mathcal{W}_{3}}$. For  $u\in\bd(\Omega)$ we then set 
\begin{align}\label{eq:Jonesdef}
\overline{\mathfrak{J}}u \coloneqq  \begin{cases} u&\;\text{in}\;\Omega,\\
\sum_{Q\in\mathcal{W}_{3}}\varphi_{\theta Q}\mathbb{P}_{Q^*}u&\;\text{in}\;\R^{n}\setminus\overline{\Omega}. 
\end{cases}
\end{align}
Under mild (and natural) growth assumptions on the derivatives of the functions in $(\varphi_{\theta Q})$ it is established in \cite[Sec.~4.1]{GmeinederRaita2} that $\overline{\mathfrak{J}}\colon\ld(\Omega)\to\ld(\R^{n})$ is a bounded linear operator, and the $\bd$-case follows by analogous arguments. It remains to give the argument for the $\lebe^{\infty}$-stability asserted in Lemma~\ref{lem:ext}\ref{item:ext2}. To this end, we fix a cube $Q\in\mathcal{W}_{3}$ and obtain for $x\in\R^{n}\setminus\overline{\Omega}$:
\begin{align*}
|(\varphi_{\theta Q}\mathbb{P}_{Q^*}u)(x)| & \leq \|\mathbb{P}_{Q^*}u\|_{\lebe^{\infty}(\theta Q;\R^{n})} \leq  c\|\mathbb{P}_{Q^*}u\|_{\lebe^{\infty}(Q;\R^{n})} \leq c\|\mathbb{P}_{Q^*}u\|_{\lebe^{\infty}(Q^*;\R^{n})} 
\end{align*}
for a constant $c=c(n,\Omega,\theta)>0$. Here, the second inequality is a consequence of the equivalence of all norms on a finite dimensional space with scaling, whereas the third inequality again follows from this equivalence together with~\eqref{eq:reflecto}. In combination with~\eqref{eq:inverse}, we then arrive at
\begin{align*}
|(\varphi_{\theta Q}\mathbb{P}_{Q^*}u)(x)| \leq \|\mathbb{P}_{Q^*}u\|_{\lebe^{\infty}(Q^*;\R^{n})} \leq c \dashint_{Q^*}|u|\,\dif x \leq c\|u\|_{\lebe^{\infty}(\Omega;\R^{n})}, 
\end{align*}
for a constant $c=c(n,\Omega,\theta)>0$. Since the family $(\theta Q)_{Q \in \mathcal{W}_{3}}$ of blown-up Whitney cubes still satisfies~\ref{item:Whitney4} with worse constants, the number of overlapping cubes $\theta Q$ (with $Q\in\mathcal{W}_{3}$) is uniformly bounded by a constant $c=c(n,\theta)>0$. Consequently, $\overline{\mathfrak{J}}\colon\lebe^{\infty}(\Omega;\R^{n})\to\lebe^{\infty}(\R^{n};\R^{n})$ is a bounded linear operator too. 

Finally, if $\Omega_0\subset\R^{n}$ is an open and bounded set with $\Omega\Subset\Omega_0$, we choose a cut-off function $\eta\in\hold_{c}^{\infty}(\Omega_0;[0,1])$ with $\mathbbm{1}_{\Omega}\leq \eta\leq\mathbbm{1}_{\Omega_0}$ and define $\mathfrak{J}u\coloneqq \eta\overline{\mathfrak{J}}u$. It is then straightforward to see that the operator $\mathfrak{J} \colon \bd(\Omega)\to\bd(\R^{n})$ has all the properties claimed in Lemma~\ref{lem:ext}. \qed

\subsection{Construction of $\Upsilon$ and $h$}\label{sec:Ups}
We briefly discuss the elementary construction of the auxiliary functions~$\Upsilon$ and~$h$ which entered the proof of existence of the Ekeland-type approximation sequence in Section~\ref{section:viscosity_approximation}, cf.~\eqref{eq:introUpsilon} and~\eqref{eq:subtlehconstruction}.

  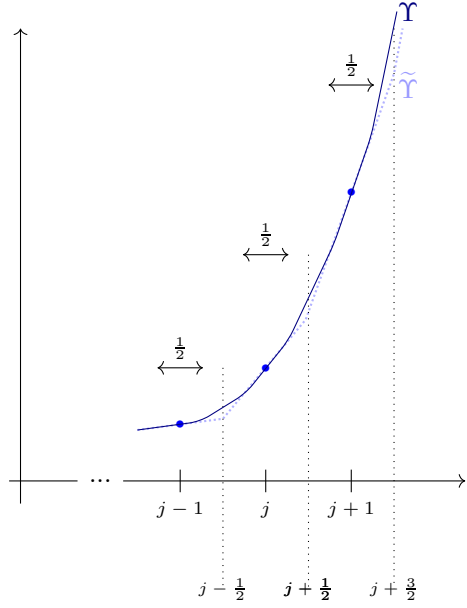
\begin{figure}[t]
	
	\begin{tikzpicture}[scale=1.5]
		\draw[-] (0.5, 0) -- (1.1,0);
		\draw[->] (0.6,-0.2) -- (0.6,4); 
		\node at (1.3,0) {...};
		\draw[->] (1.5, 0) -- (4.5,0);
		\node[below] at (2,-0.1) {\SMALL $j-1$};
		\draw[-] (2,-0.1) -- (2,0.1);
		\draw[-] (2.75,-0.1) -- (2.75,0.1);
		\node[below] at (2.75,-0.1) {\SMALL $j$};
		\draw[-] (3.5,-0.1) -- (3.5,0.1);
		\node[below] at (3.5,-0.1) {\SMALL $j+1$};
		\draw[-,blue!30!white,densely dotted,thick] (1.625,0.45) -- (2.375,0.55) -- (3.1,1.425) -- (3.875,3.6) -- (3.95,4);
		\draw[dotted] (3.125,2) -- (3.125,-0.95);
		\node[below] at (3.125,-0.775) {\tiny $j+\frac{1}{2}$};
		\draw[dotted] (2.375,1) -- (2.375,-0.95);
		\node[below] at (3.125,-0.775) {\tiny $j-\frac{1}{2}$};
		\draw[dotted] (3.875,4) -- (3.875,-0.95);
		\node[below] at (3.875,-0.775) {\tiny $j+\frac{3}{2}$};
		\node[below] at (2.375,-0.775) {\tiny $j-\frac{1}{2}$};
		\node[blue] at (2,0.5) {\tiny \textbullet};
		\node[blue] at (2.75,1) {\tiny \textbullet};
		\node[blue] at (3.5,2.55) {\tiny \textbullet};
		\draw[-,blue!50!black, rounded corners] (1.625,0.45) -- (2.175,0.525) -- (2.575,0.775) -- (2.95,1.245) -- (3.35,2.1) -- (3.675, 3.05) -- (3.9,4.15) ;
		\draw[<->] (1.8,1) -- (2.2,1);
		\node[above] at (2,1) {\tiny$\frac{1}{2}$};
		\draw[<->] (2.55,2) -- (2.95,2);
		\node[above] at (2.75,2) {\tiny$\frac{1}{2}$};
		\draw[<->] (3.3,3.5) -- (3.7,3.5);
		\node[above] at (3.5,3.5) {\tiny$\frac{1}{2}$};   
		\node[blue!50!black] at (4,4.15) {$\Upsilon$};
		\node[blue!40!white] at (4,3.5) {$\widetilde{\Upsilon}$};
	\end{tikzpicture}
	\caption{Construction of $\Upsilon$. As indicated in the figure, one first passes to the affine-linear interpolation of $\widetilde{\Upsilon}$ at the points $j+\frac{1}{2}$, $j\in\mathbb{N}$ (light blue dotted line). Mollification at radius $\varepsilon=\frac{1}{4}$ then leaves  $\widetilde{\Upsilon}$ unchanged in the $\frac{1}{2}$-neighbourhoods of all points $j\in\mathbb{N}$, and yields the desired function $\Upsilon$ (dark blue line).}\label{fig:mollification1}
\end{figure}
Concerning the function~$\Upsilon$, we set $\widetilde{\Upsilon}(0) = 0$ and 
\begin{equation*}
\widetilde{\Upsilon}(j) \coloneqq \sum_{\ell = 1}^j (j+1-\ell) \|\eps(\widetilde{u}_{j})\|_{\L^{n+1}(\Omega;\rsym)} \quad \text{for } j \in \N.
\end{equation*}
By construction, $\widetilde{\Upsilon}$ is an increasing function on~$\N_0$, and the differences 
\begin{equation*}
 \widetilde{\Upsilon}(j) - \widetilde{\Upsilon}(j-1) = \sum_{\ell = 1}^j \|\eps(\widetilde{u}_{j})\|_{\L^{n+1}(\Omega;\rsym)}\quad \text{for } j \in \N 
\end{equation*}
are increasing as well. By use of an affine-linear interpolation of $\widetilde{\Upsilon}$ as indicated in Figure \ref{fig:mollification1}, we may then extend $\widetilde{\Upsilon}$ to the entire $\R_{\geq 0}$. Setting  $\Upsilon:=\rho*\widetilde{\Upsilon}$ with a standard mollifier of radius $\frac{1}{4}$ leaves $\widetilde{\Upsilon}$ unchanged on intervals of length $\frac{1}{2}$ around each $j\in\mathbb{N}$. Moreover, $\Upsilon$ is convex, increasing and of class $\hold^{\infty}$ on $\R_{\geq 0}$ with $\Upsilon(j)\geq \|\sg(\widetilde{u}_{j})\|_{\lebe^{n+1}(\Omega;\rsym)}$ for each $j\in\mathbb{N}$.

Concerning the function~$h$, we start by writing the left-hand side of~\eqref{eq:subtlehconstruction} as $\Phi(t)$ for $t \in [\tfrac{3}{2},2)$. Based on the observation that the resulting function $\Phi\colon[\tfrac{3}{2},2)\to(0,\infty)$ is given in terms of concatenations and products of non-negative, non-decreasing and convex $\hold^2$-functions, it is not difficult to see that~$\Phi$ itself is a non-negative, (even strictly) increasing and convex $\hold^2$-function. We then define the desired function~$h$ on $[0,1]$ as $h \equiv 0$ and on~$[\tfrac{3}{2},2)$ as 
\begin{equation*}
 h(t) \coloneqq \beta \Phi(t) +(1-\beta)\Phi(\tfrac{3}{2}) \geq \Phi(t) \quad \text{for } t \in [\tfrac{3}{2},2)
\end{equation*}
for some constant $\beta > 1$ with $\beta\Phi'(\frac{3}{2})>2\Phi(\frac{3}{2})$. Based on this choice, we then may perform a similar extension as above to obtain an increasing, convex $\hold^2$-function on $[0,2)$ with the requisite properties.

\subsection*{Acknowledgments}
{\small F.G. acknowledges financial support through the Hector foundation (Project Number FP
626/21).}


\bibliographystyle{alpha}		
\bibliography{symgradnew}

\end{document}